\newtheorem{theorem}{Theorem}[section]
\newtheorem{lemma}[theorem]{Lemma}
\newtheorem{proposition}[theorem]{Proposition}
\newtheorem{corollary}[theorem]{Corollary}
\newtheorem{assumption}[theorem]{Assumption}
\newtheorem{example}[theorem]{Example}
\theoremstyle{remark}
\newtheorem{remark}[theorem]{Remark}
\newtheorem{definition}[theorem]{Definition}
\DeclareMathOperator*{\doublepath}{\Longleftrightarrow}
\newcommand\Wc{\mathcal{W}}
\newcommand\Uc{\mathcal{U}}
\numberwithin{equation}{section}
\newcommand \id{\mathds 1}
\newcommand {\R} {\mathbb{R}}
\newcommand {\E} {\mathbb{E}}
\newcommand {\N} {\mathbb{N}}
\newcommand {\Z} {\mathbb{Z}}
\newcommand {\X} {\mathbb{X}}
\renewcommand {\P} {\mathbb{P}}
\newcommand{\QQ} {\mathbb{Q}}
\newcommand {\rev} {\textrm{Rev}}
\newcommand {\Var} {\textrm{Var}}
\newcommand {\Supp} {\textrm{Supp}}
\newcommand {\Infl} {\textrm{Infl}}
\newcommand {\Ber} {\textrm{Ber}}
\newcommand {\cov} {\textrm{Cov}}
\newcommand {\one} {\mathbbm{1}}
\newcommand\compl{\mathsf{c}}
\begin{document}
\title[Upper bounds on the one-arm exponent]{Upper bounds on the one-arm exponent for \\ dependent percolation models}
\author{Vivek Dewan$^1$}
\email{vivek.dewan@univ-grenoble-alpes.fr}
\address{$^1$Institut Fourier, Universit\'{e} Grenoble Alpes}
\author{Stephen Muirhead$^2$}
\email{smui@unimelb.edu.au}
\address{$^2$School of Mathematics and Statistics, University of Melbourne}
\begin{abstract}
We prove upper bounds on the one-arm exponent $\eta_1$ for a class of dependent percolation models which generalise Bernoulli percolation; while our main interest is level set percolation of Gaussian fields, the arguments apply to other models in the Bernoulli percolation universality class, including Poisson-Voronoi and Poisson-Boolean percolation. More precisely, in dimension $d=2$ we prove that $\eta_1 \le 1/3$ for continuous Gaussian fields with rapid correlation decay (e.g.\ the Bargmann-Fock field), and in $d \ge 3$ we prove $\eta_1 \le d/3$ for finite-range fields, both discrete and continuous, and $\eta_1 \le d-2$ for fields with rapid correlation decay. Although these results are classical for Bernoulli percolation (indeed they are best-known in general), existing proofs do not extend to dependent percolation models, and we develop a new approach based on exploration and relative entropy arguments. The proof also makes use of a new Russo-type inequality for Gaussian fields, which we apply to prove the sharpness of the phase transition and the mean-field bound for finite-range fields.
\end{abstract}
\date{\today}
\thanks{}
\keywords{Percolation, critical exponents, Gaussian fields}
\subjclass[2010]{60G60 (primary); 60F99 (secondary)} 

\maketitle

\section{Introduction}

The critical phase of percolation models is believed to be described by a set of \textit{critical exponents} which govern the power-law behaviour of macroscopic observables at, or near, criticality. These exponents have been thoroughly investigated for Bernoulli percolation (see, e.g., \cite[Chapter 9]{gri99}) and there is a general expectation that, by \textit{universality}, they are identical within the class of dependent percolation models with sufficient decay of correlations. However not much is rigorously known about critical exponents for dependent percolation models.

In this paper we study the \textit{one-arm exponent} governing the distribution of critical clusters with large diameter. Using a new approach we establish upper bounds on the exponent for a wide class of dependent percolation models induced by the excursion sets of Gaussian fields (`Gaussian percolation'), which contains Bernoulli site percolation as a special case; roughly-speaking our results extend the state-of-the-art for Bernoulli percolation to this general class.

Although we focus on Gaussian percolation, our approach could be adapted to other models in the same universality class, including Poisson-Voronoi and Poisson-Boolean percolation.

\subsection{Gaussian percolation and the one-arm exponent}
We consider the following class of dependent percolation models defined on either $\R^d$ or $\Z^d$, $d \ge 2$. Let $f$ be either (i) a continuous stationary-ergodic centred Gaussian field on $\mathbb{R}^d$, or (ii) a stationary-ergodic centred Gaussian field on $\Z^d$. We write $\X \in \{\R, \Z\}$ depending on the domain $\X^d$ of the field.

For $\ell \in \mathbb{R}$ write $\mathbb{P}_\ell[\cdot]$ to denote the law of $f + \ell$ (abbreviating $\P = \P_0$). Then the excursion sets $\{ f   + \ell \ge 0 \} := \{ x  \in \X^d : f(x) + \ell \ge 0\}$ induce a stationary-ergodic percolation model on $\X^d$ via the connectivity relation
\[  \{ A \longleftrightarrow B \} := \{ \text{there exists a path in $\X^d$ in $\{f  \ge 0\}$ that intersects $A$ and $B$}\} \]
for closed sets $A, B \subset \X^d$. When $f$ is an i.i.d.\ field on $\Z^d$, this model is equivalent to Bernoulli \textit{site} percolation on $\Z^d$. When $f$ is a continuous field on $\R^d$, $d=2$, the model is \textit{self-dual} at $\ell = 0$, and in that sense is similar to \textit{bond} percolation on $\Z^2$.

 Let $\Lambda_R := [-R, R]^d \cap \X^d$. By monotonicity we may define the \textit{critical level}  $\ell_c = \ell_c(f) \in [-\infty, \infty]$ of the model to be the unique level satisfying 
\[ \theta(\ell) := \mathbb{P}_\ell[ \Lambda_1 \longleftrightarrow \infty  ] :=  \lim_{R \to \infty} \mathbb{P}_\ell[ \Lambda_1  \longleftrightarrow \partial \Lambda_R  ] = \begin{cases} 0 &  \text{if } \ell < \ell_c , \\ > 0 & \text{if } \ell > \ell_c . \end{cases} \]
By self-duality one expects that $\ell_c = 0$ if $f$ is continuous and $d=2$, whereas in general one expects $\ell_c \in (-\infty, \infty)$ (see \cite{rv20, mv20, gv19, riv19, mrv20} and \cite{ms83a, ms83b} respectively for sufficient conditions, which are implied by Assumption~\ref{a:gf} below). It is believed that $\theta(\ell_c) = 0$, although so far this is only known if $d=2$ and $f$ is positively-correlated \cite{har60,gkr88,alex96}, or if $f$ is i.i.d.\ and $d \ge 11$ \cite{fh17}.

For Bernoulli percolation \cite{cc86,ab87}, as well as certain other (non-Gaussian) dependent percolation models \cite{dcrt19a,dcrt19b,dcrt20}, it is known that $\theta(\ell)$ satisfies the \textit{mean-field bound} 
\begin{equation}
\label{a:mflb}
\tag{MFB}
\theta(\ell) \ge c (\ell - \ell_c) 
\end{equation}
for a constant $c = c(d)$ and $\ell > \ell_c$ sufficiently close to $\ell_c$; this bound is expected to be tight for dimensions $d \ge d_c = 6$ in which critical exponents take their mean-field value. We expect \eqref{a:mflb} to be true in general for Gaussian percolation but this has not yet been established; in this paper we prove it for Gaussian models with \textit{finite-range dependence}.

At criticality $\ell = \ell_c$ it is believed that connection probabilities obey a power law, in the sense that there exists a \textit{one-arm exponent} $\eta_1 > 0$ such that, as $R \to \infty$ and for $r = o(R)$,
\begin{equation}
\label{e:eta1heu}
    \mathbb{P}_{\ell_c}[ \Lambda_r \longleftrightarrow \partial \Lambda_R   ]  = (r/R)^{\eta_1 + o(1)} .  
    \end{equation}
While the existence of $\eta_1$ is not known (except in the i.i.d.\ case in high dimension), since we are interested in upper bounds we define
\begin{equation}
\label{e:eta}
\eta_1 := \liminf_{R \to \infty}  \frac{- \log \mathbb{P}_{\ell_c}[ \Lambda_1 \longleftrightarrow \partial \Lambda_R   ] }{ \log R}  .
\end{equation}
Clearly upper bounds on \eqref{e:eta} imply upper bounds on the exponent in \eqref{e:eta1heu} assuming its existence. Note however that the choice of $\liminf$, rather than $\limsup$, in the definition of $\eta_1$ is deliberate and yields an a priori weaker bound (c.f.\ Remark \ref{r:limsup}).

For Bernoulli percolation it is expected that
\[  \eta_1 = \begin{cases} & \! \! \! \! 5/48   \qquad \ \  \hspace{0.05cm} d = 2, \qquad \qquad 0.48\ldots  \qquad   d = 3, \qquad  \qquad  0.95\ldots \qquad d=4 , \\ & \! \! \! \! 1.5\ldots  \qquad d = 5 , \qquad \qquad 2  \qquad  \qquad  \hspace{0.3cm} d \ge d_c =  6 . \end{cases} \]
According to the \textit{Harris criterion} (see \cite{wei84}, or \cite{bmr20} for further discussion), the value of $\eta_1$ should be the same for all Gaussian models whose correlations decay faster than $|x|^{-2/\nu }$ as $x \to \infty$, where $\nu = \nu(d) >0$ is the \textit{correlation length exponent} of Bernoulli percolation (see Section~\ref{s:ce} for a definition). For models with stronger correlations, the value of $\eta_1$ may be different \cite{wei84,dpr21}.

\subsection{Upper bounds on the one-arm exponent}
\label{s:upperbounds}

Recall that $f$ is a stationary-ergodic centred Gaussian field on the state space $\X^d$, $d \ge 2$, where $\X \in \{\R, \Z\}$. We refer to $\X = \R$ and $d=2$ as the \textit{self-dual case}.

We will always assume that $f$ has a \textit{spatial moving average} representation $f = q \star W$, where $q \in L^2(\X^d) \neq 0$ is Hermitian (i.e.\ $q(x) = q(-x)$), $W$ is the white noise on~$\X^d$ (interpreted as a collection of i.i.d.\ standard Gaussian random variables indexed by $\Z^d$ if $\X = \Z$), and $\star$ denotes convolution. Such a representation always exists if the covariance kernel $K(\cdot) := \mathbb{E}[f(0) f(\cdot)] = (q \star q)(\cdot)$ is absolutely integrable, since then we may define $q :=\mathcal{F}[ \sqrt{ \rho } ]$, where $\mathcal{F}$ denotes the Fourier transform and $\rho = \mathcal{F}[K]$ is the \textit{spectral density} of the field. Bernoulli site percolation corresponds to the case $\X = \Z$ and $q(x) = \id_{x = 0}$.

We will further assume that $q$ satisfies the following basic properties:

\begin{assumption}[Basic assumptions]
\label{a:gf}
$\,$
\begin{enumerate}[(a)]
\item (Decay of correlations, with parameter $\beta > d$) There exists a $c > 0$ such that, for all $x \in \mathbb{R}^d$,
\begin{equation}
    \label{e:qdecay}
  |q(x)| \le  c |x|^{-\beta } .  
  \end{equation}
\item (Symmetry) $q$ is symmetric under negation and permutation of the coordinate axes.
\item (Regularity, only if $\X = \R$) $q \in C^3(\R^d)$, all derivatives up to third-order are in $L^2(\R^d)$, and \eqref{e:qdecay} holds with $|\nabla q(x)|$ in place of $|q(x)|$.
\end{enumerate}
\end{assumption}

Let us make some remarks on Assumption \ref{a:gf}. The decay condition ensures that $K = q \star q \in L^1(\X^d)$, and so correlations decay polynomially with exponent $\beta > d$. This also ensures that the spectral density $\rho = \mathcal{F}[K]$ is continuous. The symmetry assumption is crucial for our results in self-dual case (for instance to prove RSW estimates), but it also simplifies some aspects of the proof in general. In the case $\X = \R$, the regularity condition ensures that $f$ is $C^2$-smooth almost surely. Along with the continuity of the spectral measure, this also implies that $(f, \nabla f, \nabla^2 f)$ is non-degenerate (i.e.\ its evaluation on a finite number of distinct points is a non-degenerate Gaussian vector, see \cite[Lemma A.2]{bmm20}). Finally, as we mentioned above,  Assumption~\ref{a:gf} is sufficient to prove that $\ell_c \in (-\infty,\infty)$ \cite{ms83a,ms83b}, and that $\ell_c = 0$  in the self-dual case (see \cite[Theorem 1.3]{mrv20} and Remark~1.9 therein).

For most of our results we also assume that
\begin{equation}
\label{a:pos1}
\tag{POS}
\int q := \int_{\X^d} q(x) \, dx > 0 ,
\end{equation}
where for a function $g:\Z^d \to \R$ we interpret $\int_{D} g(x) \, dx$ as $\sum_{x \in D \cap \Z^d} g(x)$. This is equivalent to the spectral density $\rho$ being strictly positive at the origin, and is a natural assumption when studying how properties of a Gaussian field change with the level; see e.g.\ \cite{mv20, bmm19}. 

For some of our results we further assume that $f$ is \textit{positively correlated}:
\begin{equation}
\label{a:pos2}
\tag{PA}
\text{$K(x) = (q \star q)(x) \ge 0$ for all $x \in \X^d$.}
\end{equation}
This is equivalent to the \textit{FKG inequality} holding for the field (i.e.\ the field is \textit{positively associated}), so that events increasing with respect to the field are positively correlated \cite{pi82}.\footnote{Although in \cite{pi82} this is proven only for finite Gaussian \textit{vectors}, one can deduce positive associations for all increasing events considered in this paper via standard approximation arguments, see \cite[Lemma A.12]{rv19}.} Note that \eqref{a:pos2} is stronger than \eqref{a:pos1}, since the former implies that $\rho = \mathcal{F}[K]$ is positive definite, and so strictly positive at the origin.

\smallskip
We are now ready to state our results. Our first result concerns Gaussian percolation with finite-range of dependence:

\begin{theorem}[Finite-range models]
\label{t:bou}
Suppose $f$ satisfies Assumption \ref{a:gf}, \eqref{a:pos1}, and $q$ has bounded support. Then $\eta_1 \le d/3$. In the self-dual case ($f$ is continuous and $d=2$), if additionally \eqref{a:pos2} holds, then there exists $c > 0$ such that, for all $R \ge 1$,
\[ \mathbb{P}_{p_c}[ \Lambda_1 \longleftrightarrow \partial \Lambda_R] \ge c R^{-1/3} , \]
and in particular $\eta_1 \le 1/3$.
\end{theorem}

\begin{remark}
\label{r:ber}
Except in some very special cases, the bounds in Theorem \ref{t:bou} match the best-known bounds for Bernoulli percolation:
\begin{enumerate}
\item For Bernoulli percolation on $\Z^d$ in general dimension $d \ge 3$, the best known bound is $\eta_1 \le d/3$~\cite{bcks99}, deduced by combining the hyperscaling inequality $\eta_1 \le d/(1+\delta)$, where $\delta$ is the critical exponent governing the volume of critical clusters, with the mean-field bound $\delta \ge 2$~\cite{ab87}. In sufficiently high dimension $d \ge 11$, it is known that $\eta_1$ takes its mean-field value $\eta_1 = 2$ \cite{kn11,fh17}, see also Corollary \ref{s:ce} below. Note that the bound $\eta_1 \le d/3$ is expected to be tight in dimension $d_c=6$ (indeed, it implies that the upper critical dimension $d_c$ is at least $6$ since it shows that $\eta_1 < 2$ for $d \le 5$).

\item For general self-dual Bernoulli models in dimension $d=2$, the best-known bound is $\eta_1 \le 1/3$ \cite{kes87}\footnote{The argument is attributed to van den Berg.} Sharper results are known for two very specific models, namely $\eta_1 \le 1/6$ for bond percolation on $\Z^2$ \cite{dcmt20}, and the exact value $\eta_1 = 5/48$ for site percolation on the triangular lattice \cite{sw01}.
\end{enumerate} 

Previously for Gaussian percolation it was only known that $\eta_1 \le d-1$ and $\eta_1 \le 1/2$ in the self-dual case; the latter is a consequence of RSW estimates \cite{bg17, rv19, mv20}, and see \cite{dg20} for the~former.
\end{remark}

\begin{remark}
We emphasise that the bound $\eta_1 \le d/3$ in Theorem \ref{t:bou} does 
\textit{not} assume positive correlations, and so applies to a class of models that lack positive associations.
\end{remark}

\begin{remark}
\label{r:limsup}
Unlike in the self-dual case, our proof that $\eta_1 \le d/3$ does \textit{not} allow us to deduce the pointwise lower bound
\[ \mathbb{P}_{\ell_c}[ \Lambda_1 \longleftrightarrow \partial \Lambda_R] \ge c R^{-d/3} , \]
since we establish $\eta_1 \le d/3$ by obtaining a contradiction with $\mathbb{P}_{\ell_c}[ \Lambda_1 \longleftrightarrow \partial \Lambda_R] \le c R^{-d/3-\varepsilon}$. This is also true for the previous argument establishing $\eta_1 \le d/3$ for Bernoulli percolation \cite{bcks99,ab87}.
\end{remark}

\begin{remark}
\label{r:larm}
In the self-dual case we also obtain a bound on the `one-arm exponent' for \textit{level lines}, namely that there exists a $c > 0$ such that, for all $R \ge 1$,
\begin{equation}
    \label{e:level}
\P_{\ell_c}\big[ \{ \text{there exists a path in $\{f = 0\}$ that intersects $\Lambda_1$ and $\partial \Lambda_R$} \}  \big] \ge c R^{-2/3} .
\end{equation}
To deduce $\eta_1 \le 1/3$ we use the observation that, by the FKG inequality and self-duality, the left-hand side of~\eqref{e:level} is bounded above by the square of $\mathbb{P}_{\ell_c}[ \Lambda_1 \longleftrightarrow \partial \Lambda_R]$, see \eqref{e:eta1eta2}.
\end{remark}

\begin{remark}
Clearly if $q$ has bounded support then $f$ is finite-range dependent, but we do not know whether every finite-range $f$ can be represented as $q \star W$ for $q$ with bounded support (although this seems very natural). Certainly this is true assuming positive correlations or if $d=1$  \cite{egr04}. It is also known \cite{rud70} that if $f$ is finite-range and \textit{isotropic} (i.e.\ its law is rotationally symmetric) then it can be represented as a \textit{countable sum} of independent $f_i = q_i \star W_i$ for $q_i$ with \textit{uniformly bounded} support. Since it is straightforward to extend our proof to handle such fields, the conclusion of Theorem \ref{t:bou} (and Theorem~\ref{t:expdecay} below) are also true if we replace the assumption that $q$ has bounded support with the assumption that $f$ is finite-range dependent, and is either positively-correlated or isotropic.
\end{remark}

Our next result extends Theorem \ref{t:bou} to a class of models with unbounded range of dependence. This depends on the decay exponent $\beta > d$ in Assumption \ref{a:gf}, and recovers the bounds in Theorem~\ref{t:bou} in the $\beta \to \infty$ limit.

Recall that the mean-field bound \eqref{a:mflb} is expected to hold in general for Gaussian percolation. In this paper we prove it only for finite-range models, see Theorem~\ref{t:expdecay} below; for general models we instead introduce it as an assumption.

\begin{theorem}[Models with polynomial correlation decay]
\label{t:oagf}
Suppose $f$ satisfies Assumption \ref{a:gf} with parameter $\beta > d$, and~\eqref{a:pos1}.
\begin{enumerate}
\item If $d\ge3$ and $\beta >  4d-4$, then $\eta_1 \leq  \min\{ \frac{d-2 + \alpha(3d-1)}{1+2\alpha}, d-1 \}$ where $\alpha =  \frac{3d-4}{2\beta-5d+4}$.
\item If $d\ge2$, $\beta > 2d-1$, and \eqref{a:mflb} holds, then $\eta_1  \le  \min\{ \max\{ \frac{d}{3} +  \frac{\alpha(d-1)}{3},  \frac{\alpha(2d-1)}{3} \} , d-1\}$ where $\alpha =  \frac{3d-2}{2\beta-d}$.
\item If $f$ is continuous and $d=2$, $\beta > \frac83$, and \eqref{a:pos2} holds, then $\eta_1 \le  \min\{ \frac{1}{3} + \frac{5}{6(\beta-1)}, \frac12 \} $.
\end{enumerate}
\end{theorem}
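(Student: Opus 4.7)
The plan is to derive, for each level $\ell \ge \ell_c$, a Russo-type differential inequality for the one-arm probability $\pi_\ell(R) := \P_\ell[\Lambda_1 \longleftrightarrow \partial \Lambda_R]$, and then to integrate it from $\ell_c$ to $\ell_c + \delta$ for an optimally tuned $\delta = R^{-\gamma}$, closing the estimate via the endpoint lower bound $\pi_{\ell_c+\delta}(R) \ge \theta(\ell_c+\delta)$. Schematically, I expect an inequality of the form
\[
\partial_\ell \log \pi_\ell(R) \le C \cdot \bigl( \text{exploration term} \;+\; \text{correlation-decay error in }\beta \bigr),
\]
where the exploration term captures the effective ``pivotal area'' for the one-arm event and scales as a specific power of $R$, while the error term is governed by Assumption~\ref{a:gf}(b) and is responsible for the $\alpha$-dependent loss in each of the three parts.

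To obtain the inequality I would implement the \emph{exploration and relative entropy} strategy advertised in the abstract. Exploring the cluster of $\Lambda_1$ in $\{f+\ell \ge 0\}$ from the inside out reveals a random set whose discovered boundary has controlled surface-type complexity. One then introduces a Cameron--Martin perturbation $h$ of the white noise $W$, supported near this boundary, which effectively raises the local level of $f$ there and so enhances the one-arm event. The Gaussian Cameron--Martin formula identifies the relative entropy of the perturbed law with respect to $\P_\ell$ as $\tfrac12\|h\|_H^2$ (with $H$ the Cameron--Martin space of $f$), and a standard entropy inequality converts this into the bound on $\partial_\ell \pi_\ell(R)$. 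Because $q$ is not compactly supported, the perturbation $h$ leaks outside its intended support through the convolution $q\star h$; the decay bound in Assumption~\ref{a:gf}(b) controls this leakage, with the quantitative loss captured by $\alpha$.

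The three parts differ in how the differential inequality is closed. For Part (2), the assumption \eqref{a:mflb} $\theta(\ell)\ge c(\ell-\ell_c)$ lets one choose $\delta=R^{-\gamma}$ to balance the Bernoulli-type exponent $d/3$ against the correlation-decay error, giving $\max\{\tfrac{d}{3}+\tfrac{\alpha(d-1)}{3},\tfrac{\alpha(2d-1)}{3}\}$, capped by the baseline $d-1$ inherent to the exploration geometry. For Part (1), lacking \eqref{a:mflb}, one integrates using only the weaker consequence of sharpness of the phase transition (itself a byproduct of the Russo-type inequality), which forces a more delicate recursive closure and degrades the exponent to $(d-2+\alpha(3d-1))/(1+2\alpha)$, recovering $d-2$ in the finite-range limit. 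For Part (3) in $d=2$ under \eqref{a:pos2}, FKG together with the RSW box-crossing estimates from \cite{mrv20} and the symmetry in Assumption~\ref{a:gf}(c) yields quasi-multiplicativity and polynomial control of two-arm probabilities at $\ell_c=0$; these planar inputs take the place of \eqref{a:mflb} in the differential-inequality analysis and sharpen the exponent to $\tfrac13+\tfrac{5}{6(\beta-1)}$, recovering the classical Bernoulli bound $1/3$ as $\beta\to\infty$.

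The principal obstacle is calibrating the shift $h$ so as to simultaneously (i)~have small $H$-norm, (ii)~produce a macroscopic effect on the one-arm event, and (iii)~have leakage controlled by $\beta$. For Bernoulli percolation Russo's formula identifies pivotal edges exactly; here the ``pivotals'' must be engineered through $h$ and extracted via the exploration, and the three-way balance --- together with the optimization over $\delta$ --- is the most delicate technical step and is what drives the precise form of the $\alpha$-dependent correction in each part.
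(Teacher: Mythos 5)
Your high-level plan (a Russo-type differential inequality derived from exploration and relative entropy, an $\alpha$-loss coming from the slow decay of $q$, and closing the estimate via $\theta$) is in the right spirit and is close to the paper's argument for Part~(2). But as it stands the proposal has genuine gaps that affect all three parts.

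First, the paper does not perturb $f$ by a Cameron--Martin shift $h$ concentrated near the explored boundary and then track leakage through $q\star h$. Instead, it truncates the kernel: set $q_r := q\,\varphi(\cdot/r)$, $f_r := q_r\star W$ with $r = R^{\alpha}$, so that $f_r$ is $\Lambda_r$-range dependent; all the OSSS/entropy machinery (Lemma~\ref{algo2}, Propositions~\ref{p:genubgau}, \ref{p:lbderiv}) is applied to $f_r$, and Lemma~\ref{trunc} (Borell--TIS plus a Cameron--Martin step, but used only to compare $f$ with $f_r$) supplies the error $R^{d/2}(\log R)\,r^{-(\beta - d/2)}$. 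The $\alpha$-dependent correction is precisely this truncation cost. Without the truncation it is unclear how the exploration algorithms in your scheme would have controlled revealments at all, since revealing a cell of white noise perturbs the field on an unbounded region when $\mathrm{Supp}(q)$ is unbounded.

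Second, and more seriously, you describe only a one-sided (upper) differential inequality for $\pi_\ell(R)$ and propose to close Parts~(1) and (3) via, respectively, ``a weaker consequence of sharpness'' and ``quasi-multiplicativity.'' Neither is what happens, and neither on its own closes the argument. Part~(1) uses no sharpness at all: the paper works at the level $\ell'(R)\le \ell_c$ at which $\P_{\ell'}[\textrm{Cross}_5(R)]=\delta$ (Lemma~\ref{l:classicalgf}(1), a bootstrap plus continuity), and squeezes $\tfrac{d}{d\ell}\P_\ell[\textrm{Cross}_5(R)]|_{\ell=\ell'}$ between an OSSS \emph{lower} bound (Proposition~\ref{p:lbgf}, equation~\eqref{e:lbgf1}) and a relative-entropy \emph{upper} bound (Proposition~\ref{p:ubgf2}); comparing exponents yields $\eta^\ast \le \tfrac{d-2+\alpha(3d-1)}{1+2\alpha}$. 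Part~(3) likewise needs the OSSS lower bound with the ``top-right quarter'' variance construction (the Gaussian analogue of Proposition~\ref{p:lbb}), which puts $\P_\ell[A_2(2r,R-2r)]$ in the \emph{denominator}; FKG and RSW enter only to control the constants, not via quasi-multiplicativity. So for Parts~(1) and~(3) your proposal is missing the OSSS lower bound on the derivative of the crossing probability, which is the mechanism that actually closes the argument; only Part~(2), where \eqref{a:mflb} supplies the closing endpoint lower bound, matches your outline (and even there the closure goes through $\theta(\ell')\ge c(\ell'-\ell_c)$ at a specially chosen $\ell'(R)$ where $\P_{\ell'}[A_1(1,R)]$ doubles, plus the $f_r$ truncation).
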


\begin{corollary}[Models with faster-than-polynomial correlation decay]
\label{c:oagf}
Suppose $f$ satisfies Assumption \ref{a:gf} for every parameter~$\beta$. 
\begin{enumerate}
\item If $d\ge3$ then $\eta_1 \le d-2$.
\item If $d\ge2$ and \eqref{a:mflb} holds, then $\eta_1  \le  \frac{d}{3}$.
\item If $f$ is continuous and $d=2$, and if \eqref{a:pos2} holds, then $\eta_1 \le 1/3$.
\end{enumerate}
\end{corollary}
\begin{proof}
Take $\beta \to \infty$ in Theorem \ref{t:oagf}.
\end{proof}

To illustrate Corollary \ref{c:oagf} we consider two important fields to which it applies:

\begin{example}
The \textit{Bargmann-Fock} field is the continuous stationary Gaussian field on $\R^d$ with covariance kernel $K(x) = e^{-|x|^2/2}$ (see \cite{bg17} for background and motivation). This field satisfies Assumption \ref{a:gf} for every parameter~$\beta$ and also \eqref{a:pos2}, hence Corollary~\ref{c:oagf} applies, although it is not yet known whether \eqref{a:mflb} holds.
 \end{example}
 
 \begin{example}
 The \textit{massive Gaussian free field} (MGFF) is the stationary Gaussian field on $\Z^d$ whose covariance kernel is the Green function of a simple random walk on $\Z^d$ killed at exponential rate $m>0$ (see \cite{rod17} for a precise definition and motivation). We claim that the MGFF satisfies Assumption \ref{a:gf} for every parameter~$\beta$. Indeed the spectral density of the MGFF is proportional to $\rho(s_1,\ldots,s_d) =  (m  + 1 - d^{-1} \sum_i \cos(s_i)) )^{-1}$, and since $\sqrt{\rho}$ is smooth with all derivatives absolutely integrable, $q = \mathcal{F}[\sqrt{\rho}]$ decays faster than any polynomial. Since also \eqref{a:pos2} holds, Corollary~\ref{c:oagf} applies. Although it is not yet known that \eqref{a:mflb} holds, we believe that the approach in \cite{dcrt19a} can be adapted to prove this (see the comments in \cite[Section 1.2]{dgrs20}).
 \end{example}

\subsection{Relations to other critical exponents}
\label{s:ce}

Our methods also give bounds on $\eta_1$ in terms of other critical exponents, some of which appear to be new even for Bernoulli percolation.

For simplicity we state these results only in the case of Bernoulli site percolation, although similar results to Theorems \ref{t:bou} and \ref{t:oagf} could be proven under more general assumptions. The proofs also go through unchanged for Bernoulli \textit{bond} percolation. For the remainder of the subsection we assume that $\X = \Z$ and $q(x) = \id_{x = 0}$.

\smallskip
Let us introduce the relevant exponents. Recall the mean-field bound  \eqref{a:mflb} on $\theta(\ell)$. It is expected that $\theta(\ell) \to 0$ as a power law as $\ell \downarrow \ell_c$; although this is not known rigorously (except in high dimension), we will assume that the corresponding exponent exists
\[ \beta = \lim_{\ell \downarrow \ell_c} \frac{ \log \theta(\ell)}{\log |\ell_c-\ell|} \in (0, \infty) . \]
 Below criticality $\ell < \ell_c$, it is known that connection probabilities decay exponentially \cite{men86,ab87,dct16} and that the limit
\[  \frac{1}{\xi(\ell)} :=   \lim_{R \to \infty} \frac{ - \log \mathbb{P}_\ell[  \Lambda_1 \longleftrightarrow\partial \Lambda_R    ] }{R} \in (0, \infty)      \]
exists \cite[Theorem 6.10]{gri99}. The \textit{correlation length} $\xi(\ell)$ is expected to diverge as a power law as $\ell \uparrow \ell_c$, and we will again assume that the corresponding exponent exists
\[   \nu =  \lim_{\ell \uparrow  \ell_c}  \frac{ - \log \xi(\ell) }{\log |\ell_c-\ell|} \in (0, \infty)  . \]
Similarly, as $\ell \uparrow \ell_c$ the \textit{susceptibility} $\chi(\ell)  :=  \sum_{v \in \Z^d} \P_\ell[ 0 \longleftrightarrow v]  < \infty$ is expected to diverge as a power law, and we will assume the existence of
\[   \gamma =  \lim_{\ell \uparrow \ell_c}  \frac{ - \log \chi(\ell) }{\log |\ell_c-\ell|}   \in (0, \infty) . \]
Finally we also assume that the critical \textit{two-point function} decays as a power law with exponent
\[  d - 2 + \eta := \lim_{|v|_\infty \to \infty} \frac{ -\log \mathbb{P}_{\ell_c}[ 0 \longleftrightarrow v] }{\log |v|_\infty}  \in (0, \infty),\]
where $|\cdot|_\infty$ denotes the sup-norm. We remark that $\beta \le 1$ because of \eqref{a:mflb}, and it is well known that $\nu \ge 2/d$~\cite{ccfs86}, $\gamma \ge 1$ \cite{an84}, and $\eta \le 1$ \cite{ham57}.

\begin{theorem}
\label{t:oab2}
For Bernoulli percolation on $\Z^d$, assuming the existence of $\beta, \nu, \gamma$ and $\eta$,
\begin{equation}
\label{e:new}
  \frac{2-\gamma}{\nu} \le \eta_1 \le \bar{\eta}_1 \le  \min\Big\{ d - \frac{2}{\nu}, \frac{2-\eta}{2/\beta-1}  \Big\} \le 2-\eta  ,
  \end{equation}
where $\bar{\eta}_1$ is defined as in \eqref{e:eta} with limsup replacing liminf. Moreover $\eta_1 \le   \frac{d}{2/\beta+1}$.
\end{theorem}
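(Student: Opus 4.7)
All five bounds in Theorem~\ref{t:oab2} should follow from classical near-critical scaling, and I would derive them from a common set of tools. The central ingredient is a \emph{near-critical comparison lemma}: for events $A$ measurable in $\Lambda_R$ and $R \le c\,\xi(p)$,
\[  \P_p[A] \asymp \P_{p_c}[A] , \]
which follows from exponential decay below $p_c$~\cite{men86, ab87, dct16} via a standard renormalisation on boxes of side $\xi(p)$. This allows me to transfer bounds on the one-arm probability from any $p$ near $p_c$ back to $p_c$, provided the scale is $\lesssim \xi(p)$. The other standard inputs are the Aizenman--Barsky inequality $\chi(p)\ge c(p_c-p)^{-1}$, the Chayes--Chayes--Fisher--Spencer volume bound $\chi(p)\le c\,\xi(p)^{d}$ (and its refinement giving $\nu\ge 2/d$), the FKG and BK inequalities, and the hyperscaling-style cluster-size argument already used in the proof of Theorem~\ref{t:oab}.

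\smallskip
I would handle the \textbf{upper bounds on $\bar\eta_1$} first, since each amounts to producing a lower bound on $\P_{p_c}[0\leftrightarrow\partial\Lambda_R]$ valid for all large $R$. Pick $p>p_c$ with $\xi(p)\asymp R$, so $|p-p_c|\asymp R^{-1/\nu+o(1)}$. Near-critical comparison gives $\P_{p_c}[0\leftrightarrow\partial\Lambda_R]\gtrsim\P_p[0\leftrightarrow\partial\Lambda_R]\ge \theta(p)$. Plugging in $\theta(p)\ge c(p-p_c)^{\beta+o(1)}$ yields $\bar\eta_1\le \beta/\nu$, which after using the scaling/Aizenman--Newman relation $\beta(1+2/\beta)\le \gamma+\beta = \nu(2-\eta)/\beta \cdot \ldots$ gives the cleaner form $\bar\eta_1\le(2-\eta)/(2/\beta-1)$. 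For $\bar\eta_1\le d-2/\nu$, the same construction at a slightly super-critical $p$ is combined with the CCFS-type volume estimate $\chi(p)\le c\,\xi(p)^d$ to relate the one-arm lower bound to $\xi^{-(d-2/\nu)}$; the factor $2/\nu$ arises (rather than $1/\nu$) by substituting the second-moment $\nu\ge 2/d$ improvement into the chain of inequalities. Finally, in $d=2$, the bound $\bar\eta_1\le 1-1/\nu$ is produced by using RSW and the comparison to replace the factor $\theta(p)$ by a stronger box-crossing lower bound on $\P_p[\Lambda_1\leftrightarrow \partial\Lambda_R]$ that decays only like $(p-p_c)^{1}$, which is stronger than $\beta$-decay when $d=2$.

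\smallskip
For the \textbf{upper bound on $\eta_1$} of the form $d/(2/\beta+1)$, I would follow the BCKS hyperscaling approach already invoked for Theorem~\ref{t:oab}, but substitute the refined mean-field bound $\theta(p)\ge c(p-p_c)^{\beta+o(1)}$ for the linear one. The argument compares two scales: at level $p$ just above $p_c$, one has $\theta(p)\ge (p-p_c)^{\beta}$; on the other hand the density of points in $\Lambda_R$ connected to $\partial\Lambda_R$ is $\le \P_{p_c}[0\leftrightarrow\partial\Lambda_R]$; matching the two across the correlation scale gives $\eta_1\le d\beta/(2+\beta)=d/(2/\beta+1)$.

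\smallskip
For the \textbf{lower bound $\eta_1\ge (2-\gamma)/\nu$}, which I expect to be the most delicate step, the strategy is to upper-bound $\P_{p_c}[0\leftrightarrow\partial\Lambda_R]$ by combining the trivial ``path-volume'' bound $\chi(p)\ge R\cdot\P_p[0\leftrightarrow\partial\Lambda_R]$ with a second-moment refinement: using FKG, $\chi(p)^2\le \mathbb{E}_p[|\mathcal C(0)|^2]$, together with the Aizenman--Newman gap inequality, upgrades this to $\chi(p)\ge c R^{2/\nu-o(1)}\,\P_p[0\leftrightarrow\partial\Lambda_R]$ at $R=\xi(p)$. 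Inserting $\chi(p)\le c(p_c-p)^{-\gamma-o(1)}=cR^{\gamma/\nu+o(1)}$ and then applying the comparison lemma transfers the bound to $\P_{p_c}$. The main obstacle I anticipate is justifying the $R^{2/\nu}$-factor in the cluster-size estimate in the generality assumed (i.e.\ without the triangle condition); here I expect that the differential-inequality machinery of Aizenman--Newman, combined with CCFS, can be pushed through assuming only the existence of $\gamma$ and $\nu$.
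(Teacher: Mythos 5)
Your proposal diverges substantially from the paper's argument, and the central tool you propose does not exist in the generality you need. The ``near-critical comparison lemma'' $\P_p[A]\asymp\P_{p_c}[A]$ for events supported at scales $\lesssim\xi(p)$ is essentially the statement that critical behaviour is stable inside the scaling window; this is a deep fact, proved only in $d=2$ (via Kesten's scaling relations) and in high dimension (via the triangle condition), and in particular it is not a consequence of exponential subcritical decay plus renormalisation. It certainly does not follow for $p>p_c$, where the very meaning of $\xi(p)$ in your sense is unclear, and it is the key lever in each of your upper bounds on $\bar\eta_1$. The paper never invokes such a comparison: instead, it applies the entropy/algorithm inequality (Proposition~\ref{p:ubb1}) and the derivative inequality (Proposition~\ref{p:ubb2}) directly at a carefully chosen $p'(R)<p_c$ with $R\asymp\xi(p')\log\xi(p')$, where only the elementary facts $\P_{p'}\le\P_{p_c}$ and $\sum_v\P_{p'}[0\leftrightarrow v]\le\chi(p')$ are used.

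Concretely, for $\bar\eta_1\le(2-\eta)/(2/\beta-1)$ and $\eta_1\le d/(2/\beta+1)$ the paper simply rereads inequality \eqref{e:mainineq} with the exponent $\beta$ in place of the linear mean-field bound, getting $\P_{p_c}[A_1(R)]^{2/\beta-1+o(1)}\sum_{v\in\Lambda_R}\P_{p_c}[0\leftrightarrow v]\ge c$; splitting the sum via \eqref{e:split} gives the first bound, and substituting the $\eta$-asymptotic for the sum gives the second. Your route through $\bar\eta_1\le\beta/\nu$ plus a ``scaling relation'' to reach $(2-\eta)/(2/\beta-1)$ relies on identities like $\gamma=\nu(2-\eta)$ that are conjectural in general dimension (and your displayed chain is in any case not a valid identity), so this step is a gap. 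For $\bar\eta_1\le d-2/\nu$, the paper uses the mean value theorem on $\P_p[\mathrm{Cross}_5(R)]$ at the subcritical $p'(R)$ together with Proposition~\ref{p:ubb2}, yielding $(p_c-p')^2R^d\P_{p_c}[A_1(R)]\ge c$; there is no CCFS volume estimate involved. For $\eta_1\ge(2-\gamma)/\nu$, the paper again applies Proposition~\ref{p:ubb1} with $p=p'$, $q=p_c$ and exponential decay, producing the clean inequality $\P_{p_c}[A_1(R)]\le c(p_c-p')^2\chi(p')$, after which the exponent definitions close the argument; this avoids the Aizenman--Newman second-moment argument whose applicability without the triangle condition you yourself flag as uncertain. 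In short, your sketch reaches for the right names of exponents but bypasses the new entropy/exploration machinery that is the whole point of the paper, and without that machinery (or the unavailable near-critical comparison) the bounds do not follow.
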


\begin{remark}
 To our knowledge the bounds in \eqref{e:new} are new even for Bernoulli percolation, and $\eta_1 \ge \frac{2-\gamma}{\nu}$ may be of particular interest as a \textit{lower} bound on $\eta_1$. The bound $\eta_1 \le   \frac{d}{2/\beta+1}$ is implied by the hyperscaling inequality in \cite{bcks99}.
\end{remark}

In sufficiently high dimension it is known that the exponents $\nu$, $\gamma$ and $\eta$ exist and take their mean-field values $\nu = 1/2$ \cite{har90}, $\gamma = 1$ \cite{an84}, and $\eta=0$ \cite{har08}. Hence Theorem \ref{t:oab2} gives a new proof of the result of Kozma and Nachmias that $\eta_1 = 2$ in high dimension:

\begin{corollary}[\cite{kn11}]
\label{c:oab}
For Bernoulli percolation on $\Z^d$, there exists $d_0 > 0$ such that, if $d \ge d_0$, 
\[   \lim_{R \to \infty} \frac{- \log \P_{\ell_c}  [ \Lambda_1 \longleftrightarrow \partial \Lambda_R ]  }{\log R}  = 2 . \]
\end{corollary}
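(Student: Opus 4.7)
The plan is to apply Theorem \ref{t:oab2} directly, after verifying that all four input exponents $\beta, \nu, \gamma, \eta$ exist as genuine limits and take their mean-field values in sufficiently high dimension.

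First, I would import the classical high-dimensional results. Via the lace expansion and related differential-inequality techniques, there exists $d_0$ such that for all $d \ge d_0$ the four exponents exist (in the strong sense used in Section~\ref{s:ce}) and satisfy
\[ \beta = 1, \qquad \nu = 1/2, \qquad \gamma = 1, \qquad \eta = 0, \]
where $\gamma=1$ is due to Aizenman--Newman \cite{an84}, $\nu = 1/2$ to Hara \cite{har90}, $\eta=0$ to Hara \cite{har08}, and $\beta = 1$ follows from the mean-field lower bound \eqref{e:mflb} of \cite{ab87} combined with its matching upper bound (which, in high dimension, is also provided by the lace-expansion machinery and the arguments of \cite{ab87}).

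Second, I would plug these mean-field values into the double inequality \eqref{e:new} of Theorem~\ref{t:oab2}. The lower bound yields
\[ \eta_1 \ge \frac{2-\gamma}{\nu} = \frac{2-1}{1/2} = 2. \]
For the upper bound, the second term in the minimum is the binding one, since $d - 2/\nu = d - 4 \to \infty$ as $d \to \infty$, while
\[ \bar{\eta}_1 \le \frac{2-\eta}{2/\beta - 1} = \frac{2-0}{2-1} = 2. \]
Since $\eta_1 \le \bar{\eta}_1$ by construction, we conclude $\eta_1 = \bar{\eta}_1 = 2$, so that the $\liminf$ and $\limsup$ of $-\log \P_{p_c}[0 \longleftrightarrow \partial \Lambda_R]/\log R$ coincide, the full limit exists, and equals $2$.

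In short, the deduction is essentially automatic once Theorem \ref{t:oab2} is in hand: there is no genuine mathematical obstacle, only the need to cite the (highly non-trivial) high-dimensional results establishing that $(\beta, \nu, \gamma, \eta) = (1, 1/2, 1, 0)$ for $d \ge d_0$. The one point worth flagging is that Theorem \ref{t:oab2} requires the four exponents to exist as actual limits, not merely as inequalities, which is precisely what the high-dimensional literature provides.
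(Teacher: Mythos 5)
Your proposal is correct and follows the paper's own reasoning essentially verbatim: the paper offers no proof beyond the sentence preceding the corollary, which says exactly that Theorem~\ref{t:oab2} combined with the known high-dimensional mean-field values of $\nu, \gamma, \eta$ gives the result, and you carry this out explicitly. One small point worth crediting you for: the paper's sentence omits $\beta$, even though Theorem~\ref{t:oab2} formally assumes its existence for the term $(2-\eta)/(2/\beta-1)$; you correctly supply $\beta=1$ (alternatively one can bypass $\beta$ entirely via the quantitative route of Remark~\ref{r:high2}, which the paper also indicates, using only the one-sided bounds $\nu \le 1/2$ and $\eta \ge 0$ in \eqref{e:mainineq} and \eqref{e:mainineq3} to get $c_1 R^{-2} \le \P_{p_c}[A_1(R)] \le c_2 R^{-2}(\log R)^4$).
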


\begin{remark}
\label{r:high} 
Our argument is significantly simpler than the one in \cite{kn11}, however it yields only
\[   c_1 R^{-2} \le  \P_{\ell_c} [ \Lambda_1 \longleftrightarrow \partial \Lambda_R ] \le  c_2 R^{-2}  (\log R)^4  \]
whereas \cite{kn11} proved that $\P_{p_c} [ \Lambda_1 \longleftrightarrow \partial \Lambda_R ]  \asymp R^{-2}$ in the sense of bounded ratios (see Remark~\ref{r:high2}). Another difference is that we deduce $\eta_1 = 2$ in any dimension from the bounds $\nu \le 1/2$ and $\eta \ge 0$ (see Remark \ref{r:high2}, or recall the Fischer inequality $\gamma/\nu \le 2-\eta$), whereas the argument in~\cite{kn11} uses as input $d > 6$ and the two-sided bound $\eta = 0$ (or more precisely $ \mathbb{P}_{\ell_c}[ 0 \longleftrightarrow v] \asymp |v|_\infty^{-d+2}$).
\end{remark}

\subsection{Sharpness of the phase transition for finite-range models}
\label{ss:expdecay}

As part of the proof of Theorem~\ref{t:bou} we establish the mean-field bound \eqref{a:mflb} for finite-range models. For this we adapt the approach of Duminil-Copin, Raoufi and Tassion \cite{dcrt19a}; as in~\cite{dcrt19a} (and related contexts \cite{ab87,men86}), this argument also yields the \textit{sharpness} of the phase transition.

\begin{theorem}[Sharpness of the phase transition for finite-range models]
\label{t:expdecay}
Suppose $f$ satisfies Assumption \ref{a:gf} and $q$ has bounded support. Then the mean-field bound \eqref{a:mflb} holds. Moreover, for every $\ell < \ell_c$ there exist  $c_1, c_2 > 0$ such that, for $R \ge 1$,
\begin{equation}
    \label{e:expdecay}
 \mathbb{P}_\ell[  \Lambda_1 \longleftrightarrow \partial \Lambda_R  ]  \le c_1 e^{-c_2 R} . 
 \end{equation}
\end{theorem}
 
\begin{remark}
\label{r:known3}
For Gaussian percolation, the exponential decay in \eqref{e:expdecay} was only known so far in two cases: (i) in the self-dual case assuming positive correlations \cite{mv20}, and (ii) for certain positively-correlated Gaussian fields on $\mathbb{Z}^d$ \cite{dgrs20,grs22}. The mean-field bound \eqref{a:mflb} was not yet known for any Gaussian models.

We emphasise that in Theorem \ref{t:expdecay} we do not assume positive correlations, so this theorem holds for a class of models lacking positive associations. The proof also does not require the symmetry condition in Assumption \ref{a:gf}.
\end{remark}

\subsection{Elements of the proof: the `entropic' bound, and a new Russo-type inequality}

The existing upper bounds on $\eta_1$ for Bernoulli percolation rely heavily on specific properties of this model (such as the BK inequality, or for the sharper results on specific lattices, also on properties of parafermionic observables) and do not extend easily to dependent percolation models. We develop a new approach based on exploration and relative entropy arguments.

\smallskip
There are two main ingredients in our approach, giving upper and lower bounds respectively on the change in probability of events as $\ell$ varies:
\begin{enumerate}
    \item A non-differential `entropic' upper bound, with control in terms of revealment probabilities of explorations.
    \item A `Russo-type inequality' giving a lower bound on the derivative for increasing events, which when combined with the OSSS inequality also gives control in terms of revealment probabilities of explorations.
\end{enumerate}
In the self-dual case we compare these inequalities when applied to square-crossing events at criticality; by a judicious choice of algorithm the revealment probabilities are controlled by one-arm probabilities for level lines (c.f.\ Remark \ref{r:larm}). In the general case we first use the Russo-type inequality to establish the mean-field bound \eqref{a:mflb}, and then compare this to the entropic bound applied to arm events in the critical and slightly supercritical regimes.

\smallskip
Since we believe these ingredients to be of independent interest, we describe them in detail:

\subsubsection{The entropic bound}

For context, let us first state a well-known bound for Bernoulli percolation. Recall the concept of a \textit{randomised algorithm} (or `decision tree' or `exploration' depending on the context):

\begin{definition}[Randomised algorithms]
\label{d:ra}
Let $X =  (X_i)_{i \in \N}$ be a countable set of random variables taking values in arbitrary measurable spaces, and let $Z = (Z_i)_{i \in \N}$ be an auxiliary sequence of independent uniform random variables. A \textit{(randomised) algorithm} $\mathcal{A}$ on $X$ is a random sequence $X^\mathcal{A} = (X_{\varphi(1)},..,X_{\varphi(i)},..)$ such that, for each $i\in\N$, $\varphi(i) \in \N$ is distinct from all $\varphi(j)$, $j < i$, and depends only on (i) $(X_{\varphi(1)},..,X_{\varphi(i-1)})$, and (ii) the auxiliary randomness $Z_i$. The sequence $X^\mathcal{A}$ can be either infinite or finite; in the latter case we say that $\mathcal{A}$ \textit{terminates}, and a real number is returned which is a measurable function of $X^\mathcal{A}$. We say that $\mathcal{A}$ \textit{determines an event} $A$ if it almost surely terminates and returns the value $\id_A$. The \textit{revealment probability} $\textrm{Rev}(i)$ is the probability that  $\mathcal{A}$ \textit{reveals} $X_i$, i.e.\ that $X^\mathcal{A}$ contains the coordinate $X_i$.
\end{definition}

The following bound is essentially contained in \cite{os07} (see also \cite[Appendix B]{ss11} and \cite{vn20} for similar bounds):

\begin{proposition}
\label{p:ebber}
Let $Y = (Y_i)_{i \le n}$ be a finite set of independent Bernoulli random variables, and let $\P_p[\cdot]$ denote its law under parameter $p \in [0, 1]$. Let $A$ be an event, and let $\mathcal{A}$ be an algorithm on $Y$ determining $A$. Then for all $p \in (0,1)$,
\[ \Big|  \frac{d}{d p}  \mathbb{P}_p[A] \Big| \le \frac{1}{p(1-p)}  \sqrt{ \P_p[A] \E_p [N] } , \]
where $N$ is the number of coordinates that are revealed by $\mathcal{A}$.
\end{proposition}

\noindent We extend Proposition \ref{p:ebber} in two ways:
\begin{itemize}
    \item We give a non-differential version bounding $|\P_q[A]-\P_p[A]|$, $q > p$, in terms of the expected number of revealed coordinates $\E_p[N]$ when the algorithm \textit{is run at parameter $p$}. This is a sharper bound whenever the algorithm is more efficient when run at parameter $p$ compared to parameter $q$.
    \item We give a new proof based on properties of the relative entropy, which applies in a variety of settings. See Remark~\ref{r:alt} for a comparison with the approach in \cite{os07,ss11,vn20}.
\end{itemize}

To state this extension we return to the Gaussian setting. Fix a constant $s > 0$ and partition $\mathbb{R}^d$ into boxes $\mathcal{S}_s$ which are translations of $[0, s)^d$ by the lattice $s \mathbb{Z}^d$. For $S \in \mathcal{S}_s$, let $W|_S = W \id_S$ denote the restriction of the white noise $W$ to $S$, and consider the decomposition
\begin{equation}
    \label{e:orthdecomp}
  f = \sum_{S \in \mathcal{S}_s} f_S \ , \quad f_S =  q \star W|_S  ,
\end{equation}
where each $f_S$ is an independent centred Gaussian field (in the case $\X = \R^d$, we also assume that each $f_S$ is simultaneously continuous almost surely, which is possible by countability). Let $\mathcal{A}_s$ denote the collection of randomised algorithms on $(f_{S})_{S \in \mathcal{S}_s}$. We say that a box $S \in \mathcal{S}_s$ is \textit{revealed} by an algorithm $\mathcal{A} \in \mathcal{A}_s$ if $f_S$ is revealed, with $\text{Rev}(S)$ the corresponding probability.

\begin{proposition}[Entropic bound]
\label{p:entbound}
Suppose $f = q \star W$ is either a Gaussian field on $\Z^d$ or a continuous Gaussian field on $\R^d$, and assume that $q$ has bounded support and $\int q > 0$. Then for every $\ell \in \R$, compactly supported event $A$, $s > 0$, algorithm $\mathcal{A} \in \mathcal{A}_s$ that determines $A$, and $\varepsilon \ge 0$,
\[  \big| \P_{\ell + \varepsilon} \big[A \big] - \P_\ell[ A]  \big| \le \frac{\varepsilon s^{d/2} }{\int q} \! \!  \sqrt{  \max \{ \P_\ell[A], \P_{\ell + \varepsilon}  [ A  ]  \} \E_\ell  [N] },  \]
where $N$ is the number of boxes in $\mathcal{S}_s$ that are revealed by $\mathcal{A}$ .
\end{proposition}

Notice that Proposition \ref{p:entbound} compares the field $f + \ell$ and its perturbation by $\varepsilon > 0$. In fact we use an extension of this result (see Proposition~\ref{p:entbound2}) which allows for more general perturbations. For the reader interested primarily in Bernoulli percolation, we record the extended form of the entropic bound in that setting here:

\begin{proposition}[Entropic bound for Bernoulli percolation]
\label{p:ebber2}
Let $Y$, $A$ and $\mathcal{A}$ be as in Proposition \ref{p:ebber}. Let $I \subset \{1, \ldots, n\}$ be a subset of coordinates, let $p,q \in (0,1)$, and let $\P_{p; q; I}[\cdot]$ denote the law of $Y$ where the Bernoulli parameter of coordinate $Y_i$ is $q$ if $i \in I$ and $p$ if $i \notin I$. Then
\begin{equation*}
|\mathbb{P}_{p;q;I}[A]  - \mathbb{P}_p[A] | \le  \max \! \Big\{ \!   \frac{1}{\sqrt{p(1-p)}}, \frac{1}{\sqrt{q(1-q)}} \! \Big\} |p-q|  \sqrt{  \max\{ \mathbb{P}_p[A] , \mathbb{P}_{p;q;I}[A] \}   \E_p [N_I]    } ,
\end{equation*}
where $N_I$ is the number of coordinates in $I$ that are revealed by $\mathcal{A}$.
\end{proposition}

\noindent Setting $I = \{1, \ldots , n\}$ and taking $q \downarrow p$ recovers Proposition \ref{p:ebber} (with improved constant $1/\sqrt{p(1-p)}$ in place of $1/(p(1-p))$).

\subsubsection{The Russo-type inequality}
Recall the classical \textit{Russo formula} for Bernoulli percolation: for $p \in (0,1)$ and an increasing event $A$,
\begin{equation}
\label{e:russoresamp}
 \frac{d}{d p}  \mathbb{P}_p[A]    = \frac{2}{p(1-p)} \sum_i \Infl_{p;A}(i)  ,
\end{equation}
where $Y = (Y_i)_{i \le n}$ and $\P_p[\cdot]$ are as in Proposition \ref{p:ebber}, and
\[   \Infl_{p;A}(i) :=   \mathbb{P}_p \big[ \id_{Y \in A} \neq \id_{Y^{(i)} \in A} \big]  \]
denotes the \textit{resampling influence}, where $Y^{(i)}$ denotes a copy of $Y$ with the coordinate $Y_i$ resampled independently (for increasing events, the resampling influence coincides with the classical `pivotal probability' up to a constant). By combining \eqref{e:russoresamp} with the OSSS inequality $\Var_p [ \id_A   ]  \le  \frac{1}{2} \sum_{i} \textrm{Rev}(i) \Infl_{p;A}(i)$ \cite{osss05}, one obtains the bound
   \[  \frac{d}{d p}  \mathbb{P}_p[A] \ge  \frac{4  }{p(1-p)} \frac{\Var_p [ \id_A  ]}{  \max_i \textrm{Rev}(i) } ,  \]
where the revealment probabilties are under $\P_p[\cdot]$.

In order to apply similar arguments in Gaussian percolation, one wishes to have a Gaussian extension of \eqref{e:russoresamp}; in fact one only needs a \textit{lower bound} in place of the equality.

\smallskip 
In \cite{mv20} such an extension was given for discrete Gaussian fields with finite-range dependence (i.e.\ the case $\X = \Z$ and $q$ with bounded support). As in the previous subsection, fix $s > 0$ and recall the orthogonal decomposition $f = \sum_{S \in \mathcal{S}_s} f_S$ in \eqref{e:orthdecomp}. For an event $A$ and $S \in \mathcal{S}_s$, we define the \textit{resampling influence}
\begin{equation}
    \label{e:infl}
 \Infl_{\ell;A}(S) :=   \mathbb{P}_\ell \big[ \id_{\{f \in A\}} \neq \id_{\{f^{(S)} \in A\}} \big]  ,
 \end{equation} 
where $f^{(S)}$ denotes the field $f = \sum_{S \in \mathcal{S}_s} f_S$ with $f_S$ resampled independently. The result in \cite{mv20} concerned the case $\X = \Z$ with $s = 1$, so that resampling $f_S$ is equivalent to resampling the one-dimensional Gaussian $W|_S$. We state a version of this result:
 
 \begin{proposition}[Russo-type inequality, discrete $L^1$-version; {\cite[Proposition 5.3]{mv20}}]
 \label{p:russodis}
 Suppose $f = q \star W$ is a Gaussian field on $\Z^d$ and $q \ge 0$ has bounded support. Then there exists a constant $c > 0$ depending only on $d$ such that, for every $\ell \in \R$ and increasing compactly supported event~$A$,
\[ \frac{d}{d \ell} \mathbb{P}_\ell[A]  \ge \frac{c }{\|q\|_1} \sum_{S \in \mathcal{S}_1}  \Infl_{\ell;A}(S)  .\]
 \end{proposition}
\noindent Observe that setting $q(x) = \id_{x =0}$ in Proposition \ref{p:russodis} recovers \eqref{e:russoresamp} as a lower bound, up to a constant.
 
 \smallskip
 Although this result concerns discrete Gaussian fields, in \cite{mv20} it was applied to continuous fields by first \textit{discretising} the white noise $W$ at scale $\varepsilon \ll 1$. However the presence of the `$L^1$-type' factor $\|q\|_1$ makes the bound degenerate in the $\varepsilon \to 0$ limit. Although this issue was circumvented in the specific application in \cite{mv20}, it prevented an adaptation of \cite{dcrt19a} to prove Theorem \ref{t:expdecay} for continuous fields, even finite-range dependent.
 
 \smallskip
 In this paper we establish an improvement on Proposition \ref{p:russodis} which does not suffer from this degeneracy, and applies directly to discrete and continuous fields. We also remove the requirement that $q \ge 0$. To achieve this we analyse the effect of resampling \textit{large regions of white noise simultaneously}. Interpreting the Russo-type inequality as a consequence of the Gaussian isoperimetric inequality, we then exploit the fact that Gaussian isoperimetry is dimension-free.
 
 \smallskip
 We say that an event $A$ is a \textit{continuity event} if $\ell \mapsto \mathbb{P}_\ell[ f + g|_{\X^d} \in A ]$ is continuous for every smooth function $g : \R^d \to \R$. Recall the \textit{Dini derivatives}, defined for $g: \R \to \R$ as
\begin{equation}
    \label{e:dini}
    \frac{d^+}{dx}g(x)=\limsup\limits_{\varepsilon \downarrow 0}\frac{g(x+\varepsilon)-g(x)}{\varepsilon} \quad \text{and} \quad \frac{d^-}{dx}g(x)=\liminf\limits_{\varepsilon \downarrow 0}\frac{g(x+\varepsilon)-g(x)}{\varepsilon} .
    \end{equation}

\begin{proposition}[Russo-type inequality, $L^2$-version]
\label{p:russo}
Suppose $f = q \star W$ is either a Gaussian field on $\Z^d$ or a continuous Gaussian field on $\R^d$, and let $r  >  0$ be such that $q$ is supported on $\Lambda_r$. Then there exists a constant $c > 0$ depending only on $d$ such that, for every $\ell \in \R$, $s > 0$, and increasing compactly supported continuity event $A$,
\[ \frac{d^-}{d \ell} \mathbb{P}_\ell[A]  \ge \frac{c \min\{1, (s/r)^d\}}{\|q\|_2} \sum_{S \in \mathcal{S}_s}  \Infl_{\ell;A}(S) .\]
\end{proposition}

\begin{remark}
In the literature there have been other approaches to obtaining Russo-type inequalities for Gaussian percolation using alternative measures of influence, see e.g.\ \cite[Proposition 3.2]{rod17} and \cite[Theorem 2.19]{rv20}. However these also suffer from degeneracy issues when passing to the continuum, and do not combine easily with the OSSS inequality.
\end{remark}

\subsubsection{A general lower bound for revealment probabilities}
\label{s:genbound}

Recall that in the self-dual case we obtain bounds on $\eta_1$ by comparing the entropic bound Proposition \ref{p:ebber} with the Russo-type inequality in Proposition \ref{p:russo} (and the OSSS inequality) for a well-chosen algorithm.

\smallskip
A similar approach appeared previously in the general setting of increasing Boolean functions~\cite{bsw05}, where it was used to obtain the following result:

\begin{proposition}[{\cite[Theorem 2 (part 2)]{bsw05}}]
\label{p:genrev}
Let $Y = (Y_i)_{i \le n}$, $A$ and $\mathcal{A}$ be as in Proposition~\ref{p:ebber}, and suppose that $A$ is increasing. Then
\[ \max_{i \le n} \textrm{Rev}(i) \ge \frac{ \Var_{1/2}[ \id_A ]^{2/3}}{ n^{1/3}}   , \]
where the revealment probabilties are under $\P_p[\cdot]$.
\end{proposition}

As far as we know one cannot recover the bound $\eta_1 \le 1/3$ in the self-dual case, even for Bernoulli percolation, simply by applying the result in Proposition \ref{p:genrev} to a well-chosen algorithm. Instead one needs a `conditional' version that bounds the maximum revealment probabilities within a subset of coordinates. Since we believe it to be of independent interest, we state this bound in the Bernoulli setting. See Remark \ref{r:genrev} for comments on the Gaussian version.

\begin{proposition}
\label{p:genrev2}
Let $Y$, $A$ and $\mathcal{A}$ be as in Proposition \ref{p:genrev}, let $I \subset \{1, \ldots, n\}$ be a subset of coordinates, and let $\mathcal{F}_I$ be the $\sigma$-algebra generated by $(Y_i)_{i \in I}$. Then
\[   \max_{i \in I} \textrm{Rev}(i) \ge   \frac{ \big(4  \Var_p \big[ \, \P_p[A  \: | \: \mathcal{F}_I ]  \big] \big)^{2/3} }{ (p(1-p) \P_p[A] |I|)^{1/3}}   ,\]
where the revealment probabilties are under $\P_p[\cdot]$.
\end{proposition}
\noindent Setting $p = 1/2$ and $I = \{1,2,\ldots ,n\}$ recovers Proposition \ref{p:genrev2} with an improved constant.

\begin{proof}
We use the conditional version of the OSSS inequality (see the proof of Proposition~\ref{p:lbderiv} for how to derive this from the standard version)
\[   \Var_p \big[ \P_p[A  \: | \: \mathcal{F}_I ]  \big]  \le  \frac{1}{2} \sum_{i \in I} \textrm{Rev}(i) \Infl_{p;A}(i)  . \]
  Combining with Russo's formula \eqref{e:russoresamp} (applied to the subset $I$) gives
\begin{equation}
    \label{e:genrev1}
\sum_{i \in I} \frac{\partial}{\partial p_i} \P_p[A]  = \frac{2}{p(1-p)} \sum_{i \in I}  \Infl_{p;A}(i)  \ge \frac{4}{p(1-p)} \frac{  \Var_p \big[ \, \P_p[A  \: | \: \mathcal{F}_I ]  \big] }{  \max_{i \in I} \textrm{Rev}(i) }  ,
\end{equation}  
where $\partial/\partial p_i$ denotes the derivative with respect to changing the Bernoulli parameter at coordinate $i$. On the other hand, Proposition \ref{p:ebber} (with $q \to p$) gives 
\begin{equation}
    \label{e:genrev2}
  \sum_{i \in I} \frac{\partial}{\partial p_i} \P_p[A] \le \frac{1}{\sqrt{p(1-p)}} \sqrt{\P_p[A] \E_p |N_I| }  \le \frac{1}{\sqrt{p(1-p)}} \sqrt{\P_p[A] |I| \max_{i \in I} \textrm{Rev}(i) } . \end{equation}
 Combining \eqref{e:genrev1}--\eqref{e:genrev2} and rearranging proves the result.
\end{proof}

 \begin{remark}
If $p=1/2$, the quantity $\Var_p [  \P_p[A   |  \mathcal{F}_I ]  ] $ has an interpretation as the square-sum of the Fourier coefficients of $\id_A$ supported on non-empty subsets of $I$ (see, e.g., \cite{gps10}).
\end{remark}

\subsubsection{Other models}
Other than Bernoulli and Gaussian percolation, our approach adapts naturally to many other models in the Bernoulli percolation universality class. For instance, both Poisson-Voronoi and Poisson-Boolean percolation could be treated in a similar way (although in the latter case the bounds on $\eta_1$ may depend on the decay of the radius distribution, and note that only the former model is self-dual in dimension $d=2$). For brevity we do not discuss details here, although see \cite{dm21b} for a recent application of the entropic bound to the Poisson-Boolean model.

\subsection{Outline of the paper} 
In Section \ref{s:ent} we establish the entropic bound in Proposition \ref{p:entbound}. In Section \ref{s:bou} we study finite-range models and give the proof of Theorems \ref{t:bou} and \ref{t:oab2}. In Section \ref{s:gf} we adapt the arguments to models with unbounded range of dependence, and give the proof of Theorem \ref{t:oagf}. In Section \ref{s:osss} we establish the Russo-type inequality in Proposition \ref{p:russo}, and apply it to prove Theorem \ref{t:expdecay}. The appendix contains a technical result on orthogonal decompositions of Gaussian fields.

\subsection{Acknowledgements}
The second author was partially supported by the Australian Research Council (ARC) Discovery Early Career Researcher Award DE200101467. The authors thank Damien Gayet, Tom Hutchcroft, Ioan Manolescu and Hugo Vanneuville for helpful discussions, comments on an earlier draft, and for pointing out references \cite{dcmt20} (Ioan) and \cite{bsw05,bd20} (Hugo), and an anonymous referee for valuable comments that helped improve the presentation of the paper. This work was initiated while the first author was visiting the second author at Queen Mary University of London and we thank the University for its hospitality.

\medskip
\section{The entropic bound}
\label{s:ent}

In this section we prove the entropic bound in Proposition \ref{p:entbound}, or rather a slight extension of this bound. Suppose $f = q \star W$ is either a Gaussian field on $\Z^d$ or a continuous Gaussian field on $\R^d$, and assume that $q$ has bounded support.

\begin{proposition}[Entropic bound; extended version]
\label{p:entbound2}
For every $\ell \in \R$, compactly supported event $A$, $s > 0$, algorithm $\mathcal{A} \in \mathcal{A}_s$ that determines $A$, subset $\mathcal{S}' \subseteq \mathcal{S}_s$, and $\varepsilon \ge 0$,
\[ \Big| \P_\ell \Big[f+ \varepsilon \! \sum_{S \in \mathcal{S}'} q \star \id_S \in A \Big] - \P_\ell[f\in A]  \Big| \le \varepsilon s^{d/2} \! \!  \sqrt{  \max\Big\{ \P_\ell[A], \P_\ell \big[f+ \varepsilon \! \sum_{S \in \mathcal{S}'} q \star \id_S \in A \big] \Big\} \E_\ell  [N_{\mathcal{S}'}] }, \]
where $N_{\mathcal{S}'}$ is the number of boxes in $\mathcal{S}'$ that are revealed by $\mathcal{A}$.
\end{proposition}

\begin{proof}[Proof of Proposition \ref{p:entbound} assuming Proposition \ref{p:entbound2}]
Since  $\! \sum_{S \in \mathcal{S}_s} q \star \id_S = \int q$, setting $\mathcal{S}' = \mathcal{S}_s$ in Proposition \ref{p:entbound2} and replacing $\varepsilon \mapsto \varepsilon / \int q$ gives Proposition \ref{p:entbound}.
\end{proof}

\subsection{The relative entropy}
Our proof of Proposition \ref{p:entbound2} relies on some properties of the relative entropy which we recall here. For $P$ and $Q$ probability measures on a common measurable space, the \textit{relative entropy (or Kullback-Leibler divergence) from $P$ to $Q$} is defined as
\[   D_{KL}(P || Q) := \int \log \Big( \frac{dP}{dQ} \Big) \, dP  \]
if $P$ is absolutely continuous with respect to~$Q$, and $D_{KL}(P || Q) := \infty$ otherwise; $D_{KL}(P || Q)$ is non-negative by Jensen's inequality. If $X$ and $Y$ are random variables taking values in a common measurable space, with respective laws $P$ and $Q$, we also write $D_{KL}(X || Y)$ for $D_{KL}(P||Q)$. We shall need two basic properties of the relative entropy (see \cite[Theorem 2.2 and Corollary 3.2]{kul78}):
\begin{enumerate}
\item(Chain rule) Let $X = (X_1, X_2)$ and $Y = (Y_1, Y_2)$ be $\R^{k_1 + k_2}$-valued mutually absolutely continuous random variables which have densities with respect to the Lebesgue measure. Then 
\begin{equation}
\label{e:chain}
D_{KL}(X || Y) = D_{KL}(X_1 || Y_1) +  \mathbb{E}_{x \sim X_1} \big[  D_{KL}( (X_2 | X_1 = x)  || (Y_2 | Y_1 = x ) ) \big] . 
\end{equation}
This expression is well-defined since $G(x) := D_{KL}( (X_2 | X_1 = x)  || (Y_2 | Y_1 = x ) )$ is well-defined for an $X_1$-almost sure set of values of $x$.
\item(Contraction) Let $X$ and $Y$ be random variables taking values in a common measurable space and let $F$ be a measurable map from that space. Then 
\begin{equation}
\label{e:contract}
 D_{KL}( X || Y ) \ge  D_{KL}( F(X) || F(Y) )  .
 \end{equation}
\end{enumerate}

\smallskip
We shall also need a simple formula for the relative entropy of stopped sequences of i.i.d.\ random variables. A \textit{stopping time} for a real-valued sequence $X = (X_i)_{i \ge 1}$ is a positive integer $\tau = \tau(X)$ such that $\{\tau \ge n + 1\}$ is determined by $(X_i)_{i \le n}$. We define the corresponding \textit{stopped sequence} $X^\tau = (X^\tau_i)_{i \ge 1}$ as $X^\tau_i = X_i$ for $i \le \tau$, and $X^\tau_i = \dagger$ for $i > \tau$, where $\dagger$ is an arbitrary symbol.

\begin{lemma}
\label{l:kl}
Let $X = (X_i)^n_{i \ge 1}$ and $Y = (Y_i)^n_{i \ge 1}$ be finite real-valued sequences of i.i.d.\ random variables whose respective univariate laws $\mu$ and $\nu$ are mutually absolutely continuous and have densities with respect to the Lebesgue measure. Let $\tau \le n$ be a stopping time, and let $X^\tau$ and $Y^\tau$ be the corresponding stopped sequences. Then
\[ D_{KL} \big(   X^\tau \big\|   Y^\tau \big)  =  \E[\tau(X)]  \, D_{KL}( \mu \| \nu )   .   \] 
\end{lemma}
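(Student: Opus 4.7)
The plan is to apply the chain rule \eqref{e:chain} coordinate-by-coordinate to decompose $D_{KL}(X^\tau \| Y^\tau)$ into a sum over $i = 1, \ldots, n$ of expected conditional KL divergences, and then exploit the fact that the event $\{\tau \ge i\}$ is measurable with respect to the first $i-1$ coordinates to evaluate each term.

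More precisely, writing $X^\tau_{<i} := (X^\tau_1, \ldots, X^\tau_{i-1})$ and similarly for $Y$, iterated application of \eqref{e:chain} gives
\[ D_{KL}(X^\tau \| Y^\tau) = \sum_{i=1}^n \mathbb{E}_{x \sim X^\tau_{<i}} \Big[ D_{KL}\big( (X^\tau_i \mid X^\tau_{<i} = x) \,\big\|\, (Y^\tau_i \mid Y^\tau_{<i} = x ) \big) \Big] . \]
Fix $i$ and condition on $X^\tau_{<i} = x$. By the definition of a stopping time, the event $\{\tau \ge i\}$ is determined by $x$, so we split into two cases. On $\{\tau < i\}$, we have $X^\tau_i = \dagger$ almost surely and likewise $Y^\tau_i = \dagger$, so the conditional laws coincide as point masses and the conditional KL divergence vanishes. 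On $\{\tau \ge i\}$, the coordinate $X^\tau_i$ equals $X_i$, which is independent of $(X_j)_{j<i}$ and distributed as $\mu$; the analogous statement holds for $Y$, so the conditional KL divergence equals $D_{KL}(\mu \| \nu)$.

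Combining these two cases, each term in the sum reduces to $\P[\tau \ge i] \, D_{KL}(\mu \| \nu)$, and therefore
\[ D_{KL}(X^\tau \| Y^\tau) = D_{KL}(\mu \| \nu) \sum_{i=1}^n \P[\tau \ge i] = \E[\tau(X)] \, D_{KL}(\mu \| \nu) , \]
as claimed. The argument is essentially bookkeeping; the only delicate point is the stopping-time measurability that lets us treat $\{\tau \ge i\}$ as deterministic once we condition on $X^\tau_{<i}$, but this is exactly the defining property of a stopping time. No separate case for $D_{KL}(\mu \| \nu) = \infty$ is needed since both sides of the identity are then infinite (provided $\P[\tau \ge 1] > 0$, which holds as $\tau \ge 1$).
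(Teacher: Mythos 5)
Your proof is correct and takes essentially the same approach as the paper: iterated application of the chain rule \eqref{e:chain} plus the observation that stopping-time measurability renders each conditional term equal to either $0$ (on $\{\tau < i\}$) or $D_{KL}(\mu\|\nu)$ (on $\{\tau \ge i\}$). The paper phrases this as an explicit induction on $k \mapsto D_{KL}(X^{k\wedge\tau}\|Y^{k\wedge\tau})$ rather than a single unwound sum, but the content is identical; your final sum $\sum_{i=1}^n \P[\tau\ge i]$ is in fact slightly cleaner bookkeeping than the paper's displayed $\sum_{1\le k\le n-1}\P[\tau\ge k+1]$, which omits the $i=1$ base-case term from the written expression.
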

\begin{proof}
Define $X^{k \wedge \tau} =  (X^\tau_i)_{i \le k}$ and analogously for $Y$. By the chain rule \eqref{e:chain}, for $1 \le k \le n-1$,
\begin{align*}
& D_{KL} \big(   X^{(k+1) \wedge \tau} \big\|   Y^{(k+1) \wedge \tau} \big) \\
&  \quad  =  D_{KL} \big(   X^{k \wedge \tau}  \big\| Y^{k \wedge \tau}  \big)   \! +  \E_{x \sim (X^\tau_i)_{i \le k}} \!  \big[ D_{KL} \! \left( X^\tau_{k+1} \big| (X^\tau_i)_{i \le k} \! = \! x  \big\| Y^\tau_{k+1}  \big| (Y^\tau_i)_{i \le k}  \! = \! x \right)  \! \big]  \\
&  \quad  =  D_{KL} \big(   X^{k \wedge \tau}  \big\| Y^{k \wedge \tau}  \big)   \! +  \E_{x \sim (X^\tau_i)_{i \le k}} \!  \big[  \id_{\tau(X) \ge k+1} D_{KL} \! \left( X_{k+1} \big| (X^\tau_i)_{i \le k} \! = \! x  \big\| Y_{k+1}  \big| (Y^\tau_i)_{i \le k}  \! = \! x \right)  \! \big]  \\
&   \quad =  D_{KL} \big(   X^{k \wedge \tau}  \big\| Y^{k \wedge \tau}  \big)   \! +\P[\tau(X)\ge k+1]  D_{KL}( \mu \| \nu ) \end{align*}
where in the last step we used that $\tau$ is a stopping time. Hence, by induction,
\begin{equation*}
 D_{KL} \big(   X^\tau \big\|   Y^\tau \big)   =   \sum_{1 \le k \le n-1} \P[\tau(X) \ge k+1] D_{KL}( \mu \| \nu )  =    \E[\tau(X)] D_{KL}( \mu \| \nu ) .   \qedhere 
\end{equation*}
\end{proof}

Finally we need a variant of Pinsker's inequality:
\begin{lemma}
\label{l:pinsker}
Let $P$ and $Q$ be probability measures on a common measurable space and let $A$ be an event. Then
\[ |P(A) - Q(A)| \le  \sqrt{ 2 \max\{ P(A), Q(A) \} D_{KL}(P \| Q)} .  \]
\end{lemma}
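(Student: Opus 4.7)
The plan is to reduce the claim, via the contraction property \eqref{e:contract}, to a sharpened Pinsker-type inequality for Bernoulli distributions, and then to verify the latter by a direct Taylor expansion.

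For the reduction, I would apply \eqref{e:contract} with the $\{0,1\}$-valued map $F(x) = \id_A(x)$; since $F_\ast P = \Ber(P(A))$ and $F_\ast Q = \Ber(Q(A))$, this gives
\[ D_{KL}(P \| Q) \ge D_{KL}\bigl( \Ber(P(A)) \, \big\| \, \Ber(Q(A)) \bigr), \]
so that it suffices to prove the one-dimensional inequality
\[ (p-q)^2 \le 2 \max\{p, q\} \, D_{KL}\bigl( \Ber(p) \, \big\| \, \Ber(q) \bigr) \qquad \text{for all } p, q \in [0, 1]. \]
The classical Pinsker inequality would yield the weaker factor $\tfrac{1}{2}$ in place of $2\max\{p, q\}$, and this sharper refinement is precisely what makes the output of Proposition \ref{p:genub} effective when $P(A)$ is small, as is the case in the applications to $A = A_1(R)$ at criticality.

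For the Bernoulli inequality I would split on the sign of $p-q$ (the Bernoulli KL divergence being asymmetric in its arguments) and in each sub-case fix the larger parameter while perturbing the smaller one by $h \ge 0$. For instance, when $q \le p$, set $g(h) := D_{KL}(\Ber(p)\,\|\,\Ber(p-h))$; a direct differentiation yields $g(0) = g'(0) = 0$ and
\[ g''(h) = \frac{p}{(p-h)^2} + \frac{1-p}{(1-p+h)^2} \ge \frac{p}{p^2} = \frac{1}{p} = \frac{1}{\max\{p,q\}} \]
on $h \in [0, p]$, so Taylor's theorem immediately gives $g(h) \ge h^2/(2p)$, which is the claim. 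The sub-case $p \le q$ is parallel: one now fixes $q$ and considers $\tilde g(h) := D_{KL}(\Ber(q-h)\,\|\,\Ber(q))$, whose first derivative again vanishes at $h=0$ and whose second derivative $\tfrac{1}{q-h} + \tfrac{1}{1-q+h}$ is bounded below by $\tfrac{1}{q}$ on $[0,q]$. The only mild subtlety is the asymmetry of $D_{KL}$ that forces the case split; in each case the mechanism is the same -- retaining only the ``large-side'' term in $g''(h)$ already yields a lower bound inversely proportional to $\max\{p,q\}$, which is exactly what is required.
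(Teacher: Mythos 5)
Your proposal uses the same reduction (contraction to the Bernoulli case via $F = \id_A$) as the paper, but then proves the Bernoulli-level inequality by a second-order Taylor argument with a pointwise lower bound on $g''$, whereas the paper instead writes $D_{KL}(\Ber(x)\|\Ber(y)) = \int_y^x \frac{x-s}{s(1-s)}\,ds$ (a first-order Taylor identity with integral remainder, exploiting that the derivative already carries a factor $x-s$), and bounds the integrand via $\sup_{s\in[a,b]}s(1-s)\le\max\{a,b\}$. Your route requires a case split on the sign of $p-q$ because you apply the second-order expansion from the larger parameter, while the paper's integral form is sign-agnostic and handles both directions in a single display. Structurally the two are cousins -- the bound $g''(h)\ge 1/\max\{p,q\}$ you extract is exactly the reciprocal of the bound on $s(1-s)$ the paper uses -- so neither argument is strictly more elementary, but the paper's is slightly more compact.

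One small caveat you should add: your assertion that ``direct differentiation yields $g(0)=g'(0)=0$'' fails at the endpoints. For $p=1$ (and $q<p$) one has $g(h)=D_{KL}(\Ber(1)\,\|\,\Ber(1-h))=-\log(1-h)$, so $g'(0^+)=1\neq 0$ (the general formula $g'(0)=\frac{p}{p}-\frac{1-p}{1-p}$ is indeterminate at $p\in\{0,1\}$). The inequality is still true there -- indeed $g(h)\ge h+h^2/2$ is even stronger -- but your stated Taylor derivation does not apply. Similarly $q=1$ (with $p<1$) makes the divergence infinite and is trivially fine. The paper sidesteps this by explicitly disposing of the boundary cases $x,y\in\{0,1\}$ before the integral computation, which only assumes $x,y\in(0,1)$; you should do the same before invoking the second-derivative bound.
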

\begin{proof}
We use a standard reduction to the binary case. Let $\textrm{Ber}(p)$ and $\textrm{Ber}(q)$ be Bernoulli random variables with respective parameters $p := P(A) $ and $q := Q(A)$. By the contraction property \eqref{e:contract} $D_{KL}(P \| Q) \ge D_{KL}( \textrm{Ber}(p) \| \textrm{Ber}(y))$, so it suffices to prove that
\begin{equation}
    \label{e:pinsker}
  (p-q)^2 \le    2 \max\{p,q \}   D_{KL}( \textrm{Ber}(p) \| \textrm{Ber}(q)) . 
  \end{equation} 
If $p \in \{0,1\}$ or $q \in \{0, 1\}$ then \eqref{e:pinsker} is trivial since either the right-hand side is at least $1$ (if $p \neq q$) or both sides are zero (if $p=q$). On the other hand, if $p,q \in (0, 1)$ then
\begin{align}
\label{e:klber}
D_{KL}( \textrm{Ber}(p) \| \textrm{Ber}(q)) & :=   p \log \frac{p}{q} +  (1-p)  \log \frac{1-p}{1-q}   = \int_q^p  \frac{p-s}{s(1-s)} ds   \\
\nonumber &   \ge \frac{1}{  \max\{ p, q \} }  \int_q^p  (p-s)  ds  =   \frac{1}{   2 \max\{ p, q \} }  (p-q)^2  
\end{align}
where we used that $\sup_{ s \in [a, b] } s(1-s) \le \max\{  a, b \} $ for $0 \le a \le b \le 1$.
\end{proof}
\begin{remark}
In the proof we could replace $\max\{ p,q \} $ with $\min\{ \max\{ p,q \} , 1/4 \}$, which recovers the classical Pinsker's inequality $d_{TV}(P, Q) := \sup_A |P(A) - Q(A)| \le  \sqrt{ D_{KL}(P \| Q) / 2 } $.
\end{remark}

\subsection{Proof of Proposition \ref{p:entbound2}}

Consider $S \in \mathcal{S}_s$. We use the decomposition (see Proposition~\ref{p:odecom} in the appendix)
\[ f_S(\cdot) \stackrel{d}{=} \frac{Z_S (q \star \id_S)(\cdot)}{s^{d/2}} + g_S(\cdot), \]
where $Z_S$ is a standard Gaussian random variable and $g_S$ is an independent Gaussian field. This implies also that
\[  f_S(\cdot) + \ell + \varepsilon (q \star \id_S)(\cdot) \stackrel{d}{=} \frac{(Z_S +  \varepsilon s^{d/2})(q \star \id_S)(\cdot)}{s^{d/2}} + \ell + g_S(\cdot) ,\]
in other words, the change in the law of  $f_S(\cdot) + \ell + \varepsilon (q \star \id_S)(\cdot) $ compared to $  f_S(\cdot) + \ell  $ can be induced by adding $\varepsilon s^{d/2}$ to the Gaussian variable $Z_S$.

Let $\mathcal{W}_{\mathcal{S}'}$ denote the subset of $\mathcal{S}_s$ that is revealed by the algorithm, and let $W = (Z_S)_{S \in  \Wc_{\mathcal{S}' }}$ be arranged in order of revealment. Moreover let $W'$ denote the union of $(Z_S)_{S \notin \mathcal{S}'}$ and $(g_S)_{S \in \mathcal{S}_s}$. Finally let $\QQ_\ell$ denote the law of $f + \ell +  \varepsilon \! \sum_{S \in \mathcal{S}'} q \star \id_S$.

First suppose that the algorithm $\mathcal{A}$ does not depend on any auxiliary randomness. Then the event $A$ is measurable with respect to $(W, W')$, and so by Lemma~\ref{l:pinsker}
\begin{equation}
    \label{e:entbound3}
  |\QQ_\ell[A]  - \P_\ell[A] |  \le \sqrt{ 2 \max\{  \P_\ell[A] ,  \QQ_\ell[A]   \}      D_{KL}( (X, V)  || (Y, V) )    ]  }   \end{equation} 
where $(X,V)$ (resp.\ $(Y,V)$) is a random variable with the law of $(W,W')$ under $\P_\ell$ (resp.\ $\QQ_\ell$). Moreover, conditionally on $W'$, $W$ has the law, under $\P_\ell$ (resp.\ $\QQ_\ell$), of a sequence of i.i.d.\ standard Gaussian random variables with mean $0$ (resp.\ $\varepsilon s^{d/2}$) stopped at the stopping time $N_{\mathcal{S}'} = |\mathcal{W}_{\mathcal{S}'}|$. Hence by the chain rule for the Kullback-Liebler divergence and Lemma \ref{l:kl}, 
\begin{equation}
    \label{e:entbound4}
    D_{KL}( (X, V)  || (Y, V) )  =  \E \big[  D_{KL}( (X | \mathcal{F}_V)  || (Y | \mathcal{F}_V) ) \big] =   \E_\ell [N_{\mathcal{S}'}]  D_{KL}(Z  \| Z + \varepsilon s^{d/2} ) 
    \end{equation}
where $\mathcal{F}_{Z}$ denotes the $\sigma$-algebra generated by $Z$. Since $D_{KL}(Z \| Z +  \varepsilon s^{d/2}) = \varepsilon^2 s^d/ 2$, combining \eqref{e:entbound3}--\eqref{e:entbound4} gives the result.

The general case follows by averaging over any auxiliary randomness in the algorithm, since by Jensen's inequality $\E_\ell[ \sqrt{  \E_\ell[N_{\mathcal{S}'} | \mathcal{G} ] } ] \le \E_\ell[ \sqrt{ N_{\mathcal{S}'}  } ]$ for any sub-$\sigma$-algebra $\mathcal{G}$.

\begin{remark}
To prove the Bernoulli version in Proposition \ref{p:ebber2} one can follow the same steps leading to \eqref{e:entbound3}--\eqref{e:entbound4}, but replace $D_{KL}(Z \| Z +  \varepsilon s^{d/2}) = \varepsilon^2 s^d/ 2$ with the bound (recalling \eqref{e:klber})
\begin{align*}
  D_{KL}(\Ber(p) \| \Ber(q) )    =  \int_q^p  \frac{p-s}{s(1-s)} ds  & \le \max\Big\{   \frac{1}{p(1-p)}, \frac{1}{q(1-q)}   \Big\}  \int_q^p (p-s) ds  \\
 & = \max\Big\{   \frac{1}{2p(1-p)}, \frac{1}{2q(1-q)}   \Big\} (p-q)^2   .
 \end{align*}
\end{remark}
\begin{remark}
\label{r:alt}
For comparison we sketch an alternative approach which is similar to arguments in \cite{os07,ss11,vn20}; this recovers the differential form of Proposition \ref{p:entbound2}, i.e.\ 
\begin{equation}
\label{e:ubalt}
\Big|  \frac{d}{d \varepsilon}  \mathbb{P}_\ell\Big[ f + \varepsilon \sum_{S \in \mathcal{S}' } q \star \id_S \in A  \Big] \Big|_{\varepsilon=0} \Big|  \le  s^{d/2}  \sqrt{   \P_\ell[A]  \E_\ell  |N_{\mathcal{S}'}|} ,
 \end{equation}
 but is not sufficient for all of our results.
 
 Recall the representation
\[  f_S(\cdot) + \ell + \varepsilon (q \star \id_S)(\cdot) \stackrel{d}{=} \frac{(Z_S +  \varepsilon s^{d/2})(q \star \id_S)(\cdot)}{s^{d/2}} + \ell + g_S(\cdot) . \]
 Then the Cameron-Martin formula applied to $(Z_S, g_S)_S$ gives that
\begin{equation}
\label{e:russocov}
  \frac{d}{d \varepsilon}  \mathbb{P}_\ell\Big[ f + \varepsilon \sum_{S \in \mathcal{S}' } q \star \id_S \in A  \Big]  \Big|_{\varepsilon=0}   =   s^{d/2} \sum_{S \in \mathcal{S}'} \cov_\ell[\id_A,Z_S]  .
\end{equation}
Decompose
\[ \sum_{S \in \mathcal{S}'} \cov_\ell[\id_A,Z_S]  = \sum_{S \in \mathcal{S}'}   \cov_\ell [\one_A \one_{\rev(S)}, Z_S ] +\sum_{S \in \mathcal{S}'}  \cov_\ell [\one_A \one_{\rev(S)^\compl}, Z_S ] . \]
 One can check that $\one_A \one_{\rev(S)^\compl}$ is independent of $Z_S$ and so the second sum vanishes. Hence~\eqref{e:russocov} is in absolute value at most
\[  \Big| \sum_{S \in \mathcal{S}'}  \cov_p[ \one_A \one_{\rev(S)},Z_S ]  \Big| \leq  s^{d/2}  \sqrt{ \P_\ell[A] \E_\ell \Big[ \Big( \sum_{S \in \mathcal{S}'}  \one_{\rev(S)} Z_S \Big)^2\Big] }  \]
where we used the the Cauchy-Schwartz inequality. For $S$ and $S'$ introduce the event 
\[ \rev(S,S'):=\rev(S)\cap\rev(S')\cap\{S \text{ is revealed before }S'\}. \]
Again one checks that, for $S\neq S'$, $\one_{\rev(S,S')}Z_S$ is independent of $Z_{S'}$, and also that $\one_{\rev(S)}$ is independent of $Z_S$. Hence 
\[ \E_\ell \Big[\Big(\sum_{S \in \mathcal{S}'}  \one_{\rev(S)} Z_S \Big)^2\Big]   = \sum_{S \in \mathcal{S}'} \E_\ell \left[\one_{\rev(S)} Z_S^2\right]  = \sum_{S \in \mathcal{S}'}   \E_\ell[ \id_{\rev(S)} ] = \E_\ell [N_{\mathcal{S}'}] , \]
where the first equality used that off-diagonal terms are zero by independence, and the second equality again used independence and that $\E[Z_S^2]=1$. Combining the above gives \eqref{e:ubalt}.
\end{remark}


\medskip
\section{Finite-range models}
\label{s:bou}

In this section we prove our results on the one-arm exponent for Gaussian models with finite-range dependence. Throughout we suppose that Assumption \ref{a:gf} and \eqref{a:pos1} holds, and we let $r  >  0$ be such that $q$ is supported on $\Lambda_r$.

\smallskip
Let us begin by introducing notation for connection events. For $k, R> 0$, define the box $B_k(R):=  [-R, R] \times [-kR, kR]^{d-1}  \subset \X^d$, and the `box-crossing event'
\[  \textrm{Cross}_k(R) := \Big\{ \{-R\} \times [-kR,kR]^{d-1}  \stackrel{B_k(R)}{\longleftrightarrow} \{R\} \times[kR,kR]^{d-1}  \Big\}  \]
where, for $E \subset \X^d$,
\[ \{ A  \stackrel{E}{\longleftrightarrow} B \} :=  \{ \text{there exists a path in $\{f \ge 0\} \cap E$ that intersects $A$ and $B$}\}  . \]
For $R \ge 0$, define the \textit{one-arm event}
\[A_1(r,R):=\{ \Lambda_r \longleftrightarrow \partial \Lambda_R\}  \ , \quad A_1(R) = A_1(1,R) . \]
In the self-dual case that $f$ is continuous and $d=2$, we also introduce the one-arm event for \textit{level lines} mentioned in Remark~\ref{r:larm}, analogous to a \textit{polychromatic two-arm event} in Bernoulli percolation. This is defined as
\[ A_2(r,R):=\{ \Lambda_r   \doublepath \partial \Lambda_R\} \ , \quad A_2(R) = A_2(1,R) ,\]
where 
\begin{align*}
\{A   \stackrel{E}{\doublepath} B\} &= \{ \text{there exists a path in $\{f  = 0\} \cap E$ that intersects $\Lambda_r$ and $\partial \Lambda_R$}\}  \\
& = \{A    \stackrel{E}{\longleftrightarrow}B  \}\cap \{ \text{there exists a path in $\{f  \le 0\} \cap E$ that intersects $A$ and $B$}\}  , 
\end{align*}
and $\{A \doublepath B \} = \{A  \stackrel{\X^d}{\doublepath} B \}$. We make the elementary observation that, if the field is positively-correlated, \begin{equation}
\label{e:eta1eta2}
   \P_{\ell_c}[ A_2(r, R)] \le \P_{\ell_c}[A_1(r, R)]^2 .     \end{equation} 
   To see this, note that by the FKG inequality
\[ \mathbb{P}_\ell[ A_2(r,R) ] \le   \mathbb{P}_\ell[ A_1(r,R) ] \, \mathbb{P}_\ell[  \{ \text{there exists a path in $\{ f \le 0 \}$ that intersects $\Lambda_r$ and $\partial \Lambda_R$} \}] . \]
Moreover, by self-duality at $\ell_c = 0$,
 \[    \mathbb{P}_{\ell_c}[  \{ \text{there exists a path in  $\{f \le 0\}$ that intersects $\Lambda_r$ and $\partial \Lambda_R$} \}] =  \mathbb{P}_{\ell_c}[ A_1(r,R) ]  ,\]
 from which \eqref{e:eta1eta2} follows.
  
\smallskip
We now state three intermediate propositions which together will prove our results. The bound $\eta_1 \le d/3$ follows by combining the following inequality with the mean-field bound \eqref{a:mflb}:

\begin{proposition}
\label{p:ubgf1}
For $\ell \le \ell'$ and $R \ge r \ge 1$,
\[ \mathbb{P}_{\ell'}[ A_1(R)] - \mathbb{P}_\ell[ A_1(R)]   \le \frac{r^{d/2}(\ell'-\ell)}{\int q}  \sqrt{  \mathbb{P}_{\ell'}[A_1(R)]   \! \! \sum_{v \in r \Z^d \cap \Lambda_{R+2r}} \! \! \P_\ell[ \Lambda_1 \longleftrightarrow v + \Lambda_{2r} ]}  . \]
\end{proposition}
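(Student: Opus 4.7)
The plan is to mimic the proof of Proposition~\ref{p:ubb1}, discretising the white noise $W$ over a cubic tiling of scale $r$ and applying the relative entropy framework of Proposition~\ref{p:genub}. First I partition $\R^d$ into cubes $C_v := v + [0,r]^d$ for $v \in r\Z^d$ and set $W_v := W \cdot \one_{C_v}$, so that $W = \sum_v W_v$ with the $W_v$ independent. Because $q$ is supported in $\Lambda_r$, each $q\star W_v$ is supported in $v+\Lambda_{2r}$, so $f$ restricted to any bounded region is determined by the finitely many $W_v$ whose cubes lie within a fixed neighbourhood of that region. The key observation for the entropy calculation is that changing the level from $\ell$ to $\ell'$ is equivalent to translating $W$ by the constant $c := (\ell'-\ell)/\int q$, since $q \star c = c\int q = \ell'-\ell$. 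Restricted to $C_v$, this is a Cameron--Martin shift of $W_v$ by $c\one_{C_v}$, so the KL divergence between the laws of $W_v$ under $\P_\ell$ and under $\P_{\ell'}$ equals $\tfrac12 \|c\one_{C_v}\|_{L^2}^2 = (\ell'-\ell)^2 r^d / (2(\int q)^2)$.

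Next I would construct an exploration algorithm determining $A_1(1,R)$ by sequentially revealing $W_v$'s in BFS order, starting from the cubes needed to determine $f$ on $\Lambda_1$. At each step the revealed cubes determine $f$ on some inner union of sub-cubes; whenever the currently visible $\{f\ge 0\}$-cluster of $\Lambda_1$ reaches the boundary of this region, the next layer of adjacent cubes is revealed. The algorithm terminates either when the cluster is certified to reach $\partial\Lambda_R$ or when it is entirely contained in $\Lambda_{R+2r}$ with no unexplored neighbour remaining. By construction only $v \in r\Z^d \cap \Lambda_{R+2r}$ can ever be revealed, and the revealment of $W_v$ under $\P_\ell$ is bounded (up to an absolute constant that can be absorbed into the $6r$) by $\P_\ell[\Lambda_1 \longleftrightarrow v+\Lambda_{6r}]$, the enlargement $\Lambda_{6r}$ absorbing both the range of $q$ and the thickness of the BFS frontier (analogously to Lemma~\ref{l:algo} in the Bernoulli case).

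Finally, applying the chain rule for KL divergence (Lemma~\ref{l:kl}, but now for i.i.d.\ restrictions of white noise in place of i.i.d.\ Bernoullis) to the stopped sequence of revealed $W_v$'s gives a total relative entropy of $\E_\ell[N]\cdot (\ell'-\ell)^2 r^d/(2(\int q)^2)$, where $N$ is the number of revealed cubes. Combining with the Pinsker-type bound of Lemma~\ref{l:pinsker}, and using monotonicity of $A_1(1,R)$ in $\ell$ to replace the maximum by $\P_{\ell'}[A_1(1,R)]$, the factor $\sqrt{2}$ from Pinsker cancels the $1/2$ from the per-cube KL to produce exactly the prefactor $r^{d/2}(\ell'-\ell)/\int q$; the bound $\E_\ell[N]\le \sum_{v \in r\Z^d \cap \Lambda_{R+2r}}\P_\ell[\Lambda_1\longleftrightarrow v+\Lambda_{6r}]$ then gives the stated inequality.

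The hardest step I expect is the precise formulation and verification of the exploration algorithm and its revealment bound: unlike in the Bernoulli case where each coordinate is atomic, each $W_v$ is an infinite-dimensional Gaussian object and the dependence of $f$ on $W_v$ is genuinely non-local, so care is needed to define the BFS so that (i) it is well-defined as an adapted procedure, (ii) it terminates almost surely, and (iii) the revealment factors cleanly through the connection event $\{\Lambda_1 \longleftrightarrow v+\Lambda_{6r}\}$. A secondary, more routine task is to verify that Lemmas~\ref{l:kl} and~\ref{l:pinsker} extend to stopped sequences of continuous, Gaussian-valued coordinates, which goes through using the general chain-rule and contraction properties of KL divergence together with a standard approximation (e.g.\ via finite-dimensional projections of each $W_v$).
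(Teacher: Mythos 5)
Your proposal is correct and follows essentially the same path as the paper: an exploration algorithm over an $r$-tiling of $\R^d$ that determines $A_1(1,R)$ with revealments bounded by $\P_\ell[\Lambda_1\longleftrightarrow v+\Lambda_{6r}]$ (this is exactly the algorithm of Lemma~\ref{algo2}), combined with the relative-entropy/Pinsker machinery of Section~\ref{s:bp}, realised as the Gaussian analogue Proposition~\ref{p:genubgau}. Your numerology is right: the per-box KL cost $\tfrac12(\ell'-\ell)^2 r^d/(\int q)^2$ and the cancellation of Pinsker's $\sqrt{2}$ against the $\tfrac12$ both match, and the monotonicity of $A_1(1,R)$ replaces the $\max$ by $\P_{\ell'}[A_1(1,R)]$.

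The one place you deviate slightly in formulation is the treatment of the coordinates. You take each full restriction $W_v$ as a coordinate (an infinite-dimensional Gaussian) and invoke Cameron--Martin for the shift by $c\one_{C_v}$, then remark that Lemma~\ref{l:kl} must be extended to such coordinates via finite-dimensional projections. The paper sidesteps this by first performing the orthogonal decomposition $f_S \stackrel{d}{=} Z_S (q\star\id_S)/s^{d/2} + g_S$ (Proposition~\ref{p:odecom}), putting the scalar projections $Z_S$ for revealed $S$ into the ``stopped sequence'' and stuffing everything else — the $g_S$ for all $S$ and the untouched $Z_S$ — into the conditioning variable $W'$. This is exactly legitimate because the level shift moves only the $Z_S$-component; it makes Lemma~\ref{l:kl} apply verbatim to a real-valued sequence of i.i.d.\ standard normals and avoids any approximation argument. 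Your route works too (the shift lives in a one-dimensional subspace of each $L^2(C_v)$, so the KL and chain rule really do reduce to the scalar case by the contraction property), but the paper's reformulation is the cleaner way to make the step rigorous and is worth adopting.
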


\noindent For the bound $\eta_1 \le 1/3$ in the self-dual case we rely instead on the following inequalities. Recall the Dini derivatives in \eqref{e:dini}.

\begin{proposition}
\label{p:ubgf2}
For every $k \ge 1$ there exists $c = c(k)>0$ such that, for $\ell \in \mathbb{R}$ and $R \ge 4r > 0$,
\[ \frac{d^+}{d \ell} \mathbb{P}_\ell[ \textrm{Cross}_k(R)] \leq  \frac{c   R^{d/2}}{\int q} \begin{cases}    \sqrt{ \P_\ell[A_2(2r,R-2r)] }  &  \text{ if and $d=2$,} \\ 
   \sqrt{ \P_\ell[A_1(2r,R-2r)] } &   \text{ in general.}\end{cases}\]
\end{proposition}

\begin{proposition}
\label{p:lbgf}
Suppose $f$ is continuous, $d=2$, and \eqref{a:pos2} holds. Then for every $k \ge 1$ there exists $c = c(k)>0$ such that, for $\ell \in \R$ and $R \ge 8r > 0$,
\[  \frac{d^-}{d \ell} \mathbb{P}_\ell[ \textrm{Cross}_k(R)] \geq   \frac{c}{\|q\|_2} \frac{ \P_\ell[\textrm{Cross}_{1/(8k)}(kR)]^4 \big(1 - \P_\ell[\textrm{Cross}_{8k}(R/8)] \big)^2 }{  \P_\ell[A_2(2r,R-2r)]} . \]
\end{proposition}


\noindent We prove Propositions \ref{p:ubgf1}--\ref{p:lbgf} later in the section; for now we establish Theorems \ref{t:bou} and~\ref{t:oab2}. 

\smallskip
Let us first state some auxiliary results on general $f = q \star W$ satisfying Assumption \ref{a:gf}; these are rather standard, especially for finite-range models, but we give details on their proof at the end of the section. 

\begin{lemma}
\label{l:classicalgf} 
Suppose $f$ satisfies Assumption \ref{a:gf}. 
\begin{enumerate}
\item There exists $\delta > 0$ and $\ell' = \ell'(R) \le \ell_c$ such that, for $R \ge 1$,
\[ \mathbb{P}_{\ell'}[\textrm{Cross}_5(R)] =  \delta .  \]
\item (RSW estimates) Suppose that $f$ is continuous, $d=2$, and \eqref{a:pos2} holds. Then for every $k \ge 1$ there exists $\delta > 0$ such that, for $R \ge 1$,
\[ \P_{\ell_c}[\textit{Cross}_k(R)] \in (\delta, 1-\delta) .\]
\item For every $k \ge 1$ and $R \ge r > 0$, $\textrm{Cross}_k(R)$ and $A_1(r, R)$ are continuity events.
\end{enumerate}
\end{lemma} 

\begin{proof}[Proof of Theorem \ref{t:bou}]
In the proof $c > 0$ are constants that depend only on $f$ and may change from line to line. Without loss of generality we assume that $r \ge 1$.

We begin by establishing the bound $\eta_1 \le d/3$. We may assume that $\mathbb{P}_{\ell_c}[ A_1(R) ]  \to 0$ as $R \to \infty$ since otherwise $\eta_1 = 0$. Define, for sufficiently large $R$,
\begin{equation}
    \label{e:ellprime}
 \ell'(R) = \inf\{ \ell > \ell_c :  \P_\ell[A_1( R)] = 2 \P_{\ell_c}[A_1( R) ] \} \in [\ell_c,\infty) ,
 \end{equation}
which exists by the continuity of $\ell \mapsto \P_\ell[A_2(R)]$ (third item of Lemma \ref{l:classicalgf}), and since $\P_{\ell_c}[A_1( R) ] > 0$ and
\[ \P_\ell[A_1( R)]  \ge  \P\big[ \inf_{x \in \Lambda_R} f(x)  \ge -\ell \big] \to 1   \]
as $\ell \to \infty$. Note that $\ell'(R) \to \ell_c$ as $R \to \infty$,  since if instead $\limsup_{R \to \infty}\ell'(R) > \ell'' > \ell_c$ then
 \[ \limsup_{R \to \infty} \P_{\ell_c}[A_1(R)]  \ge \limsup_{R \to \infty} \P_{\ell'}[A_1(R)]/2 \ge \theta(\ell'')/2 > 0  .\]

By the mean-field bound \eqref{a:mflb}, for sufficiently large $R$,
\begin{align}
\label{e:mfbuse}
   \P_{\ell_c}[A_1(R)]/2 =  \P_{\ell'}[A_1(R)]/4 \ge \theta(\ell')/4 \ge c(\ell'-\ell_c)  . 
\end{align}
Now apply Proposition \ref{p:ubgf1} to $\ell = \ell_c$ and $\ell'$; this yields
\[  \mathbb{P}_{\ell_c} [A_1(R) ]  =   \mathbb{P}_{\ell'} [A_1(R) ]  - \mathbb{P}_{\ell_c} [A_1(R) ]   \le  c (\ell'-\ell_c)  \sqrt{ \mathbb{P}_{\ell_c}[A_1(R)]   \! \! \sum_{v \in  r\Z^d \cap \Lambda_{R+2r}} \! \! \P_{\ell_c}[ \Lambda_1 \longleftrightarrow v + \Lambda_{2r} ]}   \]
 for large $R$. Combining with \eqref{e:mfbuse}, we deduce that
\begin{equation}
\label{e:mainineq}
\mathbb{P}_{\ell_c}[A_1(R)]    \! \! \sum_{v \in  r \Z^d \cap \Lambda_{R+2r}} \! \! \P_{\ell_c}[ \Lambda_1 \longleftrightarrow v + \Lambda_{2r} ]    \ge c   
\end{equation}
 for all $R \ge 1$. 
 
\smallskip
We now show that $\eta_1 \le d/3$ follows from \eqref{e:mainineq}. If $\eta_1 = 0$ there is nothing to prove, so assume $\eta_1 > 0$ and fix $\eta^\ast \in (0, \eta_1)$. Then by the definition of $\eta_1$
\begin{equation}
\label{e:etaast}
\mathbb{P}_{\ell_c}[ A_1(R) ]  \le R^{- \eta^\ast}  
\end{equation}
for large $R$. Observe next that, for $|x|_\infty \ge 18 r $, the event $\Lambda_1 \longleftrightarrow x + \Lambda_{2r}$ implies the occurrence of the events
\[  \{   A_1(|x|_\infty/3)  \}   \qquad \text{and} \qquad \{  x + A_1(6r, |x|_\infty/3)  \}  , \]
which are measurable with respect to disjoint domains separated by sup-distance at least $6r$. Since $q$ is supported on $\Lambda_r$, these events are independent. Hence for large $R$ and $|x|_\infty \ge 18r$ this implies
\begin{align*}
   \P_{\ell_c}[   \Lambda_1 \longleftrightarrow x + \Lambda_{2r} ]    \le \P_{\ell_c}[  A_1( |x|_\infty/3) ] \P_{\ell_c}[  A_1(6r, |x|_\infty/3) ] \le c    |x|_\infty^{-2\eta^\ast}   
   \end{align*}
where we used the union bound and  \eqref{e:etaast} in the second inequality. Then, for large $R$, by an integral comparison
  \begin{align*}
   \sum_{v \in  r\Z^d \cap \Lambda_{R+2r}} \! \! \P_{\ell_c}[   \Lambda_1 \longleftrightarrow v + \Lambda_{2r} ]  \le  c  + c \sum_{v \in  r\Z^d \cap \Lambda_{R+2r} \setminus \{0\} } \! \!   |v|_\infty^{-2\eta^\ast}   \le c + c    \max\{  R^{d-2 \eta^\ast } (\log R) ) , 1\}   
    \end{align*}
    Combining with \eqref{e:mainineq}, for large $R$,
    \begin{equation}
        \label{e:eta1con}
    c \le   R^{- \eta^\ast}   \Big( 1 +   \max\{  R^{d-2 \eta^\ast } (\log R ) , 1\}   \Big).   
     \end{equation}
    This implies that $\eta^{\ast} \le d/3$, and since $\eta^\ast < \eta_1$ was arbitrary, we deduce that $\eta_1 \le d/3$.

\smallskip
We now turn to the self-dual case. By the RSW estimates in Lemma \ref{l:classicalgf} and Propositions \ref{p:ubgf2} and \ref{p:lbgf}, for large $R$,
\[  c  \mathbb{P}_{\ell_c}[ A_2(2r,R-2r )]^{-1}   \le  \frac{d}{d \ell} \mathbb{P}_\ell[  \textrm{Cross}_1(R)] \Big|_{\ell= \ell_c}    \le   c  R \sqrt{\mathbb{P}_{\ell_c}[ A_2(2r,R-2r ) ]}  \]
which gives $ \mathbb{P}_{\ell_c}[ A_2(2r, R-2r )]   \ge  c R^{-2/3}$ for large $R$. Applying the union bound, we deduce that
\[  \mathbb{P}_{\ell_c}[ A_2( R )]   \ge  c R^{-2/3} \]
which, together with \eqref{e:eta1eta2}, gives the result.
\end{proof}

\begin{proof}[Proof of Theorem \ref{t:oab2}]
Recall that we only consider the case $\X = \Z$ and $q(x) = \id_{x=0}$, so that we may take $r = 1$. In the proof $c > 0$ are constants that depend only on the dimension and may change from line to line, and $o(1)$ denotes a quantity that decays to zero as $R \to \infty$. 

\smallskip We begin with the bounds $\eta_1 \le d / (2/\beta+1)$ and $\bar{\eta}_1 \le (2-\eta)/(2/\beta-1)$ which require only a slight change to the argument used to prove $\eta_1 \le d/3$ above. Let $\ell'(R) \to \ell_c$ be defined as in~\eqref{e:ellprime}. By the definition of the exponent $\beta$, one can replace \eqref{e:mfbuse} with 
\[   \P_{\ell_c}[A_1(R)] \ge \theta(\ell')/2 \ge c(\ell'-\ell_c)^{\beta + o(1)} , \]
 which gives, in place of \eqref{e:mainineq},
\begin{equation}
\label{e:mainineq2}
  \mathbb{P}_{\ell_c}[A_1(R)]^{\frac{2}{\beta}- 1 + o(1)}   \! \! \sum_{v \in  \Z^d \cap \Lambda_{R+2}} \! \! \P_{\ell_c}[ \Lambda_1 \longleftrightarrow v + \Lambda_2 ]    \ge c  .
 \end{equation}
Then in place of \eqref{e:eta1con} we have, for any $\eta^\ast \in (0, \eta_1)$ and large $R$ 
\[ c \le R^{-\eta^\ast (\frac{2}{\beta}- 1) + o(1) } \Big( 1 +  \max\{ R^{d-2\eta^\ast} (\log R), 1\} \Big)    ,    \]
which implies $ \eta_1 \le d / (2/\beta+1)$. On the other hand, by the definition of the exponent $\eta$,
\[ \sum_{v \in \Lambda_R} \P_{\ell_c}[ 0 \longleftrightarrow v ]   = R^{2 - \eta + o(1) } , \]
which by \eqref{e:mainineq2} and the union bound implies 
\[  \mathbb{P}_{\ell_c}[A_1(R)] \ge R^{-(2-\eta)/(2/\beta-1)  + o(1) }  \]
and hence $\bar{\eta}_1 \le (2-\eta)/(2/\beta-1)$. 

\smallskip
To prove the remaining bounds we use the fact that, by a super-multiplicativity argument (see \cite[Section 6.2]{gri99}),\begin{equation}
\label{e:tpexp}
 \mathbb{P}_\ell[ 0 \longleftrightarrow v ] \le e^{-  |v|_\infty /\xi(p) }  
 \end{equation}
for all $\ell < \ell_c$ and $v \in \Z^d$. We also recall the standard facts \cite[Theorem 6.14]{gri99} that $\xi(\ell)$ is continuous, strictly increasing, and $\xi(\ell)\to \infty$ as $\ell \uparrow \ell_c$.

\smallskip
Let $C > 0$ be a constant to be fixed later, and for $R$ sufficiently large, let $\ell'' = \ell''(R) \uparrow \ell_c$ be such that $R = C \xi(\ell'') \log R$. Since we have the a priori bound $\P_{\ell_c}[A_1(R)] \ge c R^{-(d-1)/2}$ \cite{ham57,tas87}, we can take $C > 0$ sufficiently large so that, by \eqref{e:tpexp} and the union bound, 
\[  \mathbb{P}_{\ell''}[A_1(R) ]  \le c R^{d-1} R^{-C}  \le  \P_{\ell_c}[A_1(R)] / 2  \]
for large $R$.  Then applying Proposition \ref{p:ubgf1} to $\ell = \ell''$ and $\ell' = \ell_c$ gives, for large $R$,
\[  \mathbb{P}_{\ell_c} [A_1(R) ] /2  \le   \mathbb{P}_{\ell_c} [A_1(R) ]  - \mathbb{P}_{\ell''} [A_1(R) ]   \le  c (\ell_c-\ell'') \sqrt{ \mathbb{P}_{\ell_c}[A_1(R)]  \! \! \sum_{v \in \Z^d \cap \Lambda_{R+2}} \! \! \P_{\ell''}[ \Lambda_1 \longleftrightarrow v + \Lambda_2 ]}   \]
or, equivalently,
\begin{equation}
\label{e:mainineq3}
\P_{\ell_c}[A_1(R)]  \le c (\ell_c-\ell'')^2  \mathbb{P}_{\ell_c}[A_1(R)]  \! \! \sum_{v \in \Z^d \cap \Lambda_{R+2}} \! \! \P_{\ell''}[ \Lambda_1 \longleftrightarrow v + \Lambda_2 ].
\end{equation}
Since $ \sum_{v \in \Z^d \cap \Lambda_{R+2}} \! \! \P_{\ell''}[ \Lambda_1 \longleftrightarrow v + \Lambda_2 ] \le c \chi(\ell'')$, and by the definition of the exponents $\nu$ and $\gamma$, this implies
\[\P_{\ell_c}[A_1(R)]  \le c  (\ell_c-\ell'')^2  \chi(p')  \le c     \xi(\ell'')^{-2/\nu + o(1) } \xi(\ell'')^{\gamma/\nu + o(1)} = R^{-(2-\gamma)/\nu + o(1)}   \]
for large $R$, which implies $\eta_1 \ge (2-\gamma)/\nu$.

\smallskip
Finally, let $\delta > 0$ be such that $\mathbb{P}_{\ell_c}[\textrm{Cross}_5(R)]  \ge  \delta$ for large $R$ (possible by the first statement of Lemma~\ref{l:classicalgf}), and again let $\ell'' = \ell''(R) \uparrow \ell_c$ be such that $R = C \xi(\ell'') \log \xi(\ell'')$. Then
\[  \mathbb{P}_{p'}[\textrm{Cross}_5(R) ] \le  \delta/2 \]
for large $R$, and we deduce that there exists $\ell''' \in (\ell'',\ell_c)$ such that
\[    \frac{d}{d \ell} \mathbb{P}_\ell[ \textrm{Cross}_5(R)] \Big|_{\ell= \ell'''}  \ge \frac{\delta/2}{\ell_c-\ell''} .   \]
On the other hand, by Proposition \ref{p:ubgf2} and monotonicity in $\ell$,
\[  \frac{d}{d \ell} \mathbb{P}_\ell[  \textrm{Cross}_5(R) ] \Big|_{\ell= \ell'''} \le   c  R^{d/2} \sqrt{ \P_{\ell_c}[A_1(R)] } \]
and hence
\[     (\ell_c-\ell'')^2 R^d \, \P_{\ell_c}[A_1(R)]     \ge  c  \]
for large $R$. By the definition of the exponent $\nu$, this implies
\[  \xi(\ell'')^{-2/\nu + o(1)} R^d \P_{\ell_c}[A_1(R)]  =  R^{d-2/\nu + o(1)}  \P_{\ell_c}[A_1(R)]    \ge c \]
for large $R$, which implies that $\bar{\eta}_1 \le d - 2/\nu$.
\end{proof}

\begin{remark}
\label{r:high2}
As mentioned in Remark \ref{r:high}, by combining the high-dimensional bounds~\cite{har90, har08}
\[   \xi(\ell) \le c(\ell_c-\ell)^{-1/2}   \qquad \text{and} \qquad \P_{\ell_c}[ 0 \longleftrightarrow v ] \le c |v|_{\infty}^{-d+2}   \]
with \eqref{e:mainineq} and \eqref{e:mainineq3}, one arrives at a quantitative version of Corollary \ref{c:oab}, namely the bounds
\[  c_1 R^{-2}  \le \P_{\ell_c}[A_1(R)] \le c_2 R^{-2} (\log R)^4. \]
\end{remark}

\subsection{Randomised algorithms}

To prove Propositions \ref{p:ubgf1}--\ref{p:lbgf} we work with well-chosen randomised algorithms that determine the events $A_1(R)$ and $\textrm{Cross}_k(R)$.
 
 \smallskip
 Recall the box $B_k(R) = [-R,R] \times [-kR,kR]^{d-1} \subset \X^d$, and define its \textit{right half} $B_k^+(R) := [0, R] \times [-kR, kR]^{d-1}  \subset \X^d$. If $d=2$, define its \textit{top-right quarter} $B_k^\dagger(R) := [0, R] \times [0,kR]  \subset \X^d$. We denote by $d_\infty(A,B)$ the distance between $A,B \subset \X^d$ with respect to the sup-norm.

\begin{lemma}\label{l:algo}
For every $\ell \in \R$ and $R \ge r$ there is an algorithm in $\mathcal{A}_r$ determining $A_1(R)$ such that, under $\P_\ell$,
\[    \sum_{S \in \mathcal{S}_r} \textrm{Rev}(S) \le  \sum_{v \in r \Z^d \cap \Lambda_{R+2r}} \! \! \P_\ell[ \Lambda_1 \longleftrightarrow v + \Lambda_{2r} ] .  \]
Moreover for every $k \ge 1$, $\ell \in \R$, and $ R \ge 4r > 0$, there are algorithms in $\mathcal{A}_r$ determining $\textrm{Cross}_k(R)$ such that, under $\mathbb{P}_\ell$, these algorithms satisfy respectively
\[ \max_{S \in \mathcal{S}_r  : d_\infty(S, B_k^+(R)) \le r }  \! \! \textrm{Rev}(S) \le \P_\ell[A_1(2r, R-2r)]  ,\]
and, if $d=2$,
\[  \max_{S \in \mathcal{S}_r  : d_\infty(S,B_k^\dagger(R) ) \le r}  \! \! \textrm{Rev}(S) \le  \P_\ell[A_2(2r, R-2r)] . \]
\end{lemma}

\noindent The algorithms in Lemma \ref{l:algo} are classical for Bernoulli percolation, however since we work in a rather more general setting we provide details at the end of the section.

\subsection{Proof of Propositions \ref{p:ubgf1}--\ref{p:ubgf2}}

To prove Propositions \ref{p:ubgf1}--\ref{p:ubgf2} we combine the entropic bound with the algorithms in Lemma \ref{l:algo}:

\begin{proof}[Proof of Proposition \ref{p:ubgf1}]
This follows directly from Proposition \ref{p:entbound} by considering the algorithm in Lemma~\ref{l:algo} that determines $A_1(R)$ such that  
\begin{equation*}  \E_\ell [N]  =   \sum_{S \in \mathcal{S}_r} \textrm{Rev}(S) \le  \sum_{v \in r \Z^d \cap \Lambda_{R+2r}} \! \! \P_\ell[ \Lambda_1 \longleftrightarrow v + \Lambda_{2r} ] . \qedhere
\end{equation*}
\end{proof}

\begin{proof}[Proof of Proposition \ref{p:ubgf2}]
We begin with the general case. We first partition the set of boxes $\{ S \in \mathcal{S}_r :  d_\infty(S, B_k(R)) \le r \}$ that cover $B_k(R)$ into the disjoint sets  
\[ \mathcal{S}'_1 = \{ S \in \mathcal{S}_r :  d_\infty(S, B_k^+(R)) \le r \} \qquad \text{and} \qquad \mathcal{S}'_2= \{ S \in \mathcal{S}_r :  d_\infty(S, B_k(R)) \le r \} \setminus \mathcal{S}'_1 \]
Note that $\mathcal{S}'_1 $ and $ \mathcal{S}'_2$ correspond roughly to boxes which cover, respectively, the right-half $B_k^+(R)$ and its complement $B_k(R) \setminus B_k^+(R)$, except that we enforce disjointness (see Remark \ref{r:disjoint} for an explanation) so we do not have exact reflective symmetry. However the reflection of $\mathcal{S}'_2$ in the hyperplane $\{0\} \times \R^{d-1}$ is contained in $\mathcal{S}'_1$.

\smallskip
By disjointness and since $q$ is supported on $\Lambda_r$, for every $x \in B_k(R)$ we have
\[   \sum_{i = 1,2} \sum_{S \in \mathcal{S}'_i} (q \star \id_S)(x)  =    \sum_{S \in \mathcal{S}'_1 \cup \mathcal{S}'_2 } (q \star \id_S)(x)  =   (q \star \id)(x)   =  \int q .  \]
Then by the multivariate chain rule for Dini derivatives 
\begin{align*}
\frac{\partial^+}{\partial \ell} \P_\ell \big[\textrm{Cross}_k(R) \big] = &   \frac{1}{\int q}  \frac{\partial^+}{\partial \varepsilon}  \P_\ell \big[f+   \varepsilon  \sum_{i = 1,2} \sum_{S \in \mathcal{S}'_i}  q \star \id_S   \in \textrm{Cross}_k(R) \big] \Big|_{\varepsilon = 0}  \\
&  \le \frac{1}{\int q} \sum_{i=1,2}   \frac{\partial^+}{\partial \varepsilon}  \P_\ell \big[f+   \varepsilon \sum_{S \in \mathcal{S}'_i  } q \star \id_S   \in \textrm{Cross}_k(R) \big] \Big|_{\varepsilon = 0}   .
\end{align*}
Now consider the algorithm in Lemma \ref{l:algo} that determines $\textrm{Cross}_k(R)$ such that, under $\P_\ell$,
\[  \max_{S \in \mathcal{S}'_1 }   \textrm{Rev}(S) \le \P_\ell[A_1(2r, R-2r)]  .   \]
By reflective symmetry, there is also an algorithm determining  $\textrm{Cross}_k(R)$  such that, under $\P_\ell$,
\[  \max_{S \in \mathcal{S}_2'}   \textrm{Rev}(S) \le \P_\ell[A_1(2r, R-2r)]  .   \]
Since also $\max_{i=1,2} |\mathcal{S}'_i|  \le c_1 (R/ r)^d$ for some $c_1  > 0$ depending only on $k$ and $d$, applying Proposition \ref{p:entbound2} gives
\[ \frac{\partial^+}{\partial \ell} \P_\ell \big[\textrm{Cross}_k(R) \big]   \le  \frac{r^{d/2}}{2 \int q}  \sqrt{c_1 (R/r)^d   \P_\ell[A_1(2r, R-2r)]  }  =   \frac{c_2 R^{d/2} }{\int q}  \sqrt{ \P_\ell[A_1(2r, R-2r)]  }  \]
for some $c_2 = c_2(k,d) > 0$, as required.

\smallskip
For $d=2$ we consider the top-right quadrant $B_k^\dagger(R)$ and the algorithm in Lemma \ref{l:algo} that determines $\textrm{Cross}_k(R)$ such that
\[  \max_{ \{ S \in \mathcal{S}_r :  d_\infty(S, B_k^\dagger(R)) \le r \} }   \textrm{Rev}(S) \le \P_\ell[A_2(2r, R-2r)]  .   \]
Then a similar argument to in the previous case (except partitioning $\{ S \in \mathcal{S}_r :  d_\infty(S, B_k(R)) \le r \}$ into $\cup_{i = 1,\ldots,4} \mathcal{S}'_i$ into four disjoint sets that approximate the four quadrants of $B_k^+(R)$ and using reflective symmetry in both axes) yields the result.
\end{proof}

\begin{remark}
\label{r:disjoint}
Since we do not assume $q \ge 0$, it is not necessarily true that
\[    \frac{\partial^+}{\partial \varepsilon}  \P_\ell \big[f+   \varepsilon  q \star \id_S   \in \textrm{Cross}_k(R) \big]  \ge 0 \]
for every $S \in \mathcal{S}_r$. Hence in the proof of Proposition \ref{p:ubgf2} it was crucial that we partitioned $\{ S \in \mathcal{S}_r :  d_\infty(S, B_k(R)) \le r \}$ disjointly into $\cup_i \mathcal{S}'_i$, since otherwise we could not deduce that
\[ \frac{\partial^+}{\partial \ell} \P_\ell \big[\textrm{Cross}_k(R) \big] \le   \frac{1}{\int q}  \frac{\partial^+}{\partial \varepsilon}  \P_\ell \big[f+   \varepsilon  \sum_{i=1,2} \sum_{S \in \mathcal{S}'_i}  q \star \id_S   \in \textrm{Cross}_k(R) \big] \Big|_{\varepsilon = 0}  . \]
\end{remark}

\subsection{Proof of Proposition \ref{p:lbgf}}

Before proving Proposition \ref{p:lbgf} we first state a generalisation, which follows by combining our Russo-type inequality in Proposition \ref{p:russo} with (a conditional version of) the OSSS inequality.
 
 \begin{proposition}
\label{p:lbderiv}
There exists a $c > 0$ depending only on $d$ such that, for every $\ell \in \R$, increasing compactly supported continuity event $A$, $s > 0$, algorithm $\mathcal{A} \in \mathcal{A}_s$ that determines~$A$, and subset $\mathcal{S}' \subseteq\mathcal{S}_s$,
\begin{equation}
\label{e:lbderiv}
\frac{d^-}{d \ell} \mathbb{P}_\ell[A]   \ge \frac{c \min\{1,(s/r)^d \}}{\|q\|_2} \frac{\textrm{Var}_\ell [ \mathbb{P}_\ell[A | \mathcal{F}_{\mathcal{S}' } ] ] }{\max_{S \in \mathcal{S}' } \textrm{Rev}(S)} ,
\end{equation}
where $\mathcal{F}_{\mathcal{S}'}$ denotes the $\sigma$-algebra generated by $(f_S)_{S \in \mathcal{S}'}$, and the revealments $\textrm{Rev}(S)$ are under~$\P_\ell$.
\end{proposition}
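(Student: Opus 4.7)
The strategy is to follow the proof of Proposition \ref{p:genlb} essentially verbatim, with the OSSS inequality (Theorem \ref{OSSS}) applied to the independent decomposition $f = \sum_{S \in \mathcal{S}_s} f_S$ in place of its analogue on Bernoulli variables, and with the Gaussian Russo-type inequality to be established in Section \ref{s:osss} playing the role of the classical Russo formula used at the end of the proof of Proposition \ref{p:genlb}.

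Concretely, I would first apply OSSS to $X = (X_0, (f_S)_{S \in \mathcal{S}'})$, where $X_0$ bundles all the remaining components $(f_S)_{S \notin \mathcal{S}'}$, using the given algorithm $\mathcal{A}$ (which has revealment at most $1$ for $X_0$ and revealment $\rev(S)$ for each $S \in \mathcal{S}'$). Then, by the same law-of-total-variance manipulation that produced \eqref{e:OSSSext} in the Bernoulli setting --- identifying $\Infl(X_0)/2$ with $\mathbb{E}_\ell[\Var_\ell[\id_A \mid \mathcal{F}_{\mathcal{S}'}]]$ --- one obtains
\[ \Var_\ell \big[ \mathbb{P}_\ell[A \mid \mathcal{F}_{\mathcal{S}'}] \big] \ \le \ \frac{\max_{S \in \mathcal{S}'} \rev(S)}{2} \sum_{S \in \mathcal{S}'} \Infl(S), \]
where $\Infl(S) := \mathbb{P}_\ell[\id_A(f) \neq \id_A(f^{(S)})]$ and $f^{(S)}$ denotes $f$ with the component $f_S$ resampled independently.

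The key remaining step is to bound $\sum_{S \in \mathcal{S}'} \Infl(S)$ above by $\frac{d^-}{d \ell} \mathbb{P}_\ell[A]$ up to a factor of order $\|q\|_2 / \min\{1,(s/r)^d\}$. For this I would appeal to the Gaussian Russo-type inequality announced in Section \ref{s:osss}. Heuristically, using the orthogonal decomposition $f_S \stackrel{d}{=} s^{-d/2} Z_S (q \star \id_S) + g_S$ from Proposition \ref{p:odecom}, each $\Infl(S)$ should be controllable by a one-dimensional directional derivative of $t \mapsto \mathbb{P}[f + t (q \star \id_S) \in A]$ at $t=0$, with proportionality constant of order $s^{d/2}/\|q \star \id_S\|_2$ (the inverse of the Gaussian scale of the $Z_S$-contribution to $f$). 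Summing over $S$ and exploiting the pointwise identity $\sum_{S \in \mathcal{S}_s} (q \star \id_S)(x) \equiv \int q$ then converts the sum of directional derivatives into the derivative along the constant direction, which is $\frac{d}{d\ell} \mathbb{P}_\ell[A]$; combining with the OSSS bound above yields \eqref{e:lbderiv}.

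The main obstacle is establishing this Russo-type inequality cleanly, and in particular without assuming $q \ge 0$: signed cancellations in $q$ make it delicate to pass from bounds involving directional derivatives along $q \star \id_S$ to one involving $\frac{d}{d\ell} \mathbb{P}_\ell[A]$, and they also explain the appearance of $\|q\|_2$ here in contrast to $\int q$ appearing in the upper-bound Proposition \ref{p:ubgf1}. The factor $\min\{1,(s/r)^d\}$ should reflect the two regimes of $\|q \star \id_S\|_2$: when $s \gtrsim r$ the kernel $q$ sits inside a single box and $\|q \star \id_S\|_2 \asymp s^{d/2} |\int q|$, while when $s \lesssim r$ one has $q \star \id_S \approx s^d q$, so $\|q \star \id_S\|_2 \asymp s^d \|q\|_2$, producing the loss $(s/r)^d$ relative to the natural variance scale $\|q\|_2$.
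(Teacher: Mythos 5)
Your proposal matches the paper's proof of Proposition \ref{p:lbderiv} exactly: apply the OSSS inequality to the decomposition $f = \sum_{S \in \mathcal{S}_s} f_S$ together with the law-of-total-variance manipulation from \eqref{e:OSSSext} to bound $\Var_\ell[\P_\ell[A \mid \mathcal{F}_{\mathcal{S}'}]]$ by $\tfrac12 \max_{S \in \mathcal{S}'}\rev(S)\sum_{S \in \mathcal{S}'}\Infl_A(S)$, then invoke Proposition \ref{p:russo} (after the trivial enlargement $\sum_{\mathcal{S}'} \le \sum_{\mathcal{S}_s}$) to control the sum of influences by $\frac{d^-}{d\ell}\P_\ell[A]$. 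Your supplementary heuristic for \emph{why} Proposition \ref{p:russo} should hold differs somewhat in spirit from the paper's (which uses Gaussian isoperimetry and a smooth cutoff $g_S$ rather than reducing to a one-dimensional directional derivative along $q \star \id_S$), but since the argument for \ref{p:lbderiv} itself simply cites \ref{p:russo}, this does not affect the correctness of the proposal.
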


\begin{proof}[Proof of Proposition \ref{p:lbderiv}]
We shall use the following version of the OSSS inequality (which applies to arbitrary compactly supported event $A$)
\begin{equation}
\label{e:OSSSext2}
  \Var_\ell \big[ \P_\ell[A  \: | \: \mathcal{F}_{\mathcal{S}'} ]  \big]  \le  \frac{1}{2} \sum_{S \in \mathcal{S}'} \textrm{Rev}(S) \Infl_{\ell;A}(S) \le    \frac{ \max_{S \in \mathcal{S}'} \textrm{Rev}(S)}{2} \sum_{S \in \mathcal{S}'} \Infl_{\ell;A}(S)
   \end{equation}
   where $\Infl_{\ell;A}(S)$ is defined as in \eqref{e:infl}.  Assuming \eqref{e:OSSSext2}, since $\Infl_{\ell;A}(S) \ge 0$ we have
\[   \sum_{S \in \mathcal{S}_s}   \Infl_{\ell;A}(S)  \ge \sum_{S \in \mathcal{S}'}   \Infl_{\ell;A}(S)  \ge \frac{2 \textrm{Var}_\ell[ \mathbb{P}_\ell[A | \mathcal{F}_{\mathcal{S}' } ] ] }{  \max_{S \in \mathcal{S}'} \textrm{Rev}(S) }  ,  \]
and combining with the Russo-type inequality in Proposition \ref{p:russo} yields the result.
   
 To derive \eqref{e:OSSSext2}, consider modifying the algorithm $\mathcal{A}$ to first reveal all $S \notin \mathcal{S}'$ and then proceed as in its original definition; this modification does not change revealment probabilities for $S \in \mathcal{S}'$. Applying the OSSS inequality \cite{osss05} to the vector $(\sum_{S \notin \mathcal{S}'} f_S, (f_S)_{S \in \mathcal{S}'})$ gives
\[  \Var_\ell[\one_A] \le  \frac{1}{2} \Big( \Infl_{\ell;A}(\mathcal{S'}^c) +   \sum_{S \in \mathcal{S}'} \textrm{Rev}(S) \Infl_{\ell;A}(S) \Big) \]
where $\Infl_{\ell;A}(\mathcal{S'}^c)$ is defined as
\[ \Infl_{\ell;A}( \mathcal{S'}^c) :=   \mathbb{P}_\ell \big[ \id_{\{f \in A\}} \neq \id_{\{f^{(\mathcal{S}^c)} \in A\}} \big] , \]
where $f^{(\mathcal{S}^c)}$ denotes the field $f = \sum_{S \in \mathcal{S}_s} f_S$ with $\sum_{S \notin \mathcal{S}'} f_S$ resampled independently. Since 
\begin{align*}
  \frac12 \Infl_{\ell;A}( \mathcal{S'}^c)  & =  \frac{1}{2} \E_p \big[ \P_p[ \text{the outcome of $A$ changes when $\sum_{S \notin \mathcal{S}' } f_S$ is resampled}  \: | \: \mathcal{F}_{\mathcal{S}'}  ] \big] \\
&=  \E_\ell \big[ \P_\ell[A  \: | \: \mathcal{F}_{\mathcal{S}'}] (1- \P_\ell[A  \: | \: \mathcal{F}_{\mathcal{S}'} ]) \big] = \E_\ell \big[ \Var_\ell[\id_A  \: | \: \mathcal{F}_{\mathcal{S}'} ]  \big] ,
\end{align*}
 combining with the law of total variance,
\[ \Var_\ell[\one_A]  -   \Infl_{\ell;A}( \mathcal{S'}^c) /2  =    \Var_\ell[\one_A]  - \E_\ell \big[ \Var_\ell[\id_A  \: | \: \mathcal{F}_{\mathcal{S}'} ]  \big]  =  \Var_\ell\big[ \P_\ell[A  \: | \: \mathcal{F}_{\mathcal{S}'} ]  \big]  \]
yields \eqref{e:OSSSext2}.
\end{proof}

Let us complete the proof of Proposition \ref{p:lbgf}:

\begin{proof}[Proof of Proposition \ref{p:lbgf}]
Recall the top-right quarter $B_k^\dagger(R)$ and set
\[ \mathcal{S}' = \{ S \in \mathcal{S}_r : d_\infty(S,B_k^\dagger(R)) \le r \} . \]
We claim
 \begin{equation}
 \label{e:var}
   \textrm{Var}_\ell \big[ \P_\ell [ \textrm{Cross}_k(R)  \: | \: \mathcal{F}_{\mathcal{S}'} ] ]  \ge  \P_\ell[  \textrm{Cross}_{1/(8k)}(kR) ]^4 (1- \P_\ell[  \textrm{Cross}_{8k}(R/8) ] )^2   .
   \end{equation}
Assuming \eqref{e:var}, the statement follows by applying Proposition \ref{p:lbderiv} (in the case $s=r$) to the algorithm in Lemma~\ref{l:algo} that determines $\textrm{Cross}_k(R)$ whose revealments on $\mathcal{S}'$ are bounded by $\P_\ell[A_2(2r, R-2r)]$.

\smallskip
To prove \eqref{e:var}, remark first that, for any event $A$ and sub-$\sigma$-algebra~$\mathcal{G}$, 
\begin{align}
\label{e:varbound}
  \textrm{Var} [ \P [ A | \mathcal{G} ] ]   =  \E [   (  \P [ A | \mathcal{G} ]  - \P[A] )^2 ]   & \ge    \sup_{A' \in \mathcal{G}}  \E [   (  \P [ A | \mathcal{G} ]  - \P[A] )^2 \, \id_{A'} ]  \\
  \nonumber & \ge \sup_{A' \in \mathcal{G}} \P[A'] ( \P[A | A' ] - \P[A]   )^2  
  \end{align}
where the second inequality is Jensen's. Hence it is enough to construct an event $A'$, measurable with respect to $\mathcal{F}_{\mathcal{S}'}$, such that $ \textrm{Cross}_k(R)$ becomes substantially more likely if $A'$ occurs (see Figure \ref{f:var} for an illustration).

\smallskip
 Define
\begin{align*}
& A' :=  \Big\{ \{R/4\} \times [R/4,R/2]  \stackrel{[R/4,R] \times [R/4,R/2]}{\longleftrightarrow} \{R\} \times[R/4,R/2]  \Big\} \\
&  \qquad \qquad  \cap  \Big\{ [R/4,R/2] \times \{R/4\}  \stackrel{[R/4,R/2] \times [R/4,kR]}{\longleftrightarrow} [R/4,R/2] \times \{kR\}  \Big\} ,  
\end{align*}
which is measurable with respect to $\mathcal{F}_{\mathcal{S}'}$. By the FKG inequality and symmetry (and an obvious event inclusion), $\P_\ell[A'] \ge \P_\ell[ \textrm{Cross}_{1/(8k)}(kR)]^2$. Define also the events
\begin{align*}
 B_1 & :=   \Big\{ \{-R\} \times [R/2,3R/4]  \stackrel{[-R,R/2] \times [R/2,3R/4]}{\longleftrightarrow} \{R/2\} \times[R/2,3R/4]  \Big\}   \\ 
B_2 &:=    \Big\{ \{3R/4\}  \times [-kR,kR]  \stackrel{[3R/4,R] \times [-kR, kR]}{\longleftrightarrow} \{R\} \times [-kR,kR]  \Big\}  
\end{align*}
which are defined on domains separated by a sup-norm distance at least $r$ (recall that $R \ge 8r$), and are translated copies of, respectively, $\textrm{Cross}_{1/6}(3R/2)$ and $\textrm{Cross}_{8k}(R/8)$. Finally, define
\[ C \! := \! \! \Big\{ \! \{-R\} \times [-kR, kR] \stackrel{B_k(R)}{\longleftrightarrow} \big( \{R/2\} \times [R/2,kR]  \big) \! \cup \! \big( [R/2,R] \times \{R/2\} \big) \! \cup \! \big(  \{R\} \times [-kR, R/2] \big) \!  \Big\}  \]
and observe (i) $\textrm{Cross}_k(R) \subseteq C$, (ii) on $A'$, $\textrm{Cross}_k(R) = C$, and (iii) $B_1 \cap B_2^\compl \subseteq C \setminus \textrm{Cross}_k(R)$. Hence
\begin{align*}
  \P_\ell [ \textrm{Cross}_k(R)  | A' ]  - \P_\ell [ \textrm{Cross}_k(R) ]  & = \P_\ell[C | A' ] - \P_\ell[ \textrm{Cross}_k(R) ]  \ge \P_\ell[C] -  \P_\ell[ \textrm{Cross}_k(R) ]   \\
  & = \P_\ell[ C \setminus \textrm{Cross}_k(R) ]  \ge \P_\ell[B_1 \cap B_2^\compl]   = \P_\ell[B_1] (1-\P_\ell[B_2])  \\
  & \ge \P_\ell[  \textrm{Cross}_{1/(8k)}(kR) ]  \big(1- \P_\ell[  \textrm{Cross}_{8k}(R/8) ] \big) ,
  \end{align*}
where the second step is by the FKG inequality, the penultimate step uses the independence of the events $B_1$ and $B_2$, and the final step is an obvious event inclusion. Applying \eqref{e:varbound} (with $A = \textrm{Cross}_k(R)$ and $\mathcal{G} = \mathcal{F}_{\mathcal{E}'}$) gives~\eqref{e:var}.
\begin{figure}
\centering
\includegraphics[height=6cm]{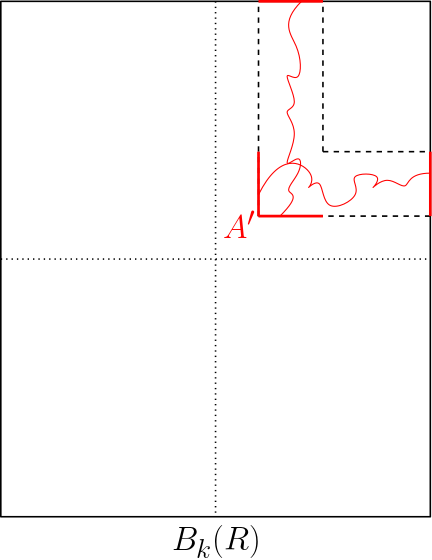}
\includegraphics[height=6cm]{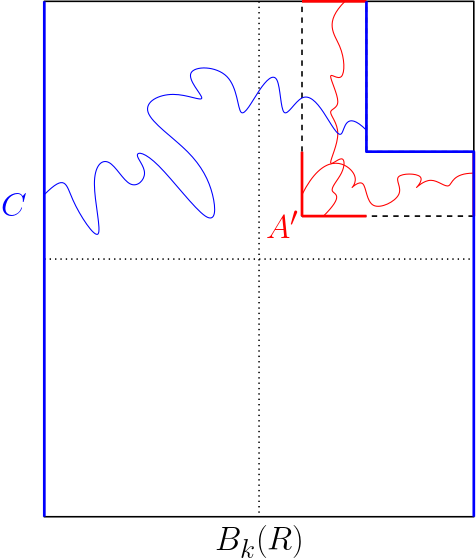}
\includegraphics[height=6cm]{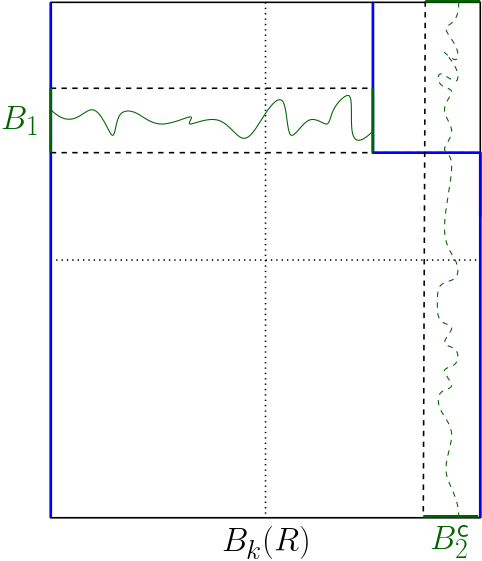}
\caption{An illustration of the proof of \eqref{e:var}. The first panel shows the event~$A'$. The second illustrates how, on $A'$, the event $C$ is equivalent to $\textrm{Cross}_k(R)$. The third shows how the crossing given by $B_1$, combined with the dual crossing given by $B_2^\compl$, realises $C$ but not $\textrm{Cross}_k(R)$.}
\label{f:var}
\end{figure}
\end{proof}

\begin{remark}
\label{r:genrev}
Similarly to in Section \ref{s:genbound}, combining Propositions \ref{p:entbound} and \ref{p:lbderiv} yields a general lower bound on the revealments of increasing events. We omit the proof, but the result is the following. Let $\ell \in \R$, let $R \ge r$, let $A$ be an increasing continuity event supported on $B(R)$, let $s > 0$, and let $\mathcal{A} \in \mathcal{A}_s$ be an algorithm determining $A$. Then there exists a $c > 0$ depending only on $d$ such that
\[   \max_{S \in \mathcal{S}_s} \textrm{Rev}(S) \ge   \frac{ c \big( (\int q / \|q\|_2 )\min\{1, (s/r)^d \}  \Var_\ell[ \id_A ]  \big)^{2/3} }{  \P_\ell[A]^{1/3} R^{d/3}}   \]
where the revealments $\textrm{Rev}(S)$ are under $\P_\ell$.

\smallskip
One can also prove a lower bound on $\max_{S \in \mathcal{S}'} \textrm{Rev}(S)$ for general $\mathcal{S}' \subset \mathcal{S}_s$, analogous to Proposition \ref{p:genrev2}, however in that case we would need $q \ge 0$ (for the same reason as explained in Remark \ref{r:disjoint} above) and also the refinement to Proposition \ref{p:russo} mentioned in Remark \ref{r:refine}.
\end{remark}

\subsection{Proof of Lemma \ref{l:classicalgf}}
For the first statement, it is enough to prove that
\begin{equation}
\label{e:boot}
 \liminf_{R \to \infty}  \mathbb{P}_{\ell_c}[\textrm{Cross}_5(R)] > 0  
\end{equation}
since then the result follows by the continuity of $\ell \mapsto \mathbb{P}_\ell[\textrm{Cross}_5(R)]$. By a classical bootstrapping argument \cite[Section 5.1]{kes82} (and Lemma \ref{trunc} below), there are $c_1, \varepsilon > 0$ such that
\begin{equation}
\label{e:boot2}
 \mathbb{P}_\ell[\textrm{Cross}_5(3R)] \le c_1 \big(  \mathbb{P}_\ell[\textrm{Cross}_5(R)]^2 + R^{-\varepsilon} \big)  
\end{equation}
for $\ell \in \R$ and $R$ sufficiently large. A consequence of \eqref{e:boot2} and the continuity of $\ell \mapsto \mathbb{P}_\ell[\textrm{Cross}_5(R)]$ is that  
\[ \liminf_{R \to \infty}  \mathbb{P}_\ell[\textrm{Cross}_5(R)]  < 1/c_1   \quad \implies \quad  \liminf_{R \to \infty}  \mathbb{P}_{\ell'}[\textrm{Cross}_5(R)]  = 0 \quad \text{for some } \ell' > \ell .\]
Covering the annulus $\Lambda_{5R} \setminus \Lambda_{3R}$ with $2d$ symmetric copies of $B_5(R)$, one can find a finite collection of copies $A_i$ of $\textrm{Cross}_5(R)$  such that $ \{\Lambda_1 \longleftrightarrow \infty\} \subseteq A_1(3R, 5R)  \subseteq \cup_i A_i $. Hence we also have
\[ \liminf_{R \to \infty}  \mathbb{P}_{\ell'}[\textrm{Cross}_5(R)] = 0 \quad \implies \quad \P_{\ell'}[\Lambda_1 \longleftrightarrow \infty] = 0  \quad \implies \quad \ell' \le \ell_c , \]
and so we deduce \eqref{e:boot}.

\smallskip
The RSW estimates in the second statement are a general property of stationary planar percolation models with positive associations and the symmetry in Assumption \ref{a:gf} \cite{kt20}.

\smallskip
For the third statement, consider first the case $\X = \Z$. Under Assumption \ref{a:gf} the covariance kernel $K$ is strictly positive definite, which implies that every compactly supported event is a continuity event. So let us assume $\X = \R$. Letting $g$ be a smooth function, since $f$ is $C^2$-smooth and $(f(x), \nabla f(x), \nabla^2 f(x))$ is non-degenerate, by \cite[Lemma 11.2.11]{at07}, for any $\ell\in\R$, almost surely there are no critical points of $f$ in the set $\{f + g + \ell = 0 \}$. The same holds for the field restricted to a smooth hypersurface. 
Hence, almost surely, on event $\{f + g + \ell \in \textrm{Cross}_k(R)\}$ or $\{f + g +  \ell \in A_1(r, R)\}$, there exist a witness path $\gamma$ and a $\delta>0$ such that $f+g +\ell>\delta$ on the image of $\gamma$. Hence the event is still verified by the field $f+ g + \ell\pm\delta'$ for all $\delta'\leq\delta$, which implies continuity.

\subsection{Proof of Lemma \ref{l:algo}}

We begin by introducing some notation. We make a distinction between the set of boxes that are \textit{revealed} by an algorithm, and the set $V \subset \R^d$ on which the field $f$ is \textit{determined} by an algorithm.
More precisely, for $V \subset \R^d$ and a set of boxes $\mathcal{P} \subset \mathcal{S}_s$, we say that $f$ is \textit{determined on $V$ using $\mathcal{P}$} if $f|_V = (\sum_{S \in \mathcal{P}} f_S)|_V$, or equivalently, if $((\bigcup_{S \in \mathcal{S}_s \setminus \mathcal{P}}\Supp(q\star \one_S))\cap V=\emptyset$.

Distinct boxes $S, S' \in \mathcal{S}_r$ are \textit{adjacent} if their closures have non-empty intersection. For a set of boxes $\mathcal{P} \subset \mathcal{S}_r$ define its \textit{outer boundary} 
\[ \partial^+ P :=  \{S \in \mathcal{S}_r \setminus  \mathcal{P} : \text{ $S$ is adjacent to a box $S' \in \mathcal{P}$ } \} , \]
so in particular $\partial^+ \{S\}$ are the boxes adjacent to $S$. Define also the \textit{interior} $\textrm{int}(\mathcal{P}) := \{ S \in  \mathcal{P} : \partial^+ \{S\} \subseteq \mathcal{P}  \}$. Note that, since $q$ is supported on $\Lambda_r$, $f$ is determined on $\textrm{int}(\mathcal{P})$ using $\mathcal{P}$. A \textit{primal (resp.\ dual) path} will designate a path in $\{f \geq 0\}$ (resp.\ $\{f \leq 0\}$). An \textit{interface} inside a compact set $K$ will designate, in the case $\X=\R$, a connected component of $\{f = 0\}\cap K$. In the case $\X=\Z$, an \textit{interface} will be a self-avoiding edge path $(e_1,...,e_k)$ inside $K$ such that
\begin{itemize}
    \item for any $i=1,...,k$, the endpoints of $e_i$ are in $\{f \ge 0\}$,
    \item for any $i$, there exists a vertex $v\in\Z^d$ such that $e_i$ and $v$ are at distance smaller or equal to $1$ for the sup norm, and $f(v)\le 0$, and
    \item the path is maximal for these properties.
\end{itemize}
In both settings, we say that these paths are \textit{contained in} a set of boxes $\mathcal{P} \subset \mathcal{S}_r$ if they are contained in $\cup_{S \in \mathcal{P}} S$. The \textit{left} and \textit{right} sides of $B_k(R)$ are respectively $\{-R\} \times [-kR, kR]^{d-1}$ and $\{R\} \times [-kR, kR]^{d-1}$, and if $d=2$ the \textit{top} and \textit{bottom} sides are defined similarly.

\smallskip
For the first statement consider the following algorithm:
\begin{itemize}
\item Reveal every box that intersects $\Lambda_1$ as well as all adjacent boxes. 
\item Iterate the following steps:
\begin{itemize}
\item  Let $\Wc \subset \mathcal{S}_r$ be the boxes that have been revealed. 
\item Identify the set $\Uc \subseteq \partial^+ (\textrm{int}(\Wc))$ such that, for each $S \in \Uc$, there is a primal path contained in $\textrm{int}(\Wc) \cap \Lambda_R$ between $\Lambda_1$ and the boundary of $S$ (measurable since $f$ is determined on $\textrm{int}(\Wc)$). In other words, $\Uc$ contains all boxes on which $f$ is not yet determined but which are connected to $\Lambda_1$ by a primal path in $\Lambda_R$ that \textit{has} been determined.
\item If $\Uc$ is empty end the loop. Otherwise reveal the boxes in $\partial^+ \Uc \setminus \Wc$.
\end{itemize}
\item If $\textrm{int}(\Wc) \cap \Lambda_R$ contains a primal path between $\Lambda_1$ and $\partial \Lambda_R$ output $1$, otherwise output~$0$.
\end{itemize}
This algorithm determines $A_1(R)$ since $\textrm{int}(\Wc)$ eventually contains all the components of $\{f \ge 0\} \cap \Lambda_R$ that intersect $\Lambda_1$. To estimate the sum of revealments $\rev(S)$, a box $S$ is revealed if and only if either (i) it is adjacent to a box that intersects $\Lambda_1$, or (ii) there is a primal path in $\Lambda_R$ between $\Lambda_1$ and a box adjacent to $S$. If $S = v + [0, r)^2$ and $\Lambda_1 \cap (v + \Lambda_{2r}) = \emptyset$ then the latter implies the occurrence of $\Lambda_1 \longleftrightarrow v + \Lambda_{2r}$. Summing over $S$ gives
\[ \sum_{S \in \mathcal{S}_r} \textrm{Rev}(S) \le      \sum_{v \in r \Z^d \cap \Lambda_{R+2r}} \P_p[ \Lambda_1 \longleftrightarrow v + \Lambda_{2r}] .\]

\smallskip
The second algorithm is the following
\begin{itemize}
\item Define $L$ to be $\{-R\}\times[-kR,kR]^{d-1}$, and reveal every box that intersects $L \cap B_k(R)$, as well as all adjacent boxes. 
\item Iterate the following steps:
\begin{itemize}
\item  Let $\Wc \subset \mathcal{S}_r$ be the boxes that have been revealed. 
\item Identify the set $\Uc \subseteq \partial^+ (\textrm{int}(\Wc))$ such that, for each $S \in \Uc$, there is a primal path contained in $\textrm{int}(\Wc) \cap B_k(R)$ between $L \cap B_k(R)$ and the boundary of $S$.
\item If $\Uc$ is empty end the loop. Otherwise reveal the boxes in $\partial^+ \Uc \setminus \Wc$.
\end{itemize}
\item If $\textrm{int}(\Wc) \cap B_k(R)$ contains a primal path between the left and right sides of $B_k(R)$ output $1$, otherwise output $0$.
\end{itemize}
  A box $S \in \mathcal{S}_r$ such that $d_\infty(S, B_k^+(R)) \le r$ is only revealed if there is a primal path in $B_k(R)$ between $L$ and a box adjacent to $S$, which implies the occurrence of (a translation of) the event $A_1(2r, d)$, where $d$ is the distance from the centre of $S$ to $L$. Since $d$ is at least $R-2r$, $\rev(S) \le \P_\ell[A_1(2r, R-2r)]$ as required.

\smallskip

The final algorithm (specific to $d=2$) is:
\begin{itemize}
\item Define $L_1=[-R,R]\times\{-kR\}$ and $L_2=\{-R\}\times[-kR,kR]$, and reveal every box that intersects $(L_1\cup L_2) \cap B_k(R)$, as well as all adjacent boxes.
\item Iterate the following steps:
\begin{itemize}
\item Let $\Wc \subset \mathcal{S}_r$ be the boxes that have been revealed. 
\item Identify the set $\Uc \subseteq \partial^+(\textrm{int}(\Wc))$ such that, for each $S \in \Uc$, there is an interface contained in $\textrm{int}(\Wc) \cap B_k(R)$ between $(L_1\cup L_2) \cap B_k(R)$ and the boundary of $S$.
\item If $\Uc$ is empty end the loop. Otherwise reveal the boxes in $\partial^+ \Uc \setminus \Wc$. 
\end{itemize}
\item If $\textrm{int}(\Wc) \cap B_k(R)$ contains a primal (resp.\ dual) path between the left and right (resp.\ top and bottom) sides of $B_k(R)$ terminate with output $1$ (resp.\ $0$).
\item Since $\textrm{int}(\Wc)$ contains interfaces inside $B_k(R)$ that intersect $L_1 \cap L_2$ and the algorithm has not yet terminated, exactly one of $\{f \ge 0\} \cap B_k(R)$ or $\{f \le 0\} \cap B_k(R)$ has an interface that intersects all four sides of $B_k(R)$. Partition $B_k(R)$ into regions $(P_i)$ using the interfaces inside $\cap B_k(R)$ that intersect $L_1\cup L_2$. Let $\mathbb{A}$ to be the region $P_i$ which contains the top-left corner of $B_k(R)$, and set $\mathcal{C}=1$ (resp.\ $\mathcal{C}=0$) if $f$ is positive (resp.\ negative) on $P_i$. Then iterate the following:
\begin{itemize}
\item If $\mathbb{A}$ contains a path in $B_k(R)$ between its left and right sides  terminate with output~$\mathcal{C}$.
\item Change the value of $\mathcal{C}$ (from $0$ to $1$ or $1$ to $0$), and add to $\mathbb{A}$ the region $P_i$ that is adjacent to it.
\end{itemize}
\end{itemize}
The final loop is illustrated in Figure \ref{f:alg}; it terminates almost surely since there are a finite number of interfaces in $B_k(R)$ (recall that when $\X=\R$, $f$ is $C^1$-smooth). Note that the algorithm does not necessarily reveal all components of $\{f = 0\}$ inside $B_k(R)$ -- any components which are closed loops or only touch the top and right sides of $B_k(R)$ are not revealed -- but these do not affect whether $\textrm{Cross}_k(R)$ occurs. 

\begin{figure}
\centering
\includegraphics[height=5cm]{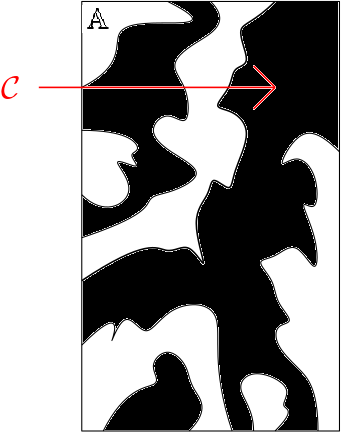}
\includegraphics[height=5cm]{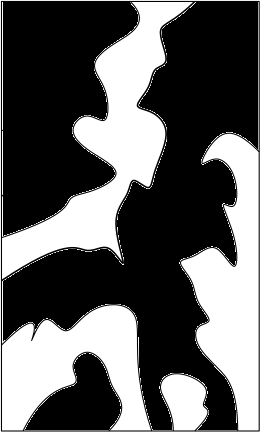}
\includegraphics[height=5cm]{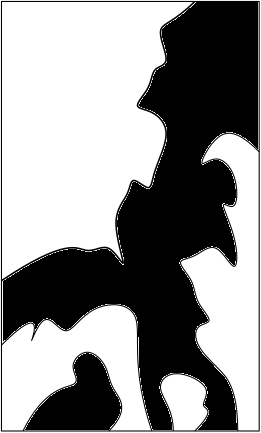}
\includegraphics[height=5cm]{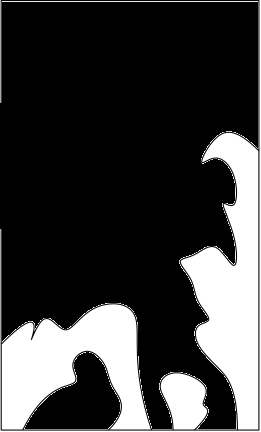}
\caption{The final loop of the algorithm in the proof of the third statement of Lemma \ref{l:algo}; this loop occurs when there is no left-right or top-bottom paths in $\{f=0\} \cap B_k(R)$. In this example the loop expands the area $\mathbb{A}$ three times in order to determine the sign $\mathcal{C}$ of the crossing.}
\label{f:alg}
\end{figure}

To estimate the revealments of this algorithm, a box $S \in \mathcal{S}_r$ such that $d_\infty(S, B_k^\dagger(R)) \le r$ is only revealed if there is an interface in $B_k(R)$ between $L_1 \cap L_2$ and a box adjacent to $S$, which implies the occurrence of (a translation of) the event $A_2(2r, d')$, where $d'$ is the distance from the centre of $S$ to $L_1 \cap L_2$. Since $d'$ is at least $R-2r$, $\rev(S) \le \P_\ell[A_2(2r, R-2r)]$ as required.

\medskip
\section{Extension to models with unbounded range of dependence}
\label{s:gf}

In this section we use approximation arguments to extend our results to models with unbounded range of dependence. We shall assume that $f$ satisfies Assumption \ref{a:gf} with parameter $\beta>d$, and also that \eqref{a:pos1} holds.

\smallskip
As well as the intermediate propositions \ref{p:ubgf1}--\ref{p:lbgf}, in this section we shall additionally rely on a slight variant of Proposition \ref{p:lbgf} which applies outside the self-dual case:

\begin{proposition}
\label{p:lbgf2}
Suppose $r>0$ is such that $q$ is supported on $\Lambda_r$. Then for $k \ge 1$ there exists $c = c(k)>0$ such that, for $\ell \in \R$ and $R \ge 8r > 0$,
\[ \frac{d^-}{d \ell} \mathbb{P}_\ell[ \textrm{Cross}_k(R)]  \geq  \frac{c}{ \|q\|_2 }\frac{ \P_\ell[\textrm{Cross}_k(R)]\big(1- \P_\ell[\textrm{Cross}_k(R)]\big) }{\frac{r}{R}  \sum_{i=2}^{R/r}\P_\ell[A_1(2r,ir)] }  . \]
\end{proposition}

\begin{proof}[Proof of Proposition \ref{p:lbgf2}]
The proof is very similar to that of Proposition \ref{p:lbgf}. Similarly to in Lemma \ref{l:algo}, for every $k \ge 1$, $\ell \in \R$, and $ R \ge 4r > 0$, there is an algorithm in $\mathcal{A}_r$ determining $\textrm{Cross}_k(R)$ such that, under $\mathbb{P}_\ell$, 
\[   \max_{S \in \mathcal{S}_r } \textrm{Rev}(S) \le \frac{4r}{R}\sum\limits_{i=2}^{R/r}\P_\ell[A_1(2r ,ir)]  .\]
Indeed we can define an algorithm which initially reveals a line $L$ of the form $L=\{i r\}\times[-kR,kR]^{d-1}$ for some uniform random $i$ in the given range. Then the algorithm reveals sequentially the set which is connected to that line through primal paths until the occurrence of the crossing event is determined. Then we deduce the result by applying Proposition \ref{p:lbderiv} (in the case $s=r$, $\mathcal{S}' = \mathcal{S}_s$, and $A = \textrm{Cross}_k(R)$) to this algorithm. 
\end{proof}

We shall also need an approximation lemma that allows us to compare a general $f = q \star W$ to a field with finite-range dependence. Fix a smooth symmetric cutoff function $\varphi: \R \to [0,1]$ such that $\varphi(x) = 1$ for $\|x\|_\infty \le 1/2$, $\varphi(x) = 0$ for $\|x\|_\infty \ge 1$. For $r > 0$ define 
\begin{equation}
\label{e:fr}
 f_r := q_r \star W 
 \end{equation}
where $q_r(x) := q(x) \varphi(|x|/r)$. Note that $q_r$ is supported on $\Lambda_r$, and also, since $q \in L^1(\X^d) \cap L^2(\X^d)$, as $r \to \infty$,
\begin{equation}
\label{e:qconv}
\|q_r\|_2 \to \|q\|_2  \qquad \text{and} \qquad \int q_r \to \int q. 
\end{equation}
Using the fact that Assumption \ref{a:gf} and \eqref{a:pos1} hold for $f$, we can verify that these hold also for $f_r$ for sufficiently large $r$ (on the other hand, deducing that \eqref{a:pos2} holds for $f_r$ if it holds for $f$ seems difficult but we do not need it). In particular, in the self-dual case $\ell_c(f_r) = \ell_c(f) = 0$.

\smallskip
The following lemma, essentially taken from \cite{mv20}, allows us to compare $f$ and $f_r$. We give details on the proof at the end of the section.

\begin{lemma}
\label{trunc}
There exist $c_1, c_2 > 0$ such that, for $r, R \ge 2$, increasing event $A$ measurable with respect to $f|_{B(R)}$, and $\ell \in \R$, 
\[ \left|\P_\ell[f\in A]-\P_\ell[f_r\in A]\right|\leq c_1 \big( R^{d/2} (\log R)  r^{-(\beta-d/2)} +  e^{ - c_2 (\log R)^2 } \big). \]
The same conclusion holds if $A$ is the intersection of one increasing and one decreasing event which are both measurable with respect to $f|_{B(R)}$.
\end{lemma}

We are now ready to prove Theorem \ref{t:oagf}:

\begin{proof}[Proof of Theorem \ref{t:oagf}]
In the proof $c > 0$ are constants that depend only on $f$ (and the choice of the cutoff function $\varphi$ in \eqref{e:fr}) and may change from line to line. 

\smallskip
The bound $\eta_1 \le d-1$, and also $\eta_1 \le 1/2$ if $d=2$ and \eqref{a:pos2} holds, are rather classical; in fact they are true for any $\beta > d$. For the former, combining $\mathbb{P}_{\ell_c}[\textrm{Cross}_5(R)] \ge \delta$ (the first statement of Lemma \ref{l:classicalgf}) with the union bound applied along the hyperplane $\{0\} \times [-kR, kR]$ gives $\P_{\ell_c}[A_1(R)] \ge c R^{-(d-1)}$. For the latter, by combining the RSW estimates (the second statement of Lemma~\ref{l:classicalgf}) with Lemma~\ref{trunc} one can deduce (see \cite{bg17, mv20} for similar arguments)
\[ \P_{\ell_c}\big[   \{-R\} \times [-R,R]   \stackrel{B_1(R)}{\doublepath} \{R\} \times [-R,R]  \big]  \ge c \big( 1-   R^{1 -(\beta-1)}(\log R) \big) \ge c / 2 \]
for sufficiently large $R$. By the union bound applied along $\{0\} \times [-R, R]$ this implies $\P_{\ell_c}[A_2(R)] \ge c R^{-1}$, and given \eqref{e:eta1eta2} we see that  $\P_{\ell_c}[A_1(R)] \ge c R^{-1/2}$. 

\smallskip
We now prove the remaining bounds, beginning with the first statement. Fix $1 > \alpha > \frac{d/2}{\beta-d/2}$ and $\eta_1 > \eta^\ast > 0$ (if $\eta_1 = 0$ there is nothing to prove). Then by monotonicity in $\ell$, the union bound, and the definition of $\eta_1$,
\[ \mathbb{P}_\ell[ A_1(r, R ) ]  \le  \mathbb{P}_{\ell_c}[ A_1(r,R) ]  \le c r^{d-1} \mathbb{P}_{\ell_c}[A_1(R-r)]  \le r^{d-1} R^{-\eta^\ast}   \]
for all $\ell \le \ell_c$, $R$ sufficiently large, and $r \in [1, R/2]$. Set $r = R^\alpha$. Then by an integral comparison,
  \begin{align*}
  \frac{r}{R} \sum_{i=2}^{R/r} \mathbb{P}_{\ell_c}[ A_1(2r,ir )] & \le  c  r^{-\eta^\ast+(d-1)}  \times  \frac{r}{R} \sum_{i=2}^{R/r} i^{-\eta^\ast}   \\
  &  \le  c  r^{-\eta^\ast+(d-1)} (R/r)^{-\min\{\eta^\ast,1\}}(\log (R/r) ) \\
  & \le c (\log R) \big( R^{\alpha(d-1-\eta^\ast) -(1-\alpha)\min\{\eta^\ast,1\} } \big) 
  \end{align*}
  for $\ell \le \ell_c$ and large $R$. Consider the field $f_r$ defined in \eqref{e:fr}. By Lemma~\ref{trunc}, 
\[ \P_\ell[f_r\in A_1(r', R)]  \le \P_\ell[A_1(r', R)]  + c R^{d/2  - \alpha(\beta-d/2)} (\log R) + c e^{ - c (\log R)^2 }  \]
 for $\ell \le \ell_c$ and $2 \le r' \le R$, and hence 
 \[ \frac{r}{R} \sum_{i=2}^{R/r} \mathbb{P}_{\ell}[f_r \in A_1(2r,ir )]  \le c (\log R) \big( R^{\alpha(d-1-\eta^\ast) -(1-\alpha)\min\{\eta^\ast,1\} }  + R^{d/2 - \alpha(\beta-d/2)} \big) \]
 for $\ell \le \ell_c$ and large $R$. Moreover, by Lemma \ref{l:classicalgf} there are $\delta > 0$ and $\ell' = \ell'(R) \le \ell_c$ such that $ \P_{\ell'}[\textrm{Cross}_5(R)] = \delta$. Hence, again by Lemma \ref{trunc},
\[ \P_{\ell'}[f_r \in \textrm{Cross}_5(R)]\big(1-\P_{\ell'}[f_r \in \textrm{Cross}_5(R)]\big)  \ge \delta(1-\delta) - c R^{d/2 - \alpha(\beta-d/2)} (\log R)  \ge \delta(1-\delta)/2   \]
for large $R$, where we used that $\alpha > \frac{d/2}{\beta-d/2}$.

\smallskip
We now apply Propositions \ref{p:ubgf2} and \ref{p:lbgf2} to the field $f_r$ at the sequence of levels $\ell'(R) \le \ell_c$. First, by Proposition \ref{p:ubgf2} (recalling \eqref{e:qconv})
\begin{align}
\label{e:oab1gf}
 \frac{d}{d \ell} \mathbb{P}_\ell[ f_r \in \textrm{Cross}_5(R)] \Big|_{\ell= \ell'}  & \ge \frac{c \delta(1-\delta)}{  \|q_r\|_2}  \Big( \frac{r}{R} \sum_{i=2}^{R/r} \mathbb{P}_{\ell'}[f_r \in A_1(2r,ir )] \Big)^{-1}  \\
 \nonumber & \ge  c (\log R)^{-1}  \big( R^{\alpha(d-1-\eta^\ast) -(1-\alpha)\min\{\eta^\ast,1\} }  +  R^{d/2 - \alpha(\beta-d/2)}  \big) ^{-1} 
 \end{align}
for large $R$. Similarly, by Proposition \ref{p:ubgf1},
\begin{align}
\label{e:oab2gf}
 \frac{d}{d \ell} \mathbb{P}_\ell[ f_r \in \textrm{Cross}_5(R) ] \Big|_{\ell= \ell'} & \le  c R^{d/2 }  \Big(  \mathbb{P}_{l'}[f_r \in A_1(2r,R)] \Big)^{1/2}  \\
  \nonumber & \le  c R^{d/2} \big( R^{- \eta^\ast + \alpha(d-1) }  +  R^{d/2 - \alpha(\beta-d/2) }(\log R) \big)^{1/2} \\
 \nonumber & \le  c \sqrt{\log R} \big( R^{d/2- \eta^\ast/2 + \alpha(d-1)/2 }  +  R^{3d/4 - \alpha(\beta-d/2)/2} \big)
 \end{align}
for large $R$, where we used that $\sqrt{a+b} \le \sqrt{a} + \sqrt{b}$ for $a,b > 0$. Comparing \eqref{e:oab1gf} and \eqref{e:oab2gf} and expanding the brackets we deduce that at least one of the exponents
 \begin{align*}
    & E_1 :=\big(3d/4 - \alpha(\beta-d/2)/2 \big) + \big(d/2 - \alpha(\beta-d/2) \big)  \\
    & E_2 := \big(d/2- \eta^\ast/2 + \alpha(d-1)/2\big) + \big(d/2 - \alpha(\beta-d/2)\big)    \\
   & E_3 := \big(d/2- \eta^\ast/2 + \alpha(d-1)/2 \big)  + \big( \alpha(d-1-\eta^\ast) - (1-\alpha)\min\{\eta^\ast, 1\} \big) \\
   &  E_4 :=\big(3d/4 - \alpha(\beta-d/2)/2 \big) +  \big( \alpha(d-1-\eta^\ast) - (1-\alpha)\min\{\eta^\ast, 1\} \big) 
   \end{align*}
 must be non-negative. The first is equivalent to $\alpha \le \frac{5d}{6(\beta-d/2)}$. The second implies that $\eta^\ast \le \frac{d}{3} + \alpha(d-1)$, assuming that $\alpha > \frac{5d}{6(\beta-d/2)}$. The third is equivalent to either $\eta^\ast \le \frac{d}{3}  + \alpha(d-1)$ (if $\eta^\ast \le 1$) or $\eta^\ast \le \frac{d-2+\alpha(3d-1)}{1+2\alpha}$ (if $\eta^\ast > 1$). Finally, the fourth implies either $\eta^\ast \le \frac{d}{3}  + \alpha(d-1)$ (if $\eta^\ast \le 1$, assuming that $\alpha > \frac{5d}{6(\beta-d/2)}$) or $\alpha \le \frac{3d-4}{2\beta-5d + 4}$ (if $\eta^\ast > 1$). One can check that, since $d\ge3$, $ \frac{5d}{6(\beta-d/2)} < \frac{3d-4}{2\beta-5d + 4}$ and $ \frac{d}{3}  + \alpha(d-1) <  \frac{d-2+\alpha(3d-1)}{1+2\alpha}$. Hence we conclude that if $ \alpha > \frac{3d-4}{2\beta-5d + 4}$ then $\eta^\ast \le  \frac{d-2+\alpha(3d-1)}{1+2\alpha}$. Sending $\alpha \to  \frac{3d-4}{2\beta-5d + 4}$ from above gives the result.

\smallskip
The proof of the remaining statements are similar, and closer to the arguments in Section~\ref{s:bou}. For the second statement, fix $1 > \alpha > \frac{3d/2-1}{\beta-d/2}$ and $\eta_1 > \eta^\ast > 0$. As in the proof of the first statement, 
\begin{equation}
\label{e:oab3gf}
 \mathbb{P}_{\ell_c}[ A_1(2r, R ) ]  \le r^{d-1} R^{-\eta^\ast} 
 \end{equation}
for large $R$ and $r \in [1, R/4]$. Now let $r = R^\alpha$. Since we have the a priori bound $\P_{\ell_c}[A_1(1R) ] \ge c R^{-(d-1)}$ (from the start of the proof), by Lemma~\ref{trunc}
\begin{align}
\label{e:compare}
  | \P_{\ell_c}[f_r \in A_1(r', R')]  -  \P_{\ell_c}[A_1(r', R') ] | & \le   c R^{d/2  - \alpha(\beta-d/2)} (\log R) + c e^{ - c (\log R)^2 } \\
  \nonumber & \le   \P_{\ell_c}[A_1(r', R')]  / 2
 \end{align}
 for large $R$ and $1 \le r' \le R' \le R$, where we used that $d/2 - \alpha(\beta-d/2) \le -(d-1)$ by the definition of $\alpha$. 
 
  Similarly to \eqref{e:compare} we also have
\begin{align*}
  |  \P_{\ell'}[f_r \in A_1(R)]   -  \P_{\ell'}[ A_1(R)]   | &  \le   c R^{d/2  - \alpha(\beta-d/2)} (\log R) + c e^{ - c (\log R)^2 } \\
  &  \le   \P_{\ell_c}[ A_1(R)]   =  \P_{\ell'}[ A_1(R)]/2   . 
  \end{align*}
Then applying Proposition \ref{p:ubgf1} to the field $f_r$, for large $R$,
\begin{align*} 
 \mathbb{P}_{\ell_c}[ A_1(R)] & = \mathbb{P}_{\ell'}[ A_1(R)] - \mathbb{P}_{\ell_c}[ A_1(R)]    \le 2 ( \mathbb{P}_{\ell'}[ f_r \in A_1(R)] - \mathbb{P}_{\ell_c}[  f_r \in A_1(R)]   )  \\
&  \le \frac{2 r^{d/2}(\ell'-\ell_c)}{\int q}  \sqrt{  \mathbb{P}_{\ell'}[A_1(R)]   \! \! \sum_{v \in r \Z^d \cap \Lambda_{R+2r}} \! \! \P_{\ell_c}[ \Lambda_{2r} \longleftrightarrow v + \Lambda_{2r} ]}   \\
 &  \le  \frac{2(\ell'-\ell_c)}{\int q}   R^{\alpha d/2}  \sqrt{ R^{-\eta^\ast}   R^{\max\{ 0, \alpha(d-1), -\alpha+d-2\eta^\ast \} } (\log R) } .
 \end{align*}
Comparing with \eqref{e:mfbuse} implies that  $\alpha d -\eta^\ast + \max\{ \alpha(d-1), -\alpha+d-2\eta^\ast \} \ge 0$, and so $\eta^\ast \le \max\{ d/3 +  \alpha(d-1)/3, \alpha (2d-1) \}$, and sending $\alpha \to  \frac{3d/2-1}{\beta-d/2}$ from above gives the result.

\smallskip
Finally, consider the third statement. Fix  $1 > \alpha > \frac{5}{3(\beta-1)}$ and $r = R^\alpha$. By the RSW estimates (the second statement of Lemma \ref{l:classicalgf}) and Lemma \ref{trunc},
\[    \P_{\ell_c}[f_r \in \textrm{Cross}_1(R)]\big(1-\P_{\ell_c}[f_r \in \textrm{Cross}_1(R)] \big) \le c - c  R^{1 -\alpha(\beta-1)} (\log R) < c /2  \]
for large $R$. Then by Propositions \ref{p:ubgf2} and \ref{p:lbgf} we have, for large $R$,
\[  c  \mathbb{P}_{\ell_c}[f_r \in A_2(2r,R-2r )]^{-1}   \le  \frac{d}{d \ell} \mathbb{P}_\ell[ f_r \in \textrm{Cross}_1(R)] \Big|_{\ell= \ell_c}    \le   c  R \sqrt{\mathbb{P}_{\ell_c}[f_r \in A_2(2r,R-2r ) ]}  \]
which gives $ \mathbb{P}_{\ell_c}[f_r \in A_2(2r, R-2r )]   \ge  c R^{-2/3}$ for large $R$. Applying the union bound and  Lemma \ref{trunc} (valid since $A_2(3 \sqrt{2}r, R)$ is the intersection of an increasing and a decreasing event) yields
\[  \mathbb{P}_{\ell_c}[A_2(R-2r )]  \ge c r^{-1} \mathbb{P}_{\ell_c}[A_2(2r,R-2r )] \ge c  R^{-\alpha} \big( R^{-2/3} -  R^{1 - \alpha(\beta-1)} (\log R)  \big) . \]
Sending $\alpha \to \frac{5}{3(\beta-1)}$ from above gives that, for every $\varepsilon > 0$, 
 \begin{equation}
\label{e:twoarm2}
 \mathbb{P}_{\ell_c}[A_2(R)] \ge c_2   R^{-2/3 - 5/(3(\beta - 1)) - \varepsilon}   . 
 \end{equation}
 for $c_2 = c_2(\varepsilon) > 0$ and large $R$. Hence by \eqref{e:eta1eta2} (recall that here $\X=\R$),
 \begin{equation}
\label{e:limsup4}
\mathbb{P}_{\ell_c}[A_1(R )]  \ge ( \mathbb{P}_{\ell_c}[A_2(R )] )^{1/2} \ge  c_3 R^{-1/3- 5/(6(\beta-1)) - \varepsilon/2} 
\end{equation}
for $c_3 = c_3(\varepsilon) > 0$ and large $R$, which gives the result. 
\end{proof}

\subsection{The approximation lemma}
To finish the section we give some details on the approximation result in Lemma~\ref{trunc}.

\begin{proof}[Proof of Lemma \ref{trunc}]
We first observe that $g := f - f_r = (q-q_r) \star W$ is a stationary Gaussian field satisfying 
\[ \E[ g(0)^2 ] = \int_{x \in \R^d} (q-q_r)^2(x) \, dx  =  \int_{|x| > r/2} q(x)^2 (1 - \varphi(|x/r|))^2 \, dx \le  \int_{|x| > r/2} q(x)^2  \le c_1  r^{d-2\beta} ,   \] 
for some $c_1 > 0$ and we used that $|q(x)| \le  c |x|^{-\beta}$ by Assumption \ref{a:gf}. Similarly, in the case that $f$ is continuous, $g$ is $C^1$-smooth and for every direction $v \in \mathbb{S}^1$,
\[ \E[ (\partial_v g(0) )^2 ]  =  \int_{ |x| > r/2} (\partial_v (q(x)(1 - \varphi(|x/r|)) )^2 \, dx   \le c_2  r^{d-2\beta} ,   \] 
for some $c_2 > 0$ that depends on the (uniformly bounded) derivatives of $\varphi$, and we used that $|\nabla q(x)| \le  c |x|^{-\beta}$ by Assumption \ref{a:gf}. Then by a Borell-TIS argument (see \cite[Proposition 3.11]{mv20} for the case $f$ is continuous and $d=2$, and the proof is similar in the general case) there exist $c_3, c_4 > 0$ such that, for all $R, r \ge 2$,
\begin{equation}
\label{e:btis}
 \P [  \| f - f_r \|_{\infty, B(R)} >  c_3 (\log R) r^{-(\beta - d/2)} ]   \le c_3 e^{-c_4 (\log R)^2}  
 \end{equation}
We also note the following consequence of \eqref{a:pos1} which can be proved with a Cameron-Martin argument (see \cite[Proposition 3.6]{mv20} for the case $d=2$, and the proof is identical in all dimensions): there exists a $c_5 > 0$ such that, for $R \ge 1$, increasing event $A'$ measurable with respect to $f|_{B(R)}$, $\ell \in \R$ and $t > 0$,
\begin{equation}
\label{e:cm}
  \P_\ell[ \{f  + t \in A' \} \setminus \{ f  \in A' \} ] =  \P_\ell[f  + t  \in A' ]  - \P_\ell[ f  \in A'  ] \le c_5 t R^{d/2} .
  \end{equation}

We now complete the proof, for which we may assume that $\ell = 0$. Consider $A = A_1 \cap A_2$ where $A_1$ is increasing, $A_2$ is decreasing, and both $A_1$ and $A_2$ are measurable with respect to $f|_{B(R)}$. Abbreviate $t = c_3 (\log R) r^{-(\beta-d/2)}$ and define $E = \{ \| f - f_r \|_{\infty, B(R)} > t \}$. Then 
\begin{align*}
 & \P[ f_r \in A_1 \cap A_2 ]  \le \P[ f_r \in A_1 \cap A_2 \cap E^c]  + \P[E]  \\
 & \quad \le \P[ \{ f + t \in A_1 \} \cap \{f - t \in A_2 \}   ]  + \P[E]  \\
 & \quad \le \P[ f \in A_1 \cap A_2 ] + \P[ \{f + t \in A_1 \} \setminus \{ f   \in A_1 \} ] +  \P[ \{f -t \in A_2 \} \setminus \{ f  \in A_2 \} ]   + \P[E] \\
 & \quad \le  \P[ f \in A_1 \cap A_2 ]  +   2 c_5  t R^{d/2}   +  c_3 e^{-c_4 (\log R)^2}
 \end{align*}
 where in the second inequality we used that $A_1$ (resp.\ $A_2$) is increasing (resp.\ decreasing) and measurable with respect to $f|_{B(R)}$, and the final inequality was by \eqref{e:btis} and \eqref{e:cm}. Similarly
 \begin{align*}
 & \P[ f_r \in A_1 \cap A_2 ]    \ge \P[ \{ f - t \in A_1 \} \cap \{f + t \in A_2 \}  \cap E^c ]  \\
 & \quad \ge \P[ f \in A_1 \cap A_2] - \P[ \{f  \in A_1 \} \setminus \{ f -t  \in A_2 \} ] - \P[ \{f  \in A_2 \} \setminus \{ f + t  \in A_2 \} ]  - \P[E] \\
 & \quad \ge \P[ f \in A_1 \cap A_2] -  2 c_5  t R^{d/2}   - c_3 e^{-c_4 (\log R)^2}
 \end{align*}
which gives the result.
\end{proof}

\medskip
\section{The Russo-type inequality for Gaussian fields and applications}
\label{s:osss}

In this section we prove the Russo-type inequality in Proposition \ref{p:russo}, and deduce Theorem~\ref{t:expdecay} as an application. We emphasise that in this section \eqref{a:pos1}--\eqref{a:pos2} play no role.

The main idea in the proof of Proposition \ref{p:russo}, which distinguishes it from the approach in \cite{mv20} (see Proposition \ref{p:russodis}), is to use an orthonormal decomposition of each $f_S$ to interpret $\frac{d^-}{d \ell} \mathbb{P}_\ell[A]$ and the resampling influences $\Infl_{\ell;A}(S)$ as measuring, respectively, the `boundary' and `volume' of certain sets in Gaussian space. Then we apply Gaussian isoperimetry to deduce the result. For a set $E \subset\mathbb{R}^n$ we denote
\[ E^{+\varepsilon} := \{ x \in \mathbb{R}^n : \text{ there exists } y \in E \text{ s.t. } \|x-y\|_2 \le \varepsilon \} \]
to be the $\varepsilon$-thickening of $E$.

\begin{proposition}[Gaussian isoperimetry]
\label{p:iso}
There exists a constant $c > 0$ such that, for every measurable $E \subset\mathbb{R}^n$ and $\varepsilon \ge 0$,
\[ \mathbb{P}[X \in E^{+\varepsilon} \setminus E] \ge   \sqrt{\frac{2}{\pi}}    \mathbb{P}[X \in E] (1 - \mathbb{P}[X \in E]) \varepsilon    -  c \varepsilon^2  \]
where $X$ is an $n$-dimensional standard Gaussian vector.
\end{proposition}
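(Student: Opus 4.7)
The plan is to reduce the statement to a one-dimensional inequality via the classical Gaussian isoperimetric inequality of Borell (1975) and Sudakov--Tsirelson (1974), then combine a Taylor expansion with an elementary pointwise estimate on the standard Gaussian density.

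First, write $p := \mathbb{P}[X \in E]$ and $a := \Phi^{-1}(p)$, where $\Phi$ is the standard normal CDF. The Borell--Sudakov--Tsirelson theorem asserts that among measurable subsets of $\mathbb{R}^n$ of a given Gaussian measure, half-spaces minimise the measure of the $\varepsilon$-thickening. Since a half-space of measure $\Phi(a)$ has thickening of measure $\Phi(a+\varepsilon)$, this gives
\[ \mathbb{P}[X \in E^{+\varepsilon} \setminus E] \ge \Phi(a+\varepsilon) - \Phi(a). \]
Writing $\varphi = \Phi'$ and applying Taylor's theorem with Lagrange remainder,
\[ \Phi(a+\varepsilon) - \Phi(a) \ge \varphi(a)\varepsilon - c_0 \varepsilon^2, \qquad c_0 := \tfrac{1}{2}\sup_{x\in\mathbb{R}} |x|\varphi(x) = \tfrac{1}{2}\varphi(1), \]
an absolute constant, independent of $n$.

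The remaining ingredient is the pointwise bound
\[ \varphi(a) \ge \sqrt{2/\pi}\,\Phi(a)(1-\Phi(a)), \qquad a \in \mathbb{R}. \]
Setting $g(a) := \varphi(a) - \sqrt{2/\pi}\,\Phi(a)(1-\Phi(a))$, one observes that $g$ is even, $g(0) = \sqrt{2/\pi}/4 > 0$, and $g(a) \to 0$ as $a \to \infty$ (using Mills' ratio $1-\Phi(a) \le \varphi(a)/a$). A direct computation gives
\[ g'(a) = \varphi(a)\bigl[\sqrt{2/\pi}(2\Phi(a)-1) - a\bigr], \]
and the bracket vanishes at $a=0$ while its derivative is $(2/\pi)e^{-a^2/2} - 1 \le 2/\pi - 1 < 0$. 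Hence the bracket is strictly negative on $(0,\infty)$, so $g$ is strictly decreasing there with limit $0$, forcing $g > 0$ on $[0,\infty)$ and then on all of $\mathbb{R}$ by evenness.

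Combining the three displays yields
\[ \mathbb{P}[X \in E^{+\varepsilon} \setminus E] \ge \varphi(a)\varepsilon - c_0 \varepsilon^2 \ge \sqrt{2/\pi}\, p(1-p)\varepsilon - c_0 \varepsilon^2, \]
which is the claimed bound with $c := c_0$. There is no real obstacle here: the only substantive input is the Borell--Sudakov--Tsirelson theorem, which is standard, and the pointwise inequality is a short calculus exercise. Moreover it holds with a factor of two to spare (the minimum of $\varphi(a)/[\Phi(a)(1-\Phi(a))]$ is $4/\sqrt{2\pi} = 2\sqrt{2/\pi}$, attained at $a=0$), so if a sharper constant were ever needed the approach would leave room to improve it.
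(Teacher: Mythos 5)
Your proof is correct and follows essentially the same route as the paper: both invoke the Borell--Sudakov--Tsirelson half-space comparison (the paper quotes the differential form and cites Ledoux for the integrated consequence $\P[X\in E^{+\varepsilon}]\ge\Phi(\Phi^{-1}(\P[X\in E])+\varepsilon)$, which you state directly), Taylor expand $\Phi$ with remainder bounded by $\tfrac12\sup|\varphi'|=\tfrac12\varphi(1)$, and finish with the pointwise bound $\varphi(a)\ge\sqrt{2/\pi}\,\Phi(a)(1-\Phi(a))$. The only cosmetic difference is that you prove this last inequality by a direct monotonicity argument on $g(a)=\varphi(a)-\sqrt{2/\pi}\,\Phi(a)(1-\Phi(a))$, whereas the paper deduces it from the decreasingness of the Mills ratio $(1-\Phi)/\varphi$; both are short and valid.
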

\begin{proof}
Let $\varphi(x)$ and $\Phi(x)$ denote the standard normal pdf and cdf respectively. The classical Gaussian isoperimetric inequality states that
\[  \liminf_{\varepsilon \downarrow 0} \varepsilon^{-1}  \mathbb{P}[X \in E^{+\varepsilon} \setminus E]     \ge \varphi( \Phi^{-1}( \mathbb{P}[X \in E] )   ) .  \]
A simple consequence (see, e.g., \cite[Eq. (3)]{led98}) is that, for any $\varepsilon \ge 0$,
\begin{equation}
\label{e:iso1}
 \mathbb{P}[X \in E^{+\varepsilon}] \ge   \Phi(\Phi^{-1}(\mathbb{P}[X \in E]) + \varepsilon  ). 
 \end{equation}
 By Taylor expanding $\Phi$ on the right-hand side of \eqref{e:iso1} we have 
 \[ \mathbb{P}[X \in E^{+\varepsilon} \setminus E] \ge   \varepsilon \varphi(\Phi^{-1}(\mathbb{P}[X \in E]) ) - \frac{1}{2} \sup_{x \in \mathbb{R}} |\varphi'(x)| \varepsilon^2    , \]
 and the result follows since, for all $x \in \mathbb{R}$, $\varphi(x) \ge \sqrt{\frac{2}{\pi}} \, \Phi(x) (1-\Phi(x))$(as can be seen from the fact that the Mill's ratio $(1-\Phi(x))/\varphi(x)$ is decreasing on $x \ge 0$), and since $|\varphi'(x)|$ is bounded over $\mathbb{R}$.
\end{proof}

We use the following orthogonal decomposition of $f_S$ (see Proposition \ref{p:odecom} in the appendix). Let $Z = (Z_i)_{i \ge 1}$ be a sequence of i.i.d.\ standard normal random variables and let $(\varphi_i)_{i \ge 1}$ be an orthonormal basis of $L^2(S)$. Then 
\begin{equation}
\label{e:odecom}
 f^n_S := \sum_{i \ge 1}^n Z_i (q \star \varphi_i) \Rightarrow f_S , 
 \end{equation}
in law with respect to the $C^0(\X^d)$-topology.

\begin{proof}[Proof of Proposition \ref{p:russo}]
By linear rescaling, we may suppose without loss of generality that $\ell = 0$ and $\|q\|_2 = 1$. For each $S \in \mathcal{S}_s$, let $g_S : \R^d \to [0,1]$ be a smooth function such that $g_S(x) = 1$ on $\{x : d_\infty(x, S) \le r + s\}$ and $g_S(x) = 0$ on $\{x : d_\infty(x, S) \ge 2(r+s)\}$. Then $\sum_{S \in \mathcal{S}_s}g_S(x) \le c_1 \max\{1, (r/s)^d\}$ for some constant $c_1 > 0$ depending only on $d$. Therefore, since $A$ is increasing, by the multivariate chain rule for Dini derivatives,
\begin{equation}
\label{e:rusfin0}
 \frac{d^-}{d \varepsilon} \mathbb{P}[f + \varepsilon \in A]  \Big|_{\varepsilon = 0} \ge \frac{1}{c_1 \max\{1, (r/s)^d\}}  \sum_{S \in \mathcal{S}_s} \frac{d^-}{d \varepsilon} \mathbb{P}[f + \varepsilon g_S \in A]  \Big|_{\varepsilon = 0} . 
 \end{equation}
For each $S \in \mathcal{S}_s$, let $f'_S$ denote an independent copy of $f_S$, define $h_S = f - f_S$, and let $\mathcal{F}_{h_S}$ be the $\sigma$-algebra generated by $h_S$. We next claim that, almost surely over $\mathcal{F}_{h_S}$,
\begin{equation}
\label{e:rusfin}
\frac{d^-}{d \varepsilon} \mathbb{P}[f_S + h_S +  \varepsilon g_S \in A | \mathcal{F}_{h_S} ] \Big|_{\varepsilon = 0}   \ge     c_2   \mathbb{P}[ \id_{\{f_S + h_S  \in A\}} \neq \id_{\{f'_S + h_S \in A \}} | \mathcal{F}_{h_S} ]  
\end{equation}
for some universal $c_2 > 0$. Together with \eqref{e:rusfin0}, this will complete the proof of Proposition \ref{p:russo} since
\begin{align*}
 \frac{d^-}{d \varepsilon} \mathbb{P}[f_S + \varepsilon g_S \in A]  \Big|_{\varepsilon = 0} & \ge  \E \Big[ \frac{d^-}{d \varepsilon} \mathbb{P}[f_S + h_S +  \varepsilon g_S \in A | \mathcal{F}_{h_S} ] \Big|_{\varepsilon = 0}  \Big]  \\
 & \ge c_2 \E \big[      \mathbb{P}[ \id_{\{f_S + h_S  \in A\}} \neq \id_{\{f'_S + h_S \in A \}} | \mathcal{F}_{h_S}  ]  \big] \\
 & =:  c_2 \Infl_{0;A}(S) .
 \end{align*}
where the first inequality is Fatou's lemma, and the second inequality is by \eqref{e:rusfin}.

\smallskip
It remains to prove \eqref{e:rusfin}. Henceforth we fix $S \in \mathcal{S}_s$, condition on $h_S$, and drop $\mathcal{F}_{h_S}$ from the notation. Let $(\varphi_i)_{i \ge 1}$ be an orthonormal basis of $L^2(S)$ and recall the decomposition \eqref{e:odecom}. Fixing $n \in \mathbb{N}$ and viewing $\{f_S^n + h_S \in A\}$ as a Borel set $E$ in the $n$-dimensional Gaussian space generated by the standard Gaussian vector $Z^n = (Z_i)_{1 \le i \le n}$, by Proposition \ref{p:iso}
\begin{equation}
\label{e:rus2}
   \mathbb{P}[Z^n \in E^{+\varepsilon} \setminus E]   \ge   c_3  \varepsilon   \mathbb{P}[Z^n \in E] (1 - \mathbb{P}[Z^n \in E])  -  c_4 \varepsilon^2  
   \end{equation}
for some $c_3, c_4 > 0$ and every $\varepsilon \ge 0$. Consider $y  = (y_i) \in \mathbb{R}^n$ such that $\| y \|_2 = \varepsilon$. By the Cauchy-Schwarz inequality, and since $\varphi_i$ are an orthonormal basis,
\[  \Big\|   \sum_{i \le n} y_i  (q \star   \varphi_i)   \Big\|_\infty  =  \Big\|  q \star  \Big( \sum_{i \le n} y_i    \varphi_i \Big)   \Big\|_\infty \le \|q \|_2 \Big\| \sum_{i \le n} y_i \varphi_i \Big\|_2  =    \|y\|_2 = \varepsilon .  \]
Since $ q \star \varphi_i$ is supported on  $\{x : d_\infty(x, S) \le r+s\}$, and recalling that $g_S(\cdot) \ge \id_{d_\infty(\cdot, S) \le r+s}$, this gives
\[   \sup_{y : \|y\|_2 \le \varepsilon}  \, \sum_{i \le n} (Z_i +y_i )( q \star \varphi_i ) - f^n_S  =   \sup_{y : \|y\|_2 \le \varepsilon}  \,   \sum_{i \le n} y_i  (q \star   \varphi_i)   \le  \varepsilon  g_S . \]
Therefore, since $A$ is increasing,
\begin{align*}
    \mathbb{P}[f^n_S +  h_S + \varepsilon  g_S \in A]  - \mathbb{P}[ f^n_S + h_S \in A ]   &\ge    \mathbb{P}[\cup_{y : \|y\|_2 \le \varepsilon}  \{ Z^n + y \in E \} ] - \mathbb{P}[Z^n \in E]  \\
    &  =      \mathbb{P}[Z^n \in E^{+\varepsilon} \setminus E ]  .  
  \end{align*}
Combining with \eqref{e:rus2},
\begin{equation}
\label{e:rusfin2}
    \mathbb{P}[f^n_S +  h_S + \varepsilon g_S \in A]  - \mathbb{P}[ f^n_S + h_S \in A ]  \ge   c_3   \varepsilon   \mathbb{P}[f^n_S + h_S \in A] (1 - \mathbb{P}[f^n_S + h_S \in A])  - c_4 \varepsilon^2   .
    \end{equation}
It remains to prove that almost surely (with respect to $h_S$), as $n \to \infty$,
\begin{equation}
\label{e:cont1}   \mathbb{P}[f^n_S + h_S \in A] \to   \mathbb{P}[f_S + h_S \in A]  \quad \text{and} \quad  \mathbb{P}[f^n_S +  h_S + \varepsilon g_S \in A]  \to  \mathbb{P}[f_S +  h_S + \varepsilon g_S \in A]  , 
\end{equation} 
since then sending $n \to \infty$ in \eqref{e:rusfin2} yields
\[     \mathbb{P}[f_S +  h_S + \varepsilon g_S \in A]  - \mathbb{P}[ f_S + h_S \in A ]   \ge c_3 \varepsilon  \mathbb{P}[f_S + h_S \in A] (1 - \mathbb{P}[f_S + h_S \in A])  - c_4 \varepsilon^2, \]
 which gives \eqref{e:rusfin} after sending $\varepsilon \to 0$.
 
\smallskip 
So let us justify \eqref{e:cont1}. Recall that $A$ is an increasing continuity event; this means that for almost every $f = f_S + h_S$ there exists $\delta > 0$ such that  
\[   \id_{\{f_S + h_S  + s \in A\}}  \qquad \text{and} \qquad   \id_{\{ f_S + h_S + \varepsilon g_S + s \in A \}}  \]
are constant for $s \in (-\delta,\delta)$. Then since $f^n_S \to f_S$ in law with respect to the $C^0(\X^d)$-topology, we have \eqref{e:cont1} (by the Portmanteau lemma for instance).
\end{proof}

\begin{remark}
Note that in the proof of Proposition \ref{p:russo} we did not require that the Borel set $E$ in the $n$-dimensional Gaussian space generated by $Z^n$ be increasing, since Gaussian isoperimetry is valid for arbitrary sets. Hence we do not need $q \star \varphi_i$ to be a positive function, which allows us to lift the requirement that $q \ge 0$ appearing in \cite{mv20}.
\end{remark}

\begin{remark}
\label{r:refine}
The proof shows that the inequality could be strengthened by replacing $\frac{d^-}{d \ell} \mathbb{P}_\ell[A]   =   \frac{d^-}{d \varepsilon} \P_\ell[f   + \varepsilon \in A] $ with $\frac{d^-}{d \varepsilon} \P_\ell[f   + \varepsilon g_{S'} \in A]$ where $g_{\mathcal{S}'}(\cdot) := \id_{d_{\infty}(\cdot, \mathcal{S}' ) \le 2(r+s)} $.
\end{remark}

\subsection{The sharpness of the phase transition for finite-range models}
As an application we prove Theorem \ref{t:expdecay} following closely the approach in \cite{dcrt19a}. For this we only need the special case $s=r$ and $\mathcal{S}' = \mathcal{S}_s$ of Proposition \ref{p:lbderiv}.

\begin{proof}[Proof of Theorem \ref{t:expdecay}]

We prove the result with $\Lambda_{2r}$ in place of $\Lambda_1$, since the result for $\Lambda_1$ follows from the union bound.

\smallskip
For $R \ge 0$ define $g_R(\ell):= \mathbb{P}_\ell[A_1(2r, R)]$ (by convention $g_R(\ell) = 1$ if $R \in [0, 2r]$), and its limit $g_R :=\lim_{R \to \infty} g_R(\ell) = \P_\ell[ \Lambda_{2r} \longleftrightarrow \infty]$. We will first establish the differential inequality 
\begin{equation}
\label{e:dcrt}
 \frac{d^-}{d \ell} g_R(\ell)  \ge   \frac{c_1 g_R(\ell)(1-g_R(\ell))}{ \frac{1}{R} \sum_{i = 0}^{R-1}  g_i(\ell) } 
 \end{equation}
for some $c_1 > 0$, every $R$ sufficiently large, and every $\ell \in \R$. Recall the notation from the beginning of the proof of Lemma \ref{l:algo} and for $R \ge 2r$ consider the following algorithm in $\mathcal{A}_r$ (essentially taken from~\cite{dcrt19a}):
\begin{itemize}
\item Draw a random integer $i$ uniformly in $[2r, R]$, and reveal every box that intersects $\partial \Lambda_i$, as well as all adjacent boxes. 
\item Iterate the following steps:
\begin{itemize}
\item Let $\Wc \subset \mathcal{S}_1$ be the boxes that have been revealed. 
\item Identify the set $\Uc \subseteq \partial^+ (\textrm{int}(\Wc))$ such that, for each $S \in \Uc$, there is a primal path contained in $\textrm{int}(\Wc) \cap \Lambda_R$ between $\partial \Lambda_i$ and the boundary of $S$.
\item If $\Uc$ is empty end the loop. Otherwise reveal the boxes in $\partial^+ \Uc \setminus \Wc$.
\end{itemize}
\item If $\textrm{int}(\Wc)$ contains a primal path between $\Lambda_{2r}$ and $\Lambda_R$ output $1$, otherwise output $0$.
\end{itemize}
This algorithm determines $A_1(2r, R)$ since $\textrm{int}(\Wc)$ eventually contains all the components of $\{f \ge 0\} \cap \Lambda_R$ that intersect $\partial \Lambda_i$, and any primal path between $\Lambda_{2r}$ and $\Lambda_R$ must intersect $\partial \Lambda_i$. To estimate the revealments $\rev(S)$ under $\P_\ell$, note that a box $S$ is revealed if and only if either (i) it intersects, or is adjacent to a box that intersects, $\partial \Lambda_i$, or (ii) there is a primal path in $\Lambda_R$ between $\partial \Lambda_i$ and a box adjacent to $S$. If $d'$ denotes the distance from the centre of $S$ to $\Lambda_i$, this implies the occurrence of (a translation of) the event $A_1(2r, d')$. Averaging on $i \in [2r,R]$, we have 
\[ \rev(S) \le  \frac{1}{R-2r}  \Big(  4 + 2 \sum_{i = 2r}^{R-1} \P_\ell[A_1(2r,i)] \Big) \le  \frac{4}{R-1}  \sum_{i = 0}^{R-2r} g_i(\ell)    \le  \frac{c_2}{R}  \sum_{i = 0}^{R-1} g_i(\ell)  \]
for some $c_2 > 0$ and sufficiently large $R$.  Applying Proposition \ref{p:lbderiv} (with $s=r$ and $\mathcal{S}' = \mathcal{S}_1$, recalling that $A_1(2r, R)$ is a continuity event by Lemma \ref{l:classicalgf}) gives that
\[ \frac{d^-}{d \ell} g_R(\ell)  \ge \frac{ c_3 g_R(\ell) (1-g_R(\ell))}{\max_{S \in \mathcal{S}_1} \textrm{Rev}(S) }   \ge \frac{ c_4 g_R(\ell)(1-g_R(\ell))}{ \frac{1}{R} \sum_{i = 0}^{R-1} g_i(\ell)}  \]
for some $c_3, c_4 > 0$ and sufficiently large $R$, which gives \eqref{e:dcrt}.

\smallskip
We now argue that \eqref{e:dcrt} implies the result. First assume that there exists a $\ell_0 > \ell_c$ such that $g(\ell_0) < 1$ (this is clear if $f$ satisfies \eqref{a:pos2}, since then $\P [ \sup_{x \in \Lambda_{3r}} f(x) < -\ell ]    >  0$ for every $\ell \in \R$, but not in general). Then by monotonicity $1 - g_\ell(R) >   (1 - g(\ell_0))/2$ for all $\ell < \ell_0$ and large $R$. Hence setting $c_5 = c_1 (1-g(\ell_0))/2> 0$ and defining $f_R(\ell) = g_R(\ell)/c_5$ we have
\[   \frac{d^-}{d \ell} f_R(\ell)  \ge   \frac{f_R(\ell)}{ \frac{1}{R} \sum_{i = 0}^{R-1}  f_i(\ell) } . \]
for all $\ell < \ell_0$ and large $R$, and applying \cite[Lemma 3.1]{dcrt19a}\footnote{Although this lemma is stated for differentiable functions, it is easy to check that the proof goes through without differentiability since it only uses $f(b) - f(a) \ge \int_a^b   \frac{d^-}{d x} f(x) dx$.} yields the result. On the other hand, if $g(\ell_0) = 1$ for every $\ell_0 > \ell_c$ then the first statement of the theorem is immediate. To prove the second statement, instead choose a $\ell_0 < \ell_c$ and repeat the above argument. This implies the statement for $\ell < \ell_0$, and taking $\ell_0 \uparrow \ell_c$ gives the claim.
\end{proof}
\begin{remark}
 In the case of continuous $f$ the Russo-type inequality in Proposition \ref{p:russo} was crucial in the proof of Theorem \ref{t:expdecay}, although Proposition \ref{p:russodis} would be sufficient in the discrete case.
\end{remark}

\medskip

\appendix

\section{Orthogonal decomposition of $f_S$}

For completeness we present a classical orthogonal decomposition of the Gaussian field on $\X^d$, $\X \in \{\R, \Z\}$,
\[ f_S(\cdot)  = (q \star W|_S)(\cdot) = \int_{y \in S} q(\cdot-y) dW(y) , \]
where $S \subset \mathbb{T}^d$ is a compact domain, $q \in L^2(\mathbb{T}^d)$, and $W$ is the white noise on $\mathbb{T}^d$. In the case $\X = \R$ we also assume that $f_S$ is continuous.

\begin{proposition}[Orthogonal decomposition of $f_S$]
\label{p:odecom}
Let $(Z_i)_{i \ge 1}$ be a sequence of i.i.d.\ standard normal random variables and let $(\varphi_i)_{i \ge 1}$ be an orthonormal basis of $L^2(S)$. Then, as $n \to \infty$,
\[ f^n_S := \sum_{i \ge 1}^n Z_i (q \star \varphi_i) \Rightarrow f_S  \]
in law with respect to the $C^0(\X^d)$-topology on compact sets. In particular,
\[ f_S(\cdot) \stackrel{d}{=} \frac{Z_1 (q \star \id_S)(\cdot)}{\sqrt{\text{Vol}(S)}} + g(\cdot) \]
where $g$ is a Gaussian field independent of $Z_1$.
\end{proposition}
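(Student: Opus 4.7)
The plan is to identify $f^n_S$ as the partial sums of a Karhunen--Lo\`eve-style expansion of $f_S$ and prove convergence in $C^0$ via the standard finite-dimensional-distributions-plus-tightness route.

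First I would set $Z_i := \int_S \varphi_i(y)\,dW(y)$; by the isometry property of the white-noise integral and the orthonormality of $(\varphi_i)$ in $L^2(S)$, the $Z_i$ are i.i.d.\ standard Gaussian. For each fixed $x \in \R^d$, the function $y \mapsto q(x-y)\id_S(y)$ lies in $L^2(\R^d)$ and admits the expansion
\[ q(x-\cdot)\id_S = \sum_{i\ge 1} \langle q(x-\cdot),\varphi_i\rangle_{L^2(S)}\,\varphi_i = \sum_{i\ge 1} (q\star\varphi_i)(x)\,\varphi_i . \]
Integrating term-by-term against $W$ (an $L^2$-isometry) yields $f_S(x) = \sum_i Z_i (q\star\varphi_i)(x)$ with convergence in $L^2(\mathbb{P})$. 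Applying this jointly at finitely many points and using Parseval for the covariance $\sum_i (q\star\varphi_i)(x_j)(q\star\varphi_i)(x_k)  = \mathbb{E}[f_S(x_j) f_S(x_k)]$ gives convergence of all finite-dimensional marginals of $f^n_S$ to those of $f_S$.

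The second step is to upgrade this to weak convergence in $C^0$ on compact sets $K \subset \R^d$, for which it suffices to prove tightness of $(f^n_S)$ in $C^0(K)$. The key estimate is a Bessel-type domination of intrinsic metrics: for every $x,y \in \R^d$ and $n \ge 1$,
\[ \mathbb{E}[(f^n_S(x) - f^n_S(y))^2] = \sum_{i\le n} \bigl((q\star\varphi_i)(x) - (q\star\varphi_i)(y)\bigr)^2 \le \bigl\|q(x-\cdot)\id_S - q(y-\cdot)\id_S\bigr\|_{L^2}^2 = \mathbb{E}[(f_S(x) - f_S(y))^2]. \]
Since $f_S$ is almost surely continuous by hypothesis, the Fernique--Talagrand majorizing-measure theorem provides a uniform modulus of continuity for $f_S$ on $K$, controlled by the intrinsic metric. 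The bound above transfers this modulus verbatim to each $f^n_S$, and the standard Gaussian equicontinuity criterion then gives tightness of $(f^n_S)$ in $C^0(K)$. Combined with finite-dimensional convergence this gives $f^n_S \Rightarrow f_S$ with respect to the $C^0$-topology.

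For the second statement, choose $\varphi_1 := \id_S/\sqrt{\mathrm{Vol}(S)}$ (a unit vector in $L^2(S)$) and extend to an orthonormal basis of $L^2(S)$. Then the above expansion reads $f_S(\cdot) \stackrel{d}{=} Z_1 (q\star \id_S)(\cdot)/\sqrt{\mathrm{Vol}(S)} + g(\cdot)$ with $g := \sum_{i\ge 2} Z_i (q\star\varphi_i)$, which is continuous by the first part applied to the tail series and independent of $Z_1$ by the independence of $(Z_i)_{i\ge 1}$. The main obstacle is the tightness step: the only real content is converting the qualitative hypothesis that $f_S$ is continuous into the quantitative equicontinuity estimate needed to dominate the partial sums, which is exactly what Fernique--Talagrand provides.
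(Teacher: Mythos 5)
Your proof is correct, but it follows a genuinely different route from the paper. You prove finite-dimensional convergence (via the white-noise isometry and Parseval) and then establish tightness in $C^0(K)$ by dominating the canonical metric of $f^n_S$ by that of $f_S$ through Bessel's inequality, invoking the Fernique--Talagrand majorizing-measure machinery to transfer the modulus of continuity. This works: the Bessel domination extends to all linear combinations of increments, so one may appeal either to monotonicity of the $\gamma_2$ functional (as you do) or, more economically, to the Sudakov--Fernique comparison inequality applied to the increment process $(s,t)\mapsto f^n_S(s)-f^n_S(t)$ on $\{|s-t|\le\delta\}$, combined with Borell--TIS, which avoids the full majorizing-measure theory. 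The paper instead exploits the independence of the summands at the Banach-space level: it constructs $g(x)$ as the a.s.\ pointwise limit via L\'evy's equivalence theorem, and then applies the It\^o--Nisio theorem, which reduces convergence in $C^0(\Omega)$ of a sum of independent Banach-valued random variables to $L^1$-convergence of $\int_\Omega g_n\,d\mu$ for every finite signed measure $\mu$; the latter follows from the pointwise $L^2$-convergence and Dini's theorem. The paper's route is shorter and avoids any quantitative modulus-of-continuity estimate, trading the Gaussian comparison machinery for the It\^o--Nisio theorem specific to sums of independent terms. Both arguments are complete; yours is more classical (fdd plus tightness) and would generalize to non-independent decompositions where It\^o--Nisio is unavailable, at the cost of invoking heavier Gaussian-process tools.
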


\begin{proof}
Consider the case $\X=\R$. Remark that, for each $x \in \mathbb{R}^d$, $f^n_S(x) \Rightarrow f_S(x)$ in law since they are centred Gaussian random variables and
\[  \mathbb{E} \Big[ \Big(\sum_{i \ge 1}^n Z_i (q \star \varphi_i)(x)  \Big)^2 \Big] = \sum^n_{i \ge 1} \Big(\int_S q(x-s) \varphi_i(s) \, dx \Big)^2  \to   \int_S q(x-s)^2  \, dx    = \mathbb{E}[f_S(x)^2]  \]
 by Parseval's identity. Note also that the functions $q \star \varphi_i$ are continuous (as a convolution of $L^2$ functions), and so each $f^n_S$ is continuous. Hence the first statement of the proposition follows by an application of Lemma \ref{l:in} below. For the second statement, set $\varphi_1$ to be constant on $S$. 
 
 The case $\X = \Z$ is similar but simpler; in fact $L^2(S)$ is finite-dimensional in that case, so $f_S^n  \stackrel{d}{=} f_S$ for sufficiently large $n$.
\end{proof}

\begin{lemma}
\label{l:in}
Let $(f_i)_{i \ge 1}$ be a sequence of independent continuous centred Gaussian fields on~$\mathbb{R}^d$ and define $g_n :=\sum^n_{i \ge 1} f_i$. Suppose there exists a continuous Gaussian field~$g$ on $\R^d$ such that, for each $x \in \mathbb{R}^d$, $g_n(x) \Rightarrow g(x)$ in law. Then $g_n \Rightarrow g$ in law with respect to the $C^0$-topology on compact sets.
\end{lemma}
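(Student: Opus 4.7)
The plan is to reduce to a fixed compact $K \subset \R^d$ and show that $g_n|_K \to g|_K$ almost surely in $C(K)$, equipped with the sup-norm; this is a separable Banach space, and convergence on every compact $K$ yields the claimed convergence in the $C^0$-topology on compact sets. To this end I would invoke the It\^o--Nisio theorem, which states that for a series of independent symmetric random variables in a separable Banach space $B$, almost sure convergence is equivalent to the existence of a $B$-valued random variable $S$ such that $\phi(g_n|_K) \Rightarrow \phi(S)$ in distribution for every $\phi \in B^*$. The $f_i|_K$ are independent centred Gaussian (hence symmetric), and by the Riesz representation theorem $C(K)^*$ consists of finite signed Borel measures $\mu$ on $K$, so it suffices to check that $\int g_n\,d\mu \Rightarrow \int g\,d\mu$ in distribution for every such $\mu$, with limit $g|_K$.

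First I would extract convergence of the full covariance structure from the hypothesized pointwise convergence. The centred Gaussian $g_n(x)$ converges in law iff $\Var(g_n(x)) = \sum_{i=1}^n \Var(f_i(x)) \to \Var(g(x))$, so in particular $\sum_{i=1}^\infty \Var(f_i(x)) < \infty$ for every $x$. Two applications of Cauchy--Schwarz (to the $L^2$ inner product and then to the summation) yield
\[ \sum_{i=1}^\infty |\cov(f_i(x), f_i(y))| \le \sum_{i=1}^\infty \sqrt{\Var(f_i(x))\Var(f_i(y))} \le \sqrt{\Var(g(x))\Var(g(y))} , \]
so the series $\cov(g_n(x), g_n(y)) = \sum_{i=1}^n \cov(f_i(x), f_i(y))$ converges absolutely, necessarily to $\cov(g(x), g(y))$.

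Next, for a fixed $\mu \in C(K)^*$, both $\int g_n\,d\mu$ and $\int g\,d\mu$ are centred Gaussian (as $L^2$-limits of linear combinations of point evaluations). Since $g$ is almost surely continuous on the compact set $K$, the variance function $x \mapsto \Var(g(x))$ is continuous on $K$ (by the $L^2$-continuity of $g$, which follows from a.s.\ continuity together with the uniform integrability supplied by Borell--TIS) and hence bounded. This provides the integrable dominator $\sqrt{\Var(g(x))\Var(g(y))}$ for $|\cov(g_n(x), g_n(y))|$ on $(K\times K,\,|\mu|\otimes|\mu|)$, so dominated convergence gives
\[ \Var\Big(\int g_n\, d\mu\Big) = \int\!\!\int \cov(g_n(x), g_n(y))\, d\mu(x)\,d\mu(y) \;\longrightarrow\; \int\!\!\int \cov(g(x), g(y))\, d\mu(x)\,d\mu(y) = \Var\Big(\int g\, d\mu\Big) , \]
which for centred Gaussians is equivalent to convergence in law. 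It\^o--Nisio then yields $g_n|_K \to g|_K$ almost surely in $C(K)$, completing the argument.

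The hard part is precisely this upgrade from pointwise convergence in law to convergence under every continuous linear functional on $C(K)$. The covariance/Cauchy--Schwarz argument does the job, but it crucially uses the continuous limit $g$ to produce an integrable dominator on $K\times K$; without this input one would instead have to establish tightness directly via a Gaussian chaining estimate (Dudley or Fernique) to control the modulus of continuity uniformly in $n$.
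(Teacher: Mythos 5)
Your strategy matches the paper's in its use of the It\^o--Nisio theorem on $C(K)$, but the verification of its hypothesis takes a genuinely different route, and there is a gap in one step. You claim that the absolutely convergent series $\sum_i \cov(f_i(x), f_i(y))$ converges ``necessarily to $\cov(g(x), g(y))$''. This does not follow from the stated hypothesis, which only gives convergence of the one-point marginal laws $g_n(x) \Rightarrow g(x)$, hence only $\Var(g_n(x)) \to \Var(g(x))$; it constrains nothing about the joint law of $(g_n(x), g_n(y))$ for $x \neq y$, so the limit of $\cov(g_n(x), g_n(y))$ need not equal $\cov(g(x), g(y))$. Without this, the dominated-convergence step does not produce $\Var\bigl(\int g_n\,d\mu\bigr) \to \Var\bigl(\int g\,d\mu\bigr)$, and the argument stalls.

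The paper sidesteps this by first replacing $g$ by the a.s.\ pointwise limit $g'(x) := \lim_n g_n(x)$, which exists by L\'evy's equivalence theorem and has the same one-point marginals as the given $g$. For $g'$, independence of $g_n$ and $g'-g_n$ gives the exact identity $\E[(g'(x) - g_n(x))^2] = u(x) - u_n(x)$ with $u_n(x) := \E[g_n(x)^2]$ and $u(x):=\E[g(x)^2]$, and the proof then proceeds via Dini's theorem (monotone pointwise convergence of continuous $u_n \uparrow u$ on compact $\Omega$ is uniform) to show $\int g_n\,d\mu \to \int g'\,d\mu$ in $L^1$ --- never touching off-diagonal covariances at all. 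If you adopt the same device and argue about $g'$ rather than the given $g$, your Cauchy--Schwarz/dominated-convergence computation does go through, since the two-point covariance convergence is then automatic; alternatively one can read the lemma as implicitly assuming convergence of the full covariance function (which is what holds in the application to Proposition A.1, by the bilinear Parseval identity). As written, though, the step is unjustified, and the paper's route via the a.s.\ limit and the diagonal identity is both simpler and avoids the issue entirely. A minor further slip: It\^o--Nisio yields a.s.\ convergence of $g_n|_K$ to \emph{some} $C(K)$-valued random variable with the prescribed law, not to $g|_K$ itself; this is harmless since the lemma's conclusion is only convergence in law.
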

\begin{proof}
We follow the proof of \cite[Theorem 3.1.2]{at07}. Since $g_n(x)$ is a sum of independent random variables converging in law, by Levy's equivalence theorem we may define $g(x)$ as the almost sure limit of $g_n(x)$. Fix a compact set $\Omega \subset \mathbb{R}^d$, and consider $(g_n)_{n \ge 1}$ as elements of the Banach space $C(\Omega)$ of continuous functions on $\Omega$ equipped with the $C_0$-topology. By the It\^{o}-Nisio theorem \cite[Theorem 3.1.3]{at07}, it suffices to show that 
\[  \int_\Omega  g_n \, d \mu \to \int_\Omega g \, d \mu \]
in mean (and so in probability) for every finite signed Borel measure $\mu$ on $\Omega$. Define the continuous functions $u_n(x) := \mathbb{E}[g_n(x)^2]$ and $u(x) := \mathbb{E}[g(x)^2]$. Then
\[ \mathbb{E}\Big[  \Big| \int_\Omega  g  \, d \mu - \int_\Omega  g_n \, d \mu \Big| \Big]  \le \int_\Omega \Big( \mathbb{E}\big[ \big( g(x) - g_n(x) \big)^2 \big] \Big)^{1/2}  \,  |\mu|(dx)  \le \int_\Omega \Big( u(x) - u_n(x) \Big)^{1/2} |\mu|(dx) .  \]
Since $u_n \to u$ monotonically, by Dini's theorem the convergence is uniform on $\Omega$, so we have that $ \mathbb{E}[  | \int_\Omega  g  \, d \mu - \int_\Omega  g_n \, d \mu | ] \to 0$ as required.
\end{proof}

\bigskip
\bibliographystyle{plain}
\bibliography{paper}

\end{document}